\newcommand{\val}{\operatorname{val}}
\newcommand{\Qp}{\mathbf{Q}_p}
\newcommand{\Qpn}{\mathbf{Q}_{p^n}}
\newcommand{\Zp}{\mathbf{Z}_p}
\newcommand{\Fp}{\mathbf{F}_p}
\newcommand{\Fpn}{\mathbf{F}_{p^n}}
\newcommand{\Cp}{\mathbf{C}_p}
\newcommand{\ZZ}{\mathbf{Z}}
\newcommand{\OO}{\mathcal{O}}
\newcommand{\Fpbar}{\overline{\mathbf{F}}_p}
\newcommand{\Qpbar}{\overline{\mathbf{Q}}_p}
\newcommand{\eps}{\varepsilon}
\renewcommand{\phi}{\varphi}
\renewcommand{\projlim}{\varprojlim}
\renewcommand{\geq}{\geqslant}
\renewcommand{\leq}{\leqslant} 
\newcommand{\G}{\operatorname{GL}_2(\Qp)}
\newcommand{\B}{\mathrm{B}}
\newcommand{\K}{\mathrm{K}}
\newcommand{\KK}{\operatorname{GL}_2(\Zp)}
\newcommand{\I}{\mathrm{I}_1}
\newcommand{\Z}{\mathrm{Z}}
\newcommand{\gal}{\mathcal{G}_{\Qp}}
\newcommand{\hal}{\mathcal{H}_{\Qp}}
\newcommand{\galpn}{\mathcal{G}_{\Qpn}}
\newcommand{\inert}{\mathcal{I}_{\Qp}}
\newcommand{\et}{\widetilde{\mathbf{E}}}
\newcommand{\ve}{\operatorname{val}_{\mathbf{E}}}
\newcommand{\ddiese}{\mathrm{D}^{\sharp}}
\newcommand{\dfont}{\mathrm{D}}
\newcommand{\ind}{\operatorname{ind}}
\newcommand{\Sym}{\operatorname{Sym}}
\newcommand{\Id}{\operatorname{Id}}
\newcommand{\res}{\operatorname{res}}
\newcommand{\Mat}{\operatorname{Mat}}
\newcommand{\Gal}{\operatorname{Gal}}
\newcommand{\GL}{\operatorname{GL}}
\newcommand{\smat}[1]{\left( \begin{smallmatrix} #1 \end{smallmatrix} \right)}
\newcommand{\pmat}[1]{\begin{pmatrix} #1 \end{pmatrix}}
\newcommand{\dpar}[1]{(\!( #1 )\!)}
\newcommand{\dcroc}[1]{[\![ #1 ]\!]}
\newcommand{\pscal}[1]{\langle #1 \rangle}
\author{Laurent Berger}
\address{Universit\'e de Lyon \\
UMPA ENS Lyon \\
46 all\'ee d'Italie \\
69007 Lyon \\
France}
\email{laurent.berger@umpa.ens-lyon.fr}
\urladdr{www.umpa.ens-lyon.fr/\~{}lberger/}
\date{November 2008, revised May 2009}
\title{On some modular representations of the 
Borel subgroup of $\operatorname{GL}_2(\mathbf{Q}_p)$}
\subjclass{05C05, 11F33, 11F70, 11F80, 11F85, 20C20, 20G25, 22E35, 22E50}
\keywords{$p$-adic Langlands correspondence, supersingular representations, $(\varphi,\Gamma)$-modules, Galois representations}
\begin{document}

\begin{abstract}
Colmez has given a recipe to associate a smooth modular representation $\Omega(W)$ of the Borel subgroup of $\operatorname{GL}_2(\mathbf{Q}_p)$ to a $\overline{\mathbf{F}}_p$-representation $W$ of $\operatorname{Gal}(\overline{\mathbf{Q}}_p / \mathbf{Q}_p)$ by using Fontaine's theory of $(\varphi,\Gamma)$-modules. We compute $\Omega(W)$ explicitly and we prove that if $W$ is irreducible and $\dim(W)=2$, then $\Omega(W)$ is the restriction to the Borel subgroup of $\operatorname{GL}_2(\mathbf{Q}_p)$ of the supersingular representation associated to $W$ by Breuil's correspondence.
\end{abstract}

\begin{altabstract}
Colmez a donn\'e une recette permettant d'associer une repr\'esentation modulaire $\Omega(W)$ du sous-groupe de Borel de $\operatorname{GL}_2(\mathbf{Q}_p)$ \`a une $\overline{\mathbf{F}}_p$-repr\'esentation $W$ de $\operatorname{Gal}(\overline{\mathbf{Q}}_p / \mathbf{Q}_p)$ en utilisant la th\'eorie des $(\varphi,\Gamma)$-modules de Fontaine. Nous d\'eterminons $\Omega(W)$ explicitement et nous montrons que si $W$ est irr\'eductible et $\dim(W)=2$, alors $\Omega(W)$ est la restriction au sous-groupe de Borel de $\operatorname{GL}_2(\mathbf{Q}_p)$ de la repr\'esentation supersinguli\`ere associ\'ee \`a $W$ par la correspondance de Breuil.
\end{altabstract}

\maketitle

\setcounter{tocdepth}{2}
\tableofcontents

\setlength{\baselineskip}{18pt}

\section*{Introduction}
This article is a contribution to the $p$-adic Langlands correspondence, and more specifically the ``mod $p$'' correspondence first introduced by Breuil in \cite{BR1} which is a bijection between the supersingular representations of $\G$ and the irreducible $2$-dimensional $\Fpbar$-linear representations of $\gal = \operatorname{Gal}(\overline{\mathbf{Q}}_p / \mathbf{Q}_p)$. In \cite{CPG} and \cite{CGL}, Colmez has given a construction of representations of $\G$ associated to certain $p$-adic Galois representations and by specializing and extending his functor to the case of $\Fpbar$-representations, we get a recipe for constructing a smooth representation $\Omega(W)$ of the Borel subgroup $\B=\B_2(\Qp)$ of $\G$ starting from the data of an $\Fpbar$-representation $W$ of $\gal$. In \cite{LB5}, I proved that Colmez' construction was compatible with Breuil's mod $p$ correspondence and as a consequence that Colmez' $\projlim_\psi \ddiese(\cdot)$ functor in characteristic $p$ does give Breuil's correspondence (up to semisimplification if $W$ is reducible). The proof of \cite{LB5} is direct when $W$ is reducible (in which case $\Omega(W)$ is a parabolic induction) but quite indirect when $W$ is absolutely irreducible (in which case $\Omega(W)$ is supersingular) and one purpose of this article is to give a direct proof in the latter case. A byproduct of the computations of \cite{LB5} is the fact that the restriction to the Borel subgroup of a supersingular representation is still irreducible. This intriguing fact has since been reproved and generalized by Pa\v{s}k\={u}nas in \cite{P06} (see also \cite{EM}; another generalization has been worked out by Vign\'eras in \cite{V06}).

In this article, we start by defining some smooth representations of $\B$ and we prove that they are irreducible. After that, we define the representations $\Omega(W)$ using Colmez' functor applied to $W$  and finally, we prove that if $\dim(W) \geq 2$ and $W$ is irreducible, then the $\Omega(W)$ thus constructed coincide with the representations studied in the first chapter and that if $\dim(W)=2$, then they are the restriction to $\B$ of the supersingular representations studied by Barthel and Livn\'e in \cite{BL2,BL} as well as Breuil in \cite{BR1}. 

Let us now give a more precise description of our results. Let $E$ be a finite extension of $\Fp$ which is the field of coefficients of all our representations, and let 
\[ \K = \pmat{\Zp^\times & \Zp \\ 0 & \Zp^\times} = \B \cap \KK \]
and let $\Z \simeq \Qp^\times$ be the center of $\B$. If $\sigma_1$ and $\sigma_2$ are two smooth characters of $\Qp^\times$ then $\sigma=\sigma_1 \otimes \sigma_2 : \smat{a & b \\ 0 & d} \to \sigma_1(a) \sigma_2(d)$ is a smooth character of $\K\Z$ and we consider the compactly induced representation $\ind_{\K\Z}^{\B} \sigma$. Note that the Iwasawa decomposition implies that $\B/\K\Z = \G/\KK\Z$ so that $\ind_{\K\Z}^{\B} \sigma$ can be seen as a space of ``twisted functions'' on the tree of $\G$.

\begin{enonce*}{Theorem A}
If $\Pi$ is a smooth irreducible representation of $\B$ admitting a central character, then there exists $\sigma = \sigma_1 \otimes \sigma_2$ such that $\Pi$ is a quotient of $\ind_{\K\Z}^{\B}\sigma$.
\end{enonce*}

This result (theorem \ref{allquot}) is a direct consequence of the fact that a pro-$p$-group acting on a smooth $E$-representation necessarily admits some nontrivial fixed points. Assume now that $\sigma_1(p)=\sigma_2(p)$ and let $\lambda=\sigma_1(p)=\sigma_2(p)$ and let $\mathbf{1}_\sigma$ be the element of $\ind_{\K\Z}^{\B}\sigma$ supported on $\K\Z$ and given there by $\mathbf{1}_\sigma(kz)=\sigma(kz)$. If $n \geq 2$ and if $1 \leq h \leq p^{n-1}-1$, let $S_n(h,\sigma)$ be the subspace of $\ind_{\K\Z}^{\B}\sigma$ generated by the $\B$-translates of 
\[  (-\lambda^{-1})^n \pmat{1 & 0 \\ 0 & p^n} \mathbf{1}_\sigma + \sum_{j=0}^{p^n-1} \binom{j}{h(p-1)} \pmat{1 & -jp^{-n} \\ 0 & 1}   \mathbf{1}_\sigma \] 
and let $\Pi_n(h,\sigma) = \ind_{\K\Z}^{\B}\sigma / S_n(h,\sigma)$. We say that $h$ is primitive if there is no $d<n$ dividing  $n$ such that $h$ is a multiple of $(p^n-1)/(p^d-1)$ (this condition is equivalent to requiring that if we write $h=e_{n-1} \hdots e_1 e_0$ in base $p$, then the map $i \mapsto e_i$ from $\ZZ/n\ZZ$ to $\{0,\hdots,p-1\}$ has no period strictly smaller than $n$). The main result of \S \ref{sirb} is that the $\Pi_n(h,\sigma)$ are irreducible if $h$ is primitive. In chapter \ref{secpgm}, we turn to Galois representations, Fontaine's $(\phi,\Gamma)$-modules and Colmez' $\Omega(\cdot)$ functor. In particular, we give a careful construction of $\Omega(W)$ and in theorem \ref{nultheta}, we prove that there exists a character $\sigma$ such that $\Omega(W)$ is a smooth irreducible quotient of $\ind_{\K\Z}^{\B}\sigma$ by a subspace which contains $S_n(h,\sigma)$ where $n=\dim(W)$ and $h$ depends on $W$. Let $\omega_n$ be Serre's fundamental character of level $n$. For a primitive $1 \leq h \leq p^n-2$, let $\ind(\omega_n^h)$ be the unique representation of $\gal$ whose determinant is $\omega^h$ (where $\omega=\omega_1$ is the mod $p$ cyclotomic character) and whose restriction to the inertia subgroup $\inert$ of $\gal$ is given by  $\omega_n^h \oplus \omega_n^{ph} \oplus \cdots \oplus \omega_n^{p^{n-1}h}$. Every $n$-dimensional absolutely irreducible $E$-linear representation $W$ of $\gal$ is isomorphic to $\ind(\omega_n^h) \otimes \chi$ for some primitive $1 \leq h \leq p^{n-1}-1$ and some character $\chi$ and our main result is then the following (theorem \ref{shlisom}).

\begin{enonce*}{Theorem B}
If $n \geq 2$ and if $1 \leq h \leq p^{n-1}-1$ is primitive, then 
\[ \Omega(\ind(\omega_n^h) \otimes \chi) \simeq \Pi_n(h,\chi\omega^{h-1} \otimes \chi). \]
\end{enonce*}

After that, we give the connection with Breuil's correspondence. Our main result connecting Colmez' functor with Breuil's correspondence is the following (it is a combination of theorem \ref{shlisom} for $n=2$ and theorem \ref{ssgisphl}).

\begin{enonce*}{Theorem C}
If $1 \leq h \leq p-1$, then we have
\[ \Omega(\ind(\omega_2^h) \otimes \chi) \simeq \Pi_2(h,\omega^{h-1} \chi \otimes \chi) \simeq 
\frac{\ind_{\KK\Z}^{\G} \Sym^{h-1} E^2}{T} \otimes (\chi \circ \det), \]
where the last representation is viewed as a representation of $\B$.
\end{enonce*}

It would have been possible to treat the Galois representations of dimension $1$ in the same way, and therefore to get a proof that Colmez' functor gives Breuil's correspondence for reducible representations of dimension $2$ using the methods of this article so that one recovers the corresponding result of \cite{LB5} without using the stereographic projection of \cite{BL,BL2}. I have chosen not to include this as it does not add anything conceptually, but it is an instructive exercise for the reader. 

Finally, if $h=1$ and $n\geq 2$, we can give a more explicit version of theorem B. We define two $\B$-equivariant operators $T_+$ and $T_-$ on $\ind_{\K\Z}^{\B} \sigma$ by
\[ T_+(\mathbf{1}_\sigma) = \sum_{j=0}^{p-1} \pmat{p & j \\ 0 & 1}\mathbf{1}_\sigma
\quad\text{and}\quad 
T_-(\mathbf{1}_\sigma) = \pmat{1 & 0 \\ 0 & p}\mathbf{1}_\sigma \]
so that the Hecke operator is $T=T_+ + T_-$ and theorem B can be restated as follows. 

\begin{enonce*}{Theorem D}
We have
\[ \Omega(\ind(\omega_n) \otimes \chi) \simeq \frac{\ind_{\K\Z}^{\B} (1 \otimes 1)}{T_- + (-1)^n T_+^{n-1}} \otimes (\chi \circ \det). \]
\end{enonce*}

There may be a correspondence between irreducible $E$-linear representations of dimension $n$ of $\gal$ and certain objects coming from $\GL_n(\Qp)$. I hope that theorem C gives a good place to start looking for this correspondence, along with the ideas of \cite{SV}.

\section{Smooth modular representations of $\B_2(\Qp)$}
\label{smr}

In this chapter, we construct a number of representations of $\B$ and show that they are irreducible by reasoning directly on the tree of $\operatorname{PGL}_2(\Qp)$.

\Subsection{Linear algebra over $\Fp$}
\label{linalg}

The binomial coefficients are defined by the formula $(1+X)^n = \sum_{i \in \ZZ} \binom{n}{i} X^i$ and we think of them as living in $\Fp$. The following result is due to Lucas.

\begin{lemm}\label{lucas}
If $a$ and $b$ are integers and $a=a_s \hdots a_0$ and $b=b_s \hdots b_0$ are their expansions in base $p$, then
\[ \binom{a}{b} = \binom{a_s}{b_s} \cdots \binom{a_0}{b_0}. \]
\end{lemm}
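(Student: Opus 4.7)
The plan is to prove the identity by computing the generating function $(1+X)^a \in \FF_p[X]$ in two different ways and comparing coefficients of $X^b$.

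First, I would recall the ``freshman's dream'' in characteristic $p$: since $\binom{p}{i}$ is divisible by $p$ for $1 \leq i \leq p-1$, we have $(1+X)^p = 1+X^p$ in $\FF_p[X]$, and by iterating, $(1+X)^{p^i} = 1+X^{p^i}$ for every $i \geq 0$. Writing $a = \sum_{i=0}^s a_i p^i$ with $0 \leq a_i \leq p-1$, this yields
\[ (1+X)^a = \prod_{i=0}^s (1+X)^{a_i p^i} = \prod_{i=0}^s \bigl(1+X^{p^i}\bigr)^{a_i} = \prod_{i=0}^s \sum_{c_i=0}^{a_i} \binom{a_i}{c_i} X^{c_i p^i}. \]

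Expanding the product, the coefficient of $X^b$ is a sum of terms $\prod_{i=0}^s \binom{a_i}{c_i}$ over all tuples $(c_0,\ldots,c_s)$ with $0 \leq c_i \leq p-1$ and $\sum c_i p^i = b$. By uniqueness of the base-$p$ expansion of $b$, there is a single such tuple, namely $(b_0,\ldots,b_s)$ (with the convention that $b_i=0$ for $i>\deg_p(b)$, and noting that the binomial $\binom{a_i}{b_i}$ automatically vanishes when $b_i > a_i$, so the formula is consistent regardless of whether $\deg_p(b)$ exceeds $s$). Hence the coefficient of $X^b$ in $(1+X)^a$ equals $\prod_{i=0}^s \binom{a_i}{b_i}$, and comparing with the usual expansion $(1+X)^a = \sum_j \binom{a}{j} X^j$ gives the claimed identity in $\FF_p$.

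There is really no ``hard step'' here; the only thing to watch is the bookkeeping when $b$ has more base-$p$ digits than $a$ (in which case both sides vanish, which is why the statement implicitly extends the digit sequences by zeros to a common length).
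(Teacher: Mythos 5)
Your proof is correct and follows essentially the same route as the paper's: both factor $(1+X)^a$ as $\prod_i (1+X^{p^i})^{a_i}$ via the freshman's dream and read off the coefficient of $X^b$ using uniqueness of the base-$p$ expansion. You simply spell out the bookkeeping more explicitly.
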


\begin{proof}
If we write $(1+X)^a=(1+X)^{a_0} (1+X^p)^{a_1} \cdots (1+X^{p^s})^{a_s}$, then the coefficient of $X^b$ on the left is the coefficient of $X^{b_0}X^{pb_1}\cdots X^{p^s b_s}$ on the right.
\end{proof}

\begin{lemm}\label{prbin}
If $k,\ell \geq 0$ and if
\[ a_{k,\ell} = \sum_{j=0}^{p^n-1} \binom{j}{k}\binom{j}{\ell} \]
then $a_{k,\ell} = 0$ if $k+\ell \leq p^n-2$ and $a_{k,\ell} = (-1)^k$ if $k+\ell = p^n-1$.
\end{lemm}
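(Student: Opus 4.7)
My plan is to realize $a_{k,\ell}$ as a bivariate coefficient of a generating function, sum the resulting geometric series using the mod $p$ Frobenius, and then read off the answer by expanding the quotient in a suitable formal series ring.

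By the binomial theorem, $\binom{j}{k}\binom{j}{\ell}$ is the coefficient of $X^k Y^\ell$ in $(1+X)^j(1+Y)^j$, so
\[ a_{k,\ell} = [X^k Y^\ell] \sum_{j=0}^{p^n-1} \bigl((1+X)(1+Y)\bigr)^j = [X^k Y^\ell]\, \frac{((1+X)(1+Y))^{p^n}-1}{(1+X)(1+Y)-1}. \]
Applying the Frobenius identity $(1+Z)^{p^n} = 1 + Z^{p^n}$ in characteristic $p$ to the numerator, this polynomial (which a priori lies in $\Fp[X,Y]$) simplifies to
\[ f(X,Y) := \frac{X^{p^n} + Y^{p^n} + X^{p^n} Y^{p^n}}{X + Y + XY}. \]

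Next I would expand $f$ as a series in $\Fp\dpar{X}\dcroc{Y}$, which is legitimate because the factorisation $X+Y+XY = X + Y(1+X)$ shows that the denominator is a unit there: geometric summation gives
\[ \frac{1}{X+Y+XY} = \sum_{m\geq 0} (-1)^m \frac{(1+X)^m}{X^{m+1}} Y^m. \]
Since the numerator terms $Y^{p^n}$ and $X^{p^n} Y^{p^n}$ have $Y$-valuation at least $p^n$, only the contribution from $X^{p^n}$ matters for the coefficient of $Y^\ell$ with $\ell < p^n$, and this produces
\[ [Y^\ell] f(X,Y) = (-1)^\ell (1+X)^\ell X^{p^n - \ell - 1}. \]
Extracting the coefficient of $X^k$ now gives $a_{k,\ell} = (-1)^\ell \binom{\ell}{k + \ell + 1 - p^n}$.

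The conclusion is then immediate: if $k+\ell \leq p^n - 2$ the lower index of the binomial is negative so it vanishes, while if $k+\ell = p^n - 1$ the binomial equals $\binom{\ell}{0} = 1$, leaving $a_{k,\ell} = (-1)^\ell$. This agrees with $(-1)^k$ in $\Fp$ because $(-1)^{p^n-1} = 1$ (an even power for odd $p$, and trivially so in $\FF_2$). The only step that really requires care is the choice of ring in which to invert $X+Y+XY$: one cannot work in $\Fp\dcroc{X,Y}$ since the denominator vanishes at the origin, and the move to $\Fp\dpar{X}\dcroc{Y}$ introduces spurious negative powers of $X$ from the $Y^{p^n}$ and $X^{p^n}Y^{p^n}$ terms of the numerator, but those terms contribute only to $Y$-degrees $\geq p^n$ and so never interfere with the bidegrees we need.
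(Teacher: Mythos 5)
Your argument is correct. You and the paper start from the same generating function identity $a_{k,\ell} = [X^k Y^\ell]\, \dfrac{((1+X)(1+Y))^{p^n}-1}{(1+X)(1+Y)-1}$, but the extraction step diverges. The paper observes that, because $(1+X)(1+Y)-1 = X+Y+XY$ and the numerator is $(X+Y+XY)^{p^n}$ by Frobenius, the whole expression is the \emph{polynomial} $(X+Y+XY)^{p^n-1}$; every monomial of this polynomial is a product of $p^n-1$ factors each from $\{X, Y, XY\}$, hence has total degree at least $p^n-1$, and the total-degree-$(p^n-1)$ part is exactly $\sum_{a+b=p^n-1}\binom{p^n-1}{a}X^aY^b = \sum_k (-1)^k X^k Y^{p^n-1-k}$. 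That is a two-line conceptual observation with no series manipulation. You instead keep the quotient as a rational function, expand $1/(X+Y+XY)$ as a geometric series in $\Fp\dpar{X}\dcroc{Y}$, discard the numerator terms $Y^{p^n}$ and $X^{p^n}Y^{p^n}$ on $Y$-valuation grounds, and extract $a_{k,\ell} = (-1)^\ell\binom{\ell}{k+\ell+1-p^n}$ by inspection; this also gives the slightly stronger closed form for all $k,\ell$ in range, at the cost of some care about the choice of coefficient ring (which you do address correctly, including the $(-1)^\ell = (-1)^k$ check in characteristic $p$). Both are sound; the paper's argument is the one I would recommend for economy, while yours makes the ``why does the answer truncate at total degree $p^n-1$'' question more explicit via the $Y$-valuation of the numerator terms.
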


\begin{proof}
The number $a_{k,\ell}$ is the coefficient of $X^kY^\ell$ in the expansion of \[ \sum_{j=0}^{p^n-1} (1+X)^j(1+Y)^j = \frac{(1+X+Y+XY)^{p^n}-1}{(1+X+Y+XY)-1} = (X+Y+XY)^{p^n-1}. \]
Each term of this polynomial is of the form $X^aY^b(XY)^c$ with $a+b+c=p^n-1$ so that there is no term of total degree $\leq p^n-2$ and the terms of total degree $p^n-1$ are those for which $c=0$ and therefore they are the $(-1)^k X^kY^{p^n-1-k}$.
\end{proof}

Let $V_n$ be the vector space of sequences $(x_0,\hdots,x_{p^n-1})$ with $x_i \in E$. The bilinear map $\pscal{\cdot,\cdot} : V_n \times V_n \to E$ given by $\pscal{x,y} = \sum_{j=0}^{p^n-1} x_j y_j$ is a perfect pairing on $V_n$. 

Let $v_{k,n} \in V_n$ be defined by
\[ v_{k,n} = \left( \binom{0}{k},\binom{1}{k},\hdots,\binom{p^n-1}{k}\right), \]
and let $V_{k,n}$ be the subspace of $V_n$ generated by $v_{0,n},\hdots,v_{k-1,n}$. 

\begin{lemm}\label{orthovkn}
For $0 \leq k \leq p^n$, the space $V_{k,n}$ is of dimension $k$ and $V_{k,n}^\perp = V_{p^n-k,n}$.
\end{lemm}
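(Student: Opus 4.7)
The plan is to piece together the lemma from the perfect pairing and the combinatorial computation of Lemma \ref{prbin}, which already did the real work.

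First I would verify the dimension claim by checking that the whole family $v_{0,n}, v_{1,n}, \ldots, v_{p^n-1,n}$ is linearly independent in $V_n$. Arrange these as the rows of a $p^n \times p^n$ matrix, whose $(k,j)$-entry is $\binom{j}{k}$. This matrix is upper triangular, since $\binom{j}{k} = 0$ when $j < k$, with diagonal entries $\binom{k}{k} = 1$, so it is invertible over $E$. In particular, any initial segment $v_{0,n}, \ldots, v_{k-1,n}$ is linearly independent, giving $\dim V_{k,n} = k$ for all $0 \leq k \leq p^n$.

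Next I would establish one inclusion $V_{p^n - k, n} \subseteq V_{k,n}^\perp$. By Lemma \ref{prbin}, $\langle v_{a,n}, v_{b,n} \rangle = a_{a,b} = 0$ whenever $a + b \leq p^n - 2$. For generators $v_{a,n}$ of $V_{k,n}$ (so $a \leq k-1$) and $v_{b,n}$ of $V_{p^n-k,n}$ (so $b \leq p^n-k-1$), we indeed have $a + b \leq p^n - 2$, hence the pairing vanishes on all pairs of generators, which suffices.

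Finally, since $\pscal{\cdot, \cdot}$ is a perfect pairing on $V_n$, the orthogonal satisfies $\dim V_{k,n}^\perp = p^n - \dim V_{k,n} = p^n - k = \dim V_{p^n-k,n}$, and the inclusion above is an equality of spaces of the same dimension. The whole argument is essentially bookkeeping; the only nontrivial ingredient, the identity $a_{k,\ell} = 0$ for $k + \ell \leq p^n - 2$, has already been secured in Lemma \ref{prbin}, so no serious obstacle remains here.
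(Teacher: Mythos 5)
Your proof is correct and follows essentially the same route as the paper's: the upper-triangular observation (first $j$ entries of $v_{j,n}$ vanish and the $(j+1)$-th is $1$) gives linear independence and $\dim V_{k,n}=k$, Lemma \ref{prbin} gives the inclusion $V_{p^n-k,n}\subseteq V_{k,n}^\perp$, and the perfect pairing supplies the dimension count to upgrade this to equality.
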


\begin{proof}
Since the first $j$ components of $v_{j,n}$ are $0$ and the $(j+1)$-th is $1$, the vectors $v_{j,n}$ are linearly independent and $V_{k,n}$ is of dimension $k$. Lemma \ref{prbin} says that $\pscal{v_{j,n},v_{\ell,n}}=0$ if $j + \ell \leq p^n-2$ and this gives us $V_{k,n}^\perp = V_{p^n-k,n}$ by a dimension count.
\end{proof}

In particular, $V_{1,n}$ is the space of constant sequences and $V_{p^n-1,n}$ is the space of zero sum sequences. Note that by lemma \ref{lucas}, we have $\binom{j+p^n}{k}=\binom{j}{k}$ if $0 \leq k \leq p^n-1$ so that we can safely think of the indices of the $x \in V_n$ as belonging to $\ZZ/p^n \ZZ$. Let $\Delta : V_n \to V_n$ be the map defined by $(\Delta x)_j = x_{j-1} - x_j$.

\begin{lemm}\label{del}
If $0 \leq k+\ell \leq p^n$, then $\Delta^k$ gives rise to an exact sequence
\[ 0 \to V_{k,n} \to V_{\ell+k,n} \xrightarrow{\Delta^k}   V_{\ell,n} \to 0, \]
and $\Delta^k(x) \in V_{\ell,n}$ if and only if $x \in V_{\ell+k,n}$. 
\end{lemm}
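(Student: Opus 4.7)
My plan is to compute $\Delta$ explicitly on the basis $v_{0,n},\ldots,v_{m-1,n}$ of $V_{m,n}$, read off the short exact sequence from the resulting triangular structure, and then deduce the last sentence by passing to the adjoint of $\Delta$ for the pairing $\pscal{\cdot,\cdot}$.

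Using Pascal's rule $\binom{j}{k}=\binom{j-1}{k-1}+\binom{j-1}{k}$ directly yields $(\Delta v_{k,n})_j = \binom{j-1}{k}-\binom{j}{k} = -\binom{j-1}{k-1}$. The polynomial Vandermonde identity $\binom{j-1}{k-1}=\sum_{i=0}^{k-1}(-1)^{k-1-i}\binom{j}{i}$ then gives
\[ \Delta v_{k,n} \;=\; \sum_{i=0}^{k-1}(-1)^{k-i}\,v_{i,n} \;=\; -v_{k-1,n}+v_{k-2,n}-\cdots, \]
so in the ordered basis $(v_{i,n})$ of $V_{m,n}$ the operator $\Delta$ is strictly lower triangular with $-1$'s on the subdiagonal. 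This shows at once that $\Delta(V_{m,n})\subseteq V_{m-1,n}$; iterating, $\Delta^k$ carries $V_{k+\ell,n}$ into $V_{\ell,n}$. The triangular shape also gives $\Delta^k v_{i,n}=0$ for $i<k$, so $V_{k,n}\subseteq\ker(\Delta^k|_{V_{k+\ell,n}})$; while for $k\leq i\leq k+\ell-1$ the vector $\Delta^k v_{i,n}$ has a nonzero $v_{i-k,n}$-component (by induction on $k$ using the leading $-v_{i-1,n}$ term), so the $\Delta^k v_{i,n}$ are independent. A dimension count then yields both the equality $\ker(\Delta^k|_{V_{k+\ell,n}})=V_{k,n}$ and the surjectivity onto $V_{\ell,n}$, which is the claimed short exact sequence.

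For the `if and only if' I would appeal to lemma \ref{orthovkn}. The adjoint $T$ of $\Delta$ for $\pscal{\cdot,\cdot}$ is $(Ty)_j=y_{j+1}-y_j$, and the same Pascal computation now gives the cleaner identity $Tv_{k,n}=v_{k-1,n}$ exactly (no lower-order terms), so $T^k$ maps $V_{p^n-\ell,n}$ onto $V_{p^n-\ell-k,n}$ (the hypothesis $k+\ell\leq p^n$ is exactly what is needed for $p^n-\ell-k$ to be a legitimate index). Then for any $x \in V_n$,
\[ \Delta^k(x)\in V_{\ell,n} = V_{p^n-\ell,n}^\perp \iff \pscal{x,T^k y}=0 \text{ for every } y\in V_{p^n-\ell,n} \iff x\in V_{p^n-\ell-k,n}^\perp = V_{\ell+k,n}. \]
The only real point to watch is the precise leading coefficient of $\Delta v_{k,n}$, because it is what guarantees the iterated kernel has exactly the expected dimension; once that is pinned down the rest is formal bookkeeping with lemma \ref{orthovkn}.
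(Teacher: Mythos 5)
Your proof is correct, but it diverges from the paper's in two places worth noting. For the exact sequence, the paper reduces at once to $k=1$: it observes $\ker\Delta=V_{1,n}$, uses the single Pascal identity $\binom{j}{m}-\binom{j-1}{m}=\binom{j-1}{m-1}$ to get $\Delta(V_{\ell+1,n})\subset V_{\ell,n}$, counts dimensions, and then treats general $k$ by an immediate induction; you instead write $\Delta$ out in full on the basis (via the extra Vandermonde expansion of $\binom{j-1}{k-1}$), obtaining a strictly lower-triangular matrix with $-1$'s on the subdiagonal, and read off the sequence for all $k$ at once --- more explicit, slightly more computation, but the same leading-coefficient information drives both arguments. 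The genuine departure is the ``if and only if''. The paper's argument is the quickest possible: if $\Delta(x)\in V_{\ell,n}$, pick $y\in V_{\ell+1,n}$ with $\Delta(y)=\Delta(x)$ (possible by the surjectivity just established), so $x-y\in\ker\Delta\subset V_{\ell+1,n}$, and iterate. You dualize instead, passing to the adjoint $T$ of $\Delta$ for $\pscal{\cdot,\cdot}$, using the clean identity $Tv_{k,n}=v_{k-1,n}$, and invoking lemma \ref{orthovkn} twice. Your route exploits the pleasant self-duality of the $V_{k,n}$ family and has a certain symmetry, at the cost of an extra appeal to the pairing and a second binomial computation, whereas the paper's lifting argument reuses only what the exact sequence already supplies. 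One small point to keep in view in either approach: the indices live in $\ZZ/p^n\ZZ$, so both the formula $(\Delta v_{k,n})_j=-\binom{j-1}{k-1}$ at $j=0$ and your $Tv_{k,n}=v_{k-1,n}$ at $j=p^n-1$ rest on $\binom{p^n-1}{k}+\binom{p^n-1}{k-1}=\binom{p^n}{k}\equiv 0$ for $1\leq k\leq p^n-1$; this is immediate from Lucas and is implicitly used by the paper as well, so it is not a gap, merely a wrap-around check worth being aware of.
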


\begin{proof}
There is nothing to prove if $k=0$ and we now assume that $k=1$. It is clear that $\ker(\Delta)=V_{1,n}$ the space of constant sequences, and the formula
\[ \binom{j}{m} - \binom{j-1}{m} = \binom{j-1}{m-1} \]
implies that $\Delta(V_{\ell+1,n}) \subset V_{\ell,n}$ so that by counting dimensions we see that there is indeed an exact sequence $0 \to V_{1,n} \to V_{\ell+1,n} \xrightarrow{\Delta}   V_{\ell,n} \to 0$. If $\Delta(x) \in V_{\ell,n}$ then this implies that there exists $y \in V_{\ell+1,n}$ such that $\Delta(x)=\Delta(y)$ so that $x \in V_{\ell+1,n} + \ker(\Delta) = V_{\ell+1,n}$. This proves the lemma for $k=1$ and for $k \geq 2$, it follows from a straightforward induction.
\end{proof}

Note that $\Delta$ is nilpotent of rank $p^n$ and therefore the only subspaces of $V_n$ stable under $\Delta$ are the $\ker (\Delta^k) = V_{k,n}$. Since the cyclic shift $(x_j) \mapsto (x_{j-1})$ is equal to $\Id+\Delta$, this also implies that the only subspaces of $V_n$ stable under the cyclic shift are the $V_{k,n}$.

If $a \in \Zp$ then let $\mu_a : V_n \to V_n$ be the map defined by $\mu_a (x)_j = x_{aj}$.

\begin{lemm}\label{mua}
We have $\mu_a(v_{k,n}) - a^k v_{k,n} \in V_{k,n}$ so that if $x \in V_{k+1,n}$ then $\mu_a (x) \in V_{k+1,n}$.
\end{lemm}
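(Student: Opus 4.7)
The plan is to lift the identity to one between integer-valued polynomials in a single variable and then reduce modulo $p$.

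First, by lemma \ref{lucas} we have $\binom{j+p^n}{k}=\binom{j}{k}$ in $E$ whenever $0 \leq k \leq p^n-1$, so the entries of $\mu_a(v_{k,n})$ depend only on $a$ modulo $p^n$ while the scalar $a^k \in E$ depends only on $a$ modulo $p$. After choosing a non-negative integer representative of $a$, I may therefore assume $a \in \ZZ_{\geq 0}$, and for such $a$ the formula reads $\mu_a(v_{k,n})_j = \binom{aj}{k}$ in $E$ (the index $aj$ being treated mod $p^n$, which is harmless by Lucas).

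Next, I would consider the single-variable polynomial
\[ P(x) = \binom{ax}{k} - a^k \binom{x}{k} \in \mathbf{Q}[x]. \]
Both $\binom{ax}{k}$ and $a^k \binom{x}{k}$ are polynomials of degree $k$ in $x$ with the common leading term $a^k x^k / k!$, so $P(x)$ has degree $\leq k-1$. For $j \in \ZZ$ both $\binom{aj}{k}$ and $a^k \binom{j}{k}$ are integers, so $P(j) \in \ZZ$. Since every integer-valued polynomial of degree $\leq k-1$ is a $\ZZ$-linear combination of $\binom{x}{0}, \ldots, \binom{x}{k-1}$, we can write
\[ P(x) = \sum_{m=0}^{k-1} c_m \binom{x}{m}, \qquad c_m \in \ZZ. \]
Specializing $x$ to each $j \in \{0,\ldots,p^n-1\}$ and reducing modulo $p$ yields the desired identity $\mu_a(v_{k,n}) - a^k v_{k,n} = \sum_{m=0}^{k-1} \overline{c}_m v_{m,n} \in V_{k,n}$.

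For the second assertion, writing a general $x \in V_{k+1,n}$ as $x = \sum_{m=0}^k \alpha_m v_{m,n}$ and using the first assertion together with the linearity of $\mu_a$ gives $\mu_a(x) \in \sum_{m=0}^k (a^m v_{m,n} + V_{m,n}) \subset V_{k+1,n}$. The only place one has to be careful is at the polynomial-identity step: one must verify that the two leading terms really cancel (so that the degree genuinely drops from $k$ to $k-1$) and that the difference truly takes integer values, so that the expansion in the binomial basis can be made over $\ZZ$ before the reduction mod $p$. Once these points are in place the rest is formal.
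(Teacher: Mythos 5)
Your proof is correct, and it takes a genuinely different route from the paper's. The paper works entirely in characteristic $p$: it applies Vandermonde's identity $\binom{aj}{k}=\sum_i\binom{aj-a}{k-i}\binom{a}{i}$ to compute $\Delta\circ\mu_a(v_{k,n})$, feeds the result into the inductive hypothesis for $\ell\leq k-1$, and then deduces the case $k$ from Lemma~\ref{del}, which identifies $V_{k,n}$ as the preimage of $V_{k-1,n}$ under $\Delta$. Your argument instead lifts to characteristic $0$: you exhibit $\binom{ax}{k}-a^k\binom{x}{k}$ as an integer-valued polynomial of degree $\leq k-1$, invoke the classical fact (P\'olya) that such polynomials have a $\ZZ$-expansion in the basis $\binom{x}{0},\ldots,\binom{x}{k-1}$, and reduce mod $p$, quietly using the same observation the paper records after Lemma~\ref{orthovkn} (namely $\binom{j+p^n}{k}=\binom{j}{k}$ when $0\leq k\leq p^n-1$, by Lucas) to identify $\binom{aj}{k}$ with $\binom{aj\bmod p^n}{k}$. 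The paper's approach has the advantage of reusing the $\Delta$-machinery that is already in place and needed elsewhere in \S\ref{linalg}; yours has the advantage of making the leading-term cancellation (which is what forces the degree drop) completely transparent and of dispensing with any finite-difference or Vandermonde bookkeeping, at the cost of appealing to the structure theorem for integer-valued polynomials as an external input. Both are sound; your second claim, via linearity of $\mu_a$, is handled in essentially the same way in both proofs.
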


\begin{proof}
We prove both claims by induction, assuming that it is true for $\ell \leq k-1$ (it is immediate if $\ell=0$ or even $\ell=1$). Vandermonde's identity gives us
\[ \binom{aj}{k} = \binom{aj-a}{k} \binom{a}{0} + \binom{aj-a}{k-1} \binom{a}{1} + \cdots + \binom{aj-a}{0} \binom{a}{k}, \]
which shows that $\Delta \circ \mu_a(v_{k,n}) -a^k  v_{k-1,n} \in V_{k-1,n}$ by the induction hypothesis and therefore that $\mu_a(v_{k,n}) - a^k  v_{k,n} \in  V_{k,n}$ by lemma \ref{del} which finishes the induction. 
\end{proof}

\begin{lemm}\label{extrbl}
If $x \in V_{k,n}$ and if $0 \leq i \leq p-1$, then the sequence $y \in V_{n-1}$ given by $y_j=x_{pj+i}$ belongs to $V_{\lfloor (k-1)/p \rfloor+1, n-1}$.
\end{lemm}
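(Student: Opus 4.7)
The plan is to reduce to the case of the basis vectors $v_{m,n}$ and then apply Lucas's identity directly. Since $V_{k,n}$ is spanned by $v_{0,n},\ldots,v_{k-1,n}$ and the map $x \mapsto y$ given by $y_j = x_{pj+i}$ is $E$-linear, it suffices to show that for each $0 \leq m \leq k-1$, the sequence $y^{(m)} \in V_{n-1}$ defined by $y^{(m)}_j = \binom{pj+i}{m}$ lies in $V_{\lfloor (k-1)/p \rfloor + 1,\, n-1}$.

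Write $m = m_0 + p m'$ with $0 \leq m_0 \leq p-1$. The base-$p$ expansion of $pj+i$ is obtained by prepending the digit $i$ to that of $j$ (this uses $0 \leq i \leq p-1$ and $0 \leq j \leq p^{n-1}-1$). Applying Lemma \ref{lucas} digit by digit gives
\[ \binom{pj+i}{m} = \binom{i}{m_0} \binom{j}{m'}, \]
so $y^{(m)} = \binom{i}{m_0} \cdot v_{m',\,n-1}$. In particular $y^{(m)} \in V_{m'+1,\,n-1}$.

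Finally, as $m$ runs from $0$ to $k-1$, the integer $m' = \lfloor m/p \rfloor$ ranges from $0$ to $\lfloor (k-1)/p \rfloor$, so each $y^{(m)}$ lies in $V_{\lfloor (k-1)/p \rfloor + 1,\, n-1}$, and the same holds for any linear combination. This gives the claim.

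The only step requiring care is the application of Lucas: one must check that the base-$p$ digits of $pj+i$ really are $(i, j_0, j_1, \ldots, j_{n-2})$, which is exactly where the hypothesis $0 \leq i \leq p-1$ is used. Beyond that the argument is purely formal, so I do not anticipate a genuine obstacle.
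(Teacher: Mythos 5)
Your proof is correct and takes essentially the same route as the paper: both reduce by linearity to the spanning vectors $v_{\ell,n}$ with $\ell \leq k-1$, write $\ell = p\lfloor \ell/p\rfloor + \ell_0$, and apply Lucas's lemma to obtain $\binom{pj+i}{\ell} = \binom{j}{\lfloor \ell/p\rfloor}\binom{i}{\ell_0}$, from which the containment follows. You have merely spelled out the final bookkeeping ($\lfloor \ell/p\rfloor$ ranging up to $\lfloor (k-1)/p\rfloor$) a bit more explicitly than the paper does.
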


\begin{proof}
If $\ell \leq k-1$ and if we write $\ell =p\lfloor \ell/p \rfloor+\ell_0$ so that $0 \leq \ell_0 \leq p-1$, then by lemma \ref{lucas}, we have
\[ \binom{pj+i}{\ell} = \binom{j}{\lfloor \ell/p \rfloor}\binom{i}{\ell_0}, \]
which implies the lemma.
\end{proof}

\Subsection{The twisted tree}\label{twtr}
We now turn to $\B/\K\Z$ and the smooth representations of $\B$. 
If $\beta \in \Qp$ and $\delta \in \ZZ$, let
\[ g_{\beta,\delta} = \pmat{ 1 & \beta \\ 0 & p^\delta}. \]
Let $A=\{ \alpha_n p^{-n} + \cdots + \alpha_1 p^{-1}$ where $0 \leq \alpha_j \leq p-1 \}$ so that $A$ is a system of representatives of $\Qp/\Zp$.

\begin{lemm}\label{cosets}
We have $\B = \coprod_{\beta \in A, \delta\in \ZZ} g_{\beta,\delta} \cdot \K\Z$.
\end{lemm}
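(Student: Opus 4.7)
The plan is to prove the statement by an explicit matrix calculation: for each $g \in \B$ I would exhibit a unique pair $(\beta,\delta) \in A \times \ZZ$ such that $g_{\beta,\delta}^{-1} g \in \K\Z$. Recall that $\K\Z$ is the set of upper triangular matrices $\smat{x & y \\ 0 & z}$ with $\val(x)=\val(z)$ and $\val(y) \geq \val(z)$, since any element of $\K\Z$ is of the form $z \cdot k$ for $z \in \Qp^\times$ scalar and $k \in \K$.

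For the existence part, given $g = \smat{a & b \\ 0 & d}$, the direct computation
\[ g_{\beta,\delta}^{-1} g = \pmat{1 & -\beta p^{-\delta} \\ 0 & p^{-\delta}} \pmat{a & b \\ 0 & d} = \pmat{a & b - \beta p^{-\delta} d \\ 0 & p^{-\delta} d} \]
shows that the diagonal condition $\val(a)=\val(p^{-\delta}d)$ forces the unique choice $\delta = \val(d)-\val(a) \in \ZZ$. With this $\delta$ fixed, the off-diagonal condition $\val(b-\beta p^{-\delta}d) \geq \val(p^{-\delta}d)$ rewrites as $\beta \equiv b p^{\delta}/d \pmod{\Zp}$, and because $A$ is a system of representatives of $\Qp/\Zp$ there is a unique such $\beta \in A$. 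This gives $\B = \bigcup_{\beta,\delta} g_{\beta,\delta}\K\Z$.

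For disjointness, I would compute
\[ g_{\beta',\delta'}^{-1} g_{\beta,\delta} = \pmat{1 & \beta - \beta' p^{\delta-\delta'} \\ 0 & p^{\delta-\delta'}} \]
and check when it lies in $\K\Z$: the diagonal valuation condition forces $\delta = \delta'$, and then the off-diagonal one forces $\beta - \beta' \in \Zp$, whence $\beta = \beta'$ since $A$ picks out one representative per class. The argument is entirely formal -- essentially just a bookkeeping of valuations together with the defining property of $A$ -- so there is no substantive obstacle; the content is merely the translation of the Iwasawa-type decomposition into an explicit family of $\B/\K\Z$-coset representatives.
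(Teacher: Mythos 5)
Your proof is correct and takes essentially the same approach as the paper: a direct matrix computation together with the defining property of $A$ as a system of representatives of $\Qp/\Zp$. The only cosmetic difference is that you characterize membership in $\K\Z$ via valuation conditions and solve for $(\beta,\delta)$, while the paper exhibits the factorization of $\smat{a&b\\0&d}$ explicitly; both amount to the same bookkeeping.
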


\begin{proof}
If $\smat{a & b \\ 0 & d} \in \B$, then with obvious notations we have
\[ \pmat{a & b \\ 0 & d} = \pmat{a_0 p^\alpha & b \\ 0 & d_0 p^\delta} = \pmat{1 & b p^{-\alpha} d_0^{-1} - c  \\ 0 & p^{\delta-\alpha}} \pmat{a_0  & cd_0 \\ 0 & d_0 } \pmat{p^\alpha & 0 \\ 0 & p^\alpha} \]
which tells us that $\B = \cup_{\beta \in A, \delta\in \ZZ} g_{\beta,\delta} \cdot \K\Z$ since we can always choose $c \in \Zp$ such that $bp^{-\alpha}d_0^{-1}-c \in A$. The fact that the union is disjoint is immediate.
\end{proof}

The vertices of the tree of $\G$ can then be labelled by the $\delta \in \ZZ$ and the $\beta \in A$.

\begin{center}
\begin{picture}(300,200)
\put(0,0){\framebox(300,200){}}
\put(5,50){ht $\delta$}
\put(5,100){ht $\delta+1$}
\put(5,150){ht $\delta+2$}

\dottedline{2}(60,50)(240,50)
\dottedline{2}(60,100)(240,100)
\dottedline{2}(60,150)(240,150)

\put(70,50){\line(2,5){20}}
\put(90,50){\line(0,5){50}}
\put(110,50){\line(-2,5){20}}

\put(130,50){\line(2,5){20}}
\put(150,50){\line(0,5){50}}
\put(170,50){\line(-2,5){20}}

\put(190,50){\line(2,5){20}}
\put(210,50){\line(0,5){50}}
\put(230,50){\line(-2,5){20}}

\put(90,100){\line(6,5){60}}
\put(150,100){\line(0,5){50}}
\put(210,100){\line(-6,5){60}}

\put(150,170){$\vdots$}

\put(45,32){$\beta$}
\put(60,35){\circle*{3}}
\put(60,35){\vector(1,0){180}}

\put(110,10){Part of the tree}
\end{picture}
\end{center}

If $\sigma_1$ and $\sigma_2$ are two smooth characters $\sigma_i : \Qp^\times \to E^\times$, then let $\sigma = \sigma_1 \otimes \sigma_2 : \K\Z \to E^\times$ be the character $\sigma : \smat{a & b \\ 0 & d} \mapsto \sigma_1(a)\sigma_2(d)$ and let $\ind_{\K\Z}^{\B}\sigma$ be the set of functions $f : \B \to E$ satisfying $f(kg) = \sigma(k)f(g)$ if $k \in \K\Z$ and such that $f$ has compact support modulo $\Z$. If $g \in \B$, denote by $[g]$ the function $[g] : \B \to E$ defined by $[g](h)=\sigma(hg)$ if $h \in \K\Z g^{-1}$ and $[g](h)=0$ otherwise. Every element of $\ind_{\K\Z}^{\B}\sigma$ is a finite linear combination of some functions $[g]$. We make $\ind_{\K\Z}^{\B}\sigma$ into a representation of $\B$ in the usual way: if $g \in \B$, then $(gf)(h)=f(hg)$. In particular, we have $g [h] = [gh]$ in addition to the formula $[gk]=\sigma(k)[g]$ for $k\in\K\Z$.

\begin{lemm}\label{intert}
If $\chi$ is a smooth character of $\Qp^\times$ then the map $[g] \mapsto (\chi \circ \det)(g)^{-1} [g]$ extends to a $\B$-equivariant isomorphism from $(\ind_{\K\Z}^{\B}\sigma) \otimes (\chi \circ \det)$ to $\ind_{\K\Z}^{\B}(\sigma_1 \chi \otimes \sigma_2 \chi)$.
\end{lemm}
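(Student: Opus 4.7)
The plan is to define the map on the set of generators $\{[g] : g \in \B\}$ by the stated formula, then verify three things in sequence: well-definedness of the linear extension, $\B$-equivariance with the twisted action, and bijectivity.

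For well-definedness, the key relation to preserve is $[gk] = \sigma(k)[g]$ for $k \in \K\Z$. On the target side, the same symbol $[g]$ satisfies $[gk] = (\sigma_1\chi \otimes \sigma_2\chi)(k)[g]$. Writing $k = \smat{a & b \\ 0 & d}$ one checks immediately that $(\sigma_1\chi \otimes \sigma_2\chi)(k) = \sigma(k) \cdot (\chi \circ \det)(k)$, and then the factor $(\chi \circ \det)(gk)^{-1} = (\chi \circ \det)(g)^{-1}(\chi \circ \det)(k)^{-1}$ in front of the target $[gk]$ cancels exactly the extra $(\chi \circ \det)(k)$, leaving $\sigma(k)(\chi \circ \det)(g)^{-1}[g]$ as required. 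This is the only calculation in the proof.

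For $\B$-equivariance, one has to remember that on $(\ind_{\K\Z}^{\B}\sigma) \otimes (\chi \circ \det)$ the action of $h \in \B$ is $h \star [g] = (\chi \circ \det)(h)[hg]$, while on $\ind_{\K\Z}^{\B}(\sigma_1\chi \otimes \sigma_2\chi)$ it is the standard $h \cdot [g] = [hg]$. Applying the map to $h \star [g]$ yields $(\chi \circ \det)(h)(\chi \circ \det)(hg)^{-1}[hg] = (\chi \circ \det)(g)^{-1}[hg]$, which matches $h \cdot \bigl( (\chi \circ \det)(g)^{-1}[g]\bigr)$.

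Bijectivity is immediate, because the same construction with $\chi$ replaced by $\chi^{-1}$ produces the inverse map $[g] \mapsto (\chi \circ \det)(g)[g]$ from $\ind_{\K\Z}^{\B}(\sigma_1\chi \otimes \sigma_2\chi)$ back to $(\ind_{\K\Z}^{\B}\sigma) \otimes (\chi \circ \det)$. There is no real obstacle in this lemma; the only step that requires any care is keeping track of which $[g]$ lives in which space and verifying that the $(\chi \circ \det)(k)$ factor on $\K\Z$ reconciles the two versions of the induced character.
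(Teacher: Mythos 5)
Your proof is correct and takes essentially the same route as the paper: the paper's one-line proof is precisely your equivariance computation (matching $(\chi\circ\det)(h)(\chi\circ\det)(hg)^{-1} = (\chi\circ\det)(g)^{-1}$), and you merely spell out in addition the well-definedness check on the relation $[gk]=\sigma(k)[g]$ and note that $\chi\mapsto\chi^{-1}$ gives the inverse, both of which the paper leaves implicit.
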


\begin{proof}
Let us write $[\cdot]_\sigma$ and $[\cdot]_{\sigma\chi}$ for the two functions $[\cdot]$ in the two induced representations. We then have $h [g]_\sigma = (\chi \circ \det)(h) [hg]_{\sigma}$ and 
\[ (\chi \circ \det)(g)^{-1} h [g]_{\sigma\chi} = (\chi \circ \det)(h) (\chi \circ \det)(hg)^{-1} [hg]_{\sigma\chi} \] 
so that the above map is indeed $\B$-equivariant.
\end{proof}

Each $f \in \ind_{\K\Z}^{\B}\sigma$ can be written in a unique way as $f = \sum_{\beta,\delta} \alpha(\beta,\delta) [g_{\beta,\delta}]$. The formula
\[ \pmat{ 1 & \beta+\lambda \\ 0 & p^\delta} = \pmat{ 1 & \beta \\ 0 & p^\delta} \pmat{ 1 & \lambda \\ 0 & 1} \]
and the fact that $\sigma$ is trivial on $\smat{ 1 & \Zp \\ 0 & 1}$ imply that we can extend the definition of $\alpha(\beta,\delta)$ to all $\beta \in \Qp$. We then have the formula $\alpha(\beta,\delta)\left(\smat{1 & \lambda \\ 0 & 1}f\right) = \alpha(\beta-\lambda p^\delta,\delta)(f)$ if $\lambda \in \Qp$.

The \emph{support} of $f$ is the set of $g_{\beta,\delta}$ such that $\alpha(\beta,\delta) \neq 0$. Let us say that the \emph{height} of an element $g_{\beta,\delta}$ is $\delta$. We say that $f \in \ind_{\K\Z}^{\B}\sigma$ has support in \emph{levels} $n_1,\hdots,n_k$ if all the elements of its support are of height $n_i$ for some $i$. If $f \in \ind_{\K\Z}^{\B}\sigma$, then we can either raise or lower the support of $f$ using the formula $\smat{1 & 0 \\ 0 & p^{\pm 1}} g_{\beta,\delta} = g_{\beta,\delta \pm 1}$. 

If $n \geq 0$ let us say that an \emph{$n$-block} is the set of $g_{\beta - j p^{-n},\delta}$ for $j=0,\hdots,p^n-1$ and that the \emph{initial} $n$-block is the one for which $\beta=0$. We use the same name for the vector of coefficients $\alpha(\beta - j p^{-n},\delta)$ for $j=0,\hdots,p^n-1$ so that an $n$-block is then an element of $V_n$ from \S \ref{linalg}.

\begin{center}
\begin{picture}(300,160)(0,20)
\put(0,20){\framebox(300,160){}}
\put(5,50){ht $\delta$}
\put(5,100){ht $\delta+1$}
\put(5,150){ht $\delta+2$}

\dottedline{2}(60,50)(240,50)
\dottedline{2}(60,100)(240,100)
\dottedline{2}(60,150)(240,150)

\put(70,50){\line(2,5){20}}
\put(90,50){\line(0,5){50}}
\put(110,50){\line(-2,5){20}}

\put(130,50){\line(2,5){20}}
\put(150,50){\line(0,5){50}}
\put(170,50){\line(-2,5){20}}

\put(190,50){\line(2,5){20}}
\put(210,50){\line(0,5){50}}
\put(230,50){\line(-2,5){20}}

\put(90,100){\line(6,5){60}}
\put(150,100){\line(0,5){50}}
\put(210,100){\line(-6,5){60}}

\put(190,50){\circle*{5}}
\put(210,50){\circle*{5}}
\put(230,50){\circle*{5}}

\put(185,45){\framebox(50,10){}}
\put(190,30){$1$-block}

\end{picture}
\end{center}

\vspace{\baselineskip}

\begin{center}
\begin{picture}(300,160)
\put(0,20){\framebox(300,160){}}
\put(5,50){ht $\delta$}
\put(5,100){ht $\delta+1$}
\put(5,150){ht $\delta+2$}

\dottedline{2}(60,50)(240,50)
\dottedline{2}(60,100)(240,100)
\dottedline{2}(60,150)(240,150)

\put(70,50){\line(2,5){20}}
\put(90,50){\line(0,5){50}}
\put(110,50){\line(-2,5){20}}

\put(130,50){\line(2,5){20}}
\put(150,50){\line(0,5){50}}
\put(170,50){\line(-2,5){20}}

\put(190,50){\line(2,5){20}}
\put(210,50){\line(0,5){50}}
\put(230,50){\line(-2,5){20}}

\put(90,100){\line(6,5){60}}
\put(150,100){\line(0,5){50}}
\put(210,100){\line(-6,5){60}}

\put(190,50){\circle*{5}}
\put(210,50){\circle*{5}}
\put(230,50){\circle*{5}}

\put(130,50){\circle*{5}}
\put(150,50){\circle*{5}}
\put(170,50){\circle*{5}}

\put(70,50){\circle*{5}}
\put(90,50){\circle*{5}}
\put(110,50){\circle*{5}}

\put(65,45){\framebox(170,10){}}
\put(130,30){$2$-block}

\end{picture}
\end{center}

In the following paragraph, we study some irreducible quotients of $\ind_{\K\Z}^{\B}\sigma$ of arithmetic interest but before we do that, it is worthwhile to point out that all smooth irreducible representations of $\B$ admitting a central character are a quotient of some $\ind_{\K\Z}^{\B}\sigma$.

\begin{theo}\label{allquot}
If $\Pi$ is a smooth irreducible representation of $\B$ admitting a central character, then there exists $\sigma = \sigma_1 \otimes \sigma_2$ such that $\Pi$ is a quotient of $\ind_{\K\Z}^{\B}\sigma$.
\end{theo}

\begin{proof}
The group $\I$ defined by
\[ \I = \pmat{1+p\Zp & \Zp \\ 0 & 1+p\Zp} \]
is a pro-$p$-group and hence $\Pi^{\I} \neq 0$. Furthermore, $\I$ is a normal subgroup of $\K$ so that $\Pi^{\I}$ is a representation of $\K/\I = \Fp^\times \times \Fp^\times$. Since this group is a finite group of order prime to $p$, we have $\Pi^{\I} = \oplus_{\eta} \Pi^{\K=\eta}$ where $\eta$ runs over the characters of $\Fp^\times \times \Fp^\times$ and since $\Z$ acts through a character by hypothesis, there exists a character $\sigma = \sigma_1 \otimes \sigma_2$ of $\K\Z$ and $v \in \Pi$ such that $k \cdot v = \sigma(k)v$ for $k\in\K\Z$. By Frobenius reciprocity, we get a nontrivial map $\ind_{\K\Z}^{\B}\sigma \to \Pi$ and this map is surjective since $\Pi$ is irreducible.
\end{proof}

Note that $\sigma$ is not uniquely determined by $\Pi$: there are  nontrivial intertwinings between some quotients of $\ind_{\K\Z}^{\B}\sigma$ for different $\sigma$. 

We finish this paragraph with a useful general lemma. Let $\tau_k = \smat{1 & -1/p^k \\ 0 & 1}$ and let $\Pi$ be any representation of $\B$.

\begin{lemm}\label{existfix}
If $v \neq 0 \in \Pi^{\smat{1 & \Zp \\ 0 & 1}}$ and if $k \geq 0$ then one of the $p^k$ elements
\[ v_\ell = \sum_{j=0}^{p^k-1} \binom{j}{\ell} \tau_k^j (v), \quad 0 \leq \ell \leq p^k-1 \]
is nonzero and fixed by $\tau_k$.
\end{lemm}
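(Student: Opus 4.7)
The plan is to encode the whole situation in the linear algebra of $V_k$ developed in \S\ref{linalg}. Since $v$ is fixed by $\smat{1 & \Zp \\ 0 & 1}$ and $\tau_k^{p^k} = \smat{1 & -1 \\ 0 & 1}$, we have $\tau_k^{p^k} v = v$, so $\tau_k$ acts with period dividing $p^k$ on $v$. I would therefore introduce the linear map
\[ \varphi : V_k \to \Pi, \qquad (x_0, \ldots, x_{p^k-1}) \mapsto \sum_{j=0}^{p^k-1} x_j \tau_k^j v, \]
so that $\varphi(v_{\ell,k}) = v_\ell$ by construction. A direct reindexing $i \equiv j+1 \pmod{p^k}$ shows that $\tau_k \circ \varphi = \varphi \circ (\Id + \Delta)$, where $\Id + \Delta$ is the cyclic shift on $V_k$; equivalently $\varphi \circ \Delta = (\tau_k - \Id) \circ \varphi$.

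With this setup, $\ker \varphi$ is a shift-stable (equivalently $\Delta$-stable) subspace of $V_k$, hence by the remark following Lemma \ref{del} it is equal to $V_{m,k}$ for a unique $0 \le m \le p^k$. The hypothesis $v \ne 0$ gives $\varphi \ne 0$, so $m < p^k$. I claim that $\ell = m$ works. For nonvanishing: the $v_{j,k}$ are linearly independent by Lemma \ref{orthovkn}, and by definition $v_{0,k}, \ldots, v_{m-1,k}$ span $V_{m,k} = \ker \varphi$, so $v_\ell = \varphi(v_{\ell,k})$ vanishes if and only if $\ell < m$; in particular $v_m \ne 0$. For $\tau_k$-invariance: since $v_{m,k} \in V_{m+1,k}$, Lemma \ref{del} yields $\Delta v_{m,k} \in V_{m,k} = \ker \varphi$, and therefore $(\tau_k - \Id) v_m = \varphi(\Delta v_{m,k}) = 0$.

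The only real difficulty is getting nonvanishing and $\tau_k$-invariance at the \emph{same} index $\ell$; once one recognises that the $v_\ell$ are the images of the canonical generators $v_{\ell,k}$ of $V_k$ under a shift-equivariant map, the classification of shift-stable subspaces and the shift of filtration degree provided by $\Delta$ collapse both requirements into the single equation $\ker \varphi = V_{m,k}$, and no further combinatorics with binomial coefficients is needed beyond what is already packaged in Lemmas \ref{lucas}--\ref{del}.
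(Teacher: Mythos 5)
Your proof is correct, and your index $m$ is exactly the same as the smallest $\ell$ with $v_\ell\neq 0$ appearing in the paper's argument (indeed $v_\ell=\varphi(v_{\ell,k})=0$ iff $\ell<m$). The route is slightly different in emphasis: the paper argues directly, using Lemma~\ref{orthovkn} to see that not all $v_\ell$ can vanish, taking the smallest $\ell$ with $v_\ell\neq 0$, and verifying $(\tau_k-\Id)v_\ell=0$ by redoing the small binomial manipulation (Pascal's identity, the same step that underlies Lemma~\ref{del}). You instead promote the map $\varphi:V_k\to\Pi$, $x\mapsto\sum_j x_j\tau_k^j v$ to centre stage, observe that it intertwines $\Delta$ with $\tau_k-\Id$, and then invoke the classification of $\Delta$-stable subspaces of $V_k$ (the remark after Lemma~\ref{del}) to get $\ker\varphi=V_{m,k}$ at a stroke; nonvanishing of $v_m$ then comes from linear independence (Lemma~\ref{orthovkn}) and fixedness from $\Delta V_{m+1,k}\subset V_{m,k}$ (Lemma~\ref{del}). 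What your version buys is that both requirements---nonvanishing and $\tau_k$-invariance---fall out of a single equation $\ker\varphi=V_{m,k}$, at the cost of invoking the structural classification of $\Delta$-stable subspaces rather than a one-line computation; the underlying combinatorics is the same.
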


\begin{proof}
If all $p^k$ elements above were zero then lemma \ref{orthovkn} would imply that for any sequence $x = (x_j) \in V_k$ we would have $\sum_{j=0}^{p^k-1} x_j \tau_k^j  (v)= 0$ and with $x=(1,0,\hdots,0)$, we get $v=0$. Let $\ell$ be the smallest integer such that $v_\ell \neq 0$. If $\ell=0$ then $\tau_k(v_0)-v_0 = 0$ since $\tau_k^{p^k} = \tau_0 \in \smat{1 & \Zp \\ 0 & 1}$  and otherwise $\tau_k (v_\ell) - v_\ell = - v_{\ell-1} =0$.
\end{proof}

\Subsection{Some irreducible representations of $\B_2(\Qp)$}
\label{sirb}

If $n \geq 1$ and $0 \leq \ell \leq p^n-1$, let $w_{\ell,n} \in \ind_{\K\Z}^{\B}\sigma$ be the element 
\[ w_{\ell,n} = \sum_{j=0}^{p^n-1} \binom{j}{\ell} \left[\pmat{1 & -jp^{-n} \\ 0 & 1} \right], \]
so that the initial $n$-block of $w_{\ell,n}$ is $v_{\ell,n}$.

\begin{defi}\label{shl}
If $n \geq 2$ and if $1 \leq h \leq p^{n-1}-1$ and if $\sigma = \sigma_1 \otimes \sigma_2$ is a character of $\K\Z$ such that $\sigma_1(p)=\sigma_2(p)$, let $\lambda = \sigma_1(p)=\sigma_2(p)$ and let $S_n(h,\sigma)$ be the subspace of $\ind_{\K\Z}^{\B}\sigma$ generated by the translates under the action of $\B$ of $(-\lambda^{-1})^n \left[\smat{1 & 0 \\ 0 & p^n}\right] + w_{h(p-1),n}$.
\end{defi}

\begin{center}
\begin{picture}(300,165)(0,10)

\put(0,10){\framebox(300,165){}}
\put(10,50){ht $0$}
\put(10,100){ht $1$}
\put(10,150){ht $2$}

\dottedline{2}(60,50)(240,50)
\dottedline{2}(60,100)(240,100)
\dottedline{2}(60,150)(240,150)

\put(70,50){\line(2,5){20}}
\put(90,50){\line(0,5){50}}
\put(110,50){\line(-2,5){20}}

\put(130,50){\line(2,5){20}}
\put(150,50){\line(0,5){50}}
\put(170,50){\line(-2,5){20}}

\put(190,50){\line(2,5){20}}
\put(210,50){\line(0,5){50}}
\put(230,50){\line(-2,5){20}}

\put(90,100){\line(6,5){60}}
\put(150,100){\line(0,5){50}}
\put(210,100){\line(-6,5){60}}

\put(150,150){\circle*{5}}
\put(190,50){\circle*{5}}
\put(210,50){\circle*{5}}
\put(230,50){\circle*{5}}

\put(155,155){$\lambda^{-2}$}
\put(190,35){1}
\put(210,35){1}
\put(230,35){1}

\put(110,20){$\lambda^{-2} [\smat{1 & 0 \\ 0 & p^2}] + w_{p-1,2}$}

\end{picture}
\end{center}

The representations we are interested in are the quotients $\Pi_n(h,\sigma) = \ind_{\K\Z}^{\B} \sigma / S_n(h,\sigma)$ and the main result of this chapter is that they are irreducible if $h$ is primitive. Before we can prove this, we need a number of technical results. 

If $f \in  \ind_{\K\Z}^{\B} \sigma$ and if $0 \leq i \leq n-1$, let 
\[ f_i = \sum_{\substack{\beta \in A \\ \delta \equiv i \bmod{n}}} 
\alpha(\beta,\delta) [g_{\beta,\delta}] \] 
so that $f=f_0+f_1+\cdots+f_{n-1}$.

\begin{lemm}\label{evenodd}
If $f \in  \ind_{\K\Z}^{\B} \sigma$ then $f \in S_n(h,\sigma)$ if and only if $f_i \in S_n(h,\sigma)$ for all $0 \leq i \leq n-1$.
\end{lemm}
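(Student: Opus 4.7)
The $\Leftarrow$ direction is immediate since $f=f_0+\cdots+f_{n-1}$; all the content is in the forward direction. The plan is to show that $S_n(h,\sigma)$ respects the $\ZZ/n\ZZ$-grading of $\ind_{\K\Z}^{\B}\sigma$ by height modulo $n$, i.e.\ the direct sum decomposition
\[ \ind_{\K\Z}^{\B}\sigma \;=\; \bigoplus_{i=0}^{n-1} \bigl(\ind_{\K\Z}^{\B}\sigma\bigr)_i, \qquad \bigl(\ind_{\K\Z}^{\B}\sigma\bigr)_i \;=\; \bigoplus_{\substack{\beta \in A \\ \delta \equiv i \bmod n}} E \cdot [g_{\beta,\delta}]. \]
Once we know that $S_n(h,\sigma) = \bigoplus_i \bigl(S_n(h,\sigma) \cap (\ind_{\K\Z}^{\B}\sigma)_i\bigr)$, the projector $f \mapsto f_i$ automatically preserves $S_n(h,\sigma)$ and the lemma follows.

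The key structural observation is that the distinguished generator
\[ g \;:=\; (-\lambda^{-1})^n \bigl[\smat{1 & 0 \\ 0 & p^n}\bigr] \,+\, w_{h(p-1),n} \]
of $S_n(h,\sigma)$ is already \emph{monochromatic} for this grading: its support is concentrated at heights $0$ and $n$, both $\equiv 0 \bmod n$. The next step is to show that every $\B$-translate of $g$ remains monochromatic. Writing an arbitrary $b \in \B$ as $b = g_{\beta,\delta}\cdot kz$ via Lemma \ref{cosets}, this reduces to two elementary facts: (i) the action of $\K\Z$ on any $[g_{\beta',\delta'}]$ produces a scalar multiple of some $[g_{\beta'',\delta'}]$ at the same height; and (ii) left multiplication by $g_{\beta,\delta}$ shifts heights uniformly by $\delta$, since $g_{\beta,\delta} g_{\beta',\delta'} \in g_{\beta'',\delta+\delta'}\cdot \K\Z$. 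Both verifications are the same kind of coset normalization as in the proof of Lemma \ref{cosets}.

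Combining (i) and (ii), every $\B$-translate $b g$ has its support concentrated at the two heights $\delta$ and $\delta+n$, lying in the single residue class $\delta \bmod n$. Hence the $E$-linear span $S_n(h,\sigma)$ of these translates respects the grading, giving the desired decomposition, and $f \in S_n(h,\sigma)$ forces $f_i \in S_n(h,\sigma)$ for every $i$. There is no genuine obstacle here: the entire argument is bookkeeping about how $\B$ acts on heights, resting solely on the structural fact that the chosen generator of $S_n(h,\sigma)$ already lives in one residue class modulo $n$.
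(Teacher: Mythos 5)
Your argument is correct and is exactly the paper's: the paper's one-line proof ("$S_n(h,\sigma)$ is generated by elements which have their supports in levels equal modulo $n$") is the same observation you spell out, namely that the generator has support at heights $0$ and $n$ and that $\B$ acts on heights by a uniform shift, so the grading by height mod $n$ is respected. Your version just makes the bookkeeping explicit.
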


\begin{proof}
We need only to check that if $f \in  S_n(h,\sigma)$ then $f_i \in S_n(h,\sigma)$ and this follows from the fact that $S_n(h,\sigma)$ is generated by elements which have their supports in levels equal modulo $n$. 
\end{proof}

Let $i_{n-1} \hdots i_1 i_0$ be the expansion of $h(p-1)$ in base $p$. Note that $h \leq p^{n-1}-1$ implies that $i_{n-1} \leq p-2$. Let $h_k = i_{n-k}  + p i_{n-k+1} + \cdots + p^{k-1} i_{n-1}$ so that $h_k = ph_{k-1} + i_{n-k}$ and $h_0=0$ and $h_n=h(p-1)$. Recall that the vectors $v_{k,n}$ were defined in \S \ref{linalg} and let $\B^+ = \coprod_{\beta \in A, \delta \geq 0} g_{\beta,\delta} \K\Z$.

\begin{lemm}\label{oneshtb}
If the support of $g \in S_n(h,\sigma)$ is in levels $\geq 0$, then
\begin{enumerate}
\item $g$ is a linear combination of $\B^+$-translates of $(-\lambda^{-1})^n \left[\smat{1 & 0 \\ 0 & p^n}\right] + w_{h(p-1),n}$
\item if $1 \leq k \leq n$, then the $k$-blocks of level $0$ of $g$ are in $V_{h_k+1,k}$.
\end{enumerate}
\end{lemm}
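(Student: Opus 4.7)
The plan is to work with expressions $g=\sum_i c_i\,g_iX$ where $g_i=g_{\beta_i,\delta_i}k_i$ with $\beta_i\in A$, $\delta_i\in\ZZ$, $k_i=\smat{a_i & b_i \\ 0 & d_i}\in\K\Z$. Each $g_iX$ places a coefficient $c_i\sigma(k_i)(-\lambda^{-1})^n$ on a top vertex $T_i$ at level $\delta_i+n$ (where $T_i$ depends only on $\beta_i \bmod p^{-n}\Zp$) together with a scaled cyclic shift of $\mu_{\tilde{u}_i^{-1}}(v_{h(p-1),n})$ on the $n$-block at level $\delta_i$ under $T_i$, with $\tilde{u}_i = a_i/d_i \bmod p^n$. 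Two terms have the same $n$-block at their base level iff they share their top vertex.

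For Part (1), I induct on $d:=\min_i\delta_i$. The case $d\geq 0$ is exactly the claim. If $d<0$, the support-$\geq 0$ hypothesis gives that the level-$d$ component of $g$ is zero. Only the $g_iX$ with $\delta_i=d$ contribute at that level, and we group them by their common top vertex $T$ at level $d+n$, giving groups $G_T$ whose base $n$-blocks are pairwise disjoint. Hence each $G_T$'s base contribution vanishes in $V_n$ separately. The key step is to upgrade this to vanishing of the entire contribution of $G_T$ (including at $T$ itself); once this is shown, delete $G_T$, raise $d$, and iterate. The main obstacle is precisely this upgrade: applying Lemma \ref{mua} to the vector identity yields only $\sum_{i\in G_T} c_i\sigma(k_i)\tilde{u}_i^{-h(p-1)}=0$ (from the leading coefficient along $v_{h(p-1),n}$), whereas one needs $\sum_{i\in G_T} c_i\sigma(k_i)=0$ for the top. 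I would attack this by absorbing the cyclic shifts coming from $\beta_i$ variation into the $\K\Z$-factors $k_i$, reducing to a uniform configuration, and then extracting the scalar equation from the lower-order terms in Lemma \ref{mua}.

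Part (2) follows quickly from Part (1). Write $g=\sum c_i g_iX$ with $\delta_i\geq 0$; only $\delta_i=0$ terms touch level $0$, each placing a cyclic shift of $\sigma(k_i)\mu_{\tilde{u}_i^{-1}}(v_{h(p-1),n})$ on its underlying $n$-block, and this lies in $V_{h(p-1)+1,n}$ by Lemma \ref{mua} combined with the cyclic-shift stability of $V_{h(p-1)+1,n}$ (applying Lemma \ref{del} with $k=1$ shows $\Delta$ preserves this space, hence so does $\Id+\Delta$). Thus the $n$-block of $g$ at level $0$ lies in $V_{h(p-1)+1,n}=V_{h_n+1,n}$, proving the $k=n$ case. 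For $1\leq k<n$, iterate Lemma \ref{extrbl} exactly $n-k$ times on the $k$-sub-block: starting from $\ell_n=h(p-1)+1$, the recursion $\ell\mapsto\lfloor(\ell-1)/p\rfloor+1$ strips off the lowest base-$p$ digit of $h(p-1)=\sum_{j=0}^{n-1}i_jp^j$ at each step, giving $\ell_k=1+\sum_{j=0}^{k-1}i_{j+n-k}p^j=h_k+1$.
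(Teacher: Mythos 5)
Your Part (2) is correct and tracks the paper's proof: Lemma \ref{mua} places each contributing $n$-block in $V_{h(p-1)+1,n}$, the $V_{k,n}$ are stable under the cyclic shift $\Id+\Delta$, and iterating Lemma \ref{extrbl} descends via the recursion $\ell \mapsto \lfloor(\ell-1)/p\rfloor+1$, which gives $h_k+1$ since it strips base-$p$ digits of $h(p-1)$.

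Your Part (1), however, has a genuine gap, and it sits exactly at the spot you flag as the ``main obstacle''. From the vanishing of the $n$-block of a group $G_T$ at the bottom level you extract, via Lemma \ref{mua}, the leading-coefficient relation $\sum_{i \in G_T} c_i \sigma(k_i) \tilde{u}_i^{-h(p-1)} = 0$, and you observe that killing the top vertex requires $\sum_{i \in G_T} c_i \sigma(k_i) = 0$. But these two equations are the same. The scalar $a^k$ in Lemma \ref{mua} lives in $\Fp \subset E$ and depends only on $a \bmod p$ (not on $a \bmod p^n$), since the binomial coefficients there are reduced mod $p$. As $\tilde{u}_i \in \Zp^\times$ has reduction in $\Fp^\times$, and as $h(p-1)$ is a multiple of $p-1$, Fermat's little theorem gives $\tilde{u}_i^{\pm h(p-1)} = 1$ in $E$. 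This is precisely what the paper is using when it asserts that the initial $n$-block of $g\cdot\phi - \sigma_1(a)\sigma_2(d)\phi$ lies in $V_{h(p-1),n}$ for $g = \smat{a & c \\ 0 & d} \in S$: the exponent being a multiple of $p-1$ is what makes the leading coefficient along $v_{h(p-1),n}$ exactly $\sigma_1(a)\sigma_2(d)$, that is, a fixed nonzero multiple of the top-vertex coefficient. Your proposed detour --- absorbing shifts into the $\K\Z$-factors and then mining lower-order terms of Lemma \ref{mua} --- is therefore unnecessary, and it is also not a workable fallback: the lower-order contributions lie in the $h(p-1)$-dimensional space $V_{h(p-1),n}$ and get mixed across the several shifts and $\mu_{\tilde{u}_i^{-1}}$'s, so they do not isolate a single scalar relation. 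With the Fermat observation inserted, your inductive-deletion scheme closes and coincides with the paper's argument.
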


\begin{proof}
Note first that if $\smat{a & c \\ 0 & d} \in \K\Z$, then
\begin{align*} 
\pmat{a & c \\ 0 & d} w_{\ell,n} & = \sum_{j=0}^{p^n-1} \binom{j}{\ell} \pmat{a & c \\ 0 & d} \left[\pmat{1 & -jp^{-n} \\ 0 & 1} \right] \\
& = \sum_{j=0}^{p^n-1} \binom{j}{\ell}  \left[\pmat{1 & -jp^{-n}ad^{-1} \\ 0 & 1}  \pmat{a & c \\ 0 & d} \right] \\
& = \sigma_1(a)\sigma_2(d) \sum_{j=0}^{p^n-1} \binom{jda^{-1}}{\ell}  \left[\pmat{1 & -jp^{-n} \\ 0 & 1} \right],
\end{align*}
and note also that the initial $n$-block of $\smat{1 & jp^{-n} \\ 0 & 1} w_{\ell,n} -w_{\ell,n}$ is in $V_{\ell,n}$. 

Let us now prove (1). Set $\B^0=\{ \smat{a & b \\  0 & d} \in \B$ such that $\val_p(a)=\val_p(d)\}$. It is enough to prove that any $\B^0$-linear combination of $\phi = (-\lambda^{-1})^n \left[\smat{1 & 0 \\ 0 & p^n}\right] + w_{h(p-1),n}$ which is zero in level $0$ is actually identically zero. If $\sum_{i \in I} \lambda_i \smat{a_i & b_i \\ 0 & d_i} \cdot \phi$ is such a combination where we assume for example (using the action of the center) that $d_i=1$, then the terms indexed by $i_1$ and $i_2$ contribute to the same $n$-block in level $0$ if and only if $b_{i_1} - b_{i_2} \in p^{-n}\Zp$ and we can therefore assume that 
\[ \pmat{a_i & b_i \\ 0 & d_i} \in S = \pmat{\Zp^\times & p^{-n}\Zp \\ 0 & \Zp^\times} \]
so that we're looking at the initial $n$-block. The formulas above and lemma \ref{mua} applied to $da^{-1} \in \Zp^\times$ show that if $g = \smat{a & c \\ 0 & d} \in S$, then the initial $n$-block of $g \cdot \phi - \sigma_1(a)\sigma_2(d) \phi$ belongs to $V_{h(p-1),n}$ so that in a linear combination of $S$-translates of $\phi$, the coefficient of $\left[\smat{1 & 0 \\ 0 & p^n}\right]$ is a nonzero multiple of the coefficient of $w_{h(p-1),n}$; if the latter is zero, then so is the former and our linear combination is identically zero. 

Let us now prove (2). The conclusion of (2) is stable under linear combinations of $\B^+$-translates so by (1) we only need to check that if $b \in \B^+$ then the $k$-blocks of $b w_{h_n,n}$ are in $V_{h_k+1,k}$. If $b=\Id$ then the $n$-block of $w_{h_n,n}$ is $v_{h_n,n}$ which belongs to $V_{h_n+1,n}$ by definition. If we know that the $k$-blocks are in $V_{h_k+1,k}$ then the fact that $\lfloor h_k / p \rfloor = h_{k-1}$ and lemma \ref{extrbl} imply that the $(k-1)$-blocks are in $V_{h_{k-1}+1,k-1}$ so we are done by induction. Next, the above formula for $\smat{a & c \\ 0 & d} w_{\ell,n}$ and lemma \ref{mua} applied to $da^{-1} \in \Zp^\times$ show that the $n$-blocks of the $\smat{a & c \\ 0 & d} w_{h_n,n}$ are contained in $V_{h_n+1,n}$ and we are reduced to the claim above. Finally, $g_{\beta,\delta} \cdot f$ is $f$ moved up by $\delta$ and shifted by $\beta$ and the conclusion of (2) is unchanged under those two operations since the $V_{k,n}$ are stable under the cyclic shift. 
\end{proof}

Recall that $\tau_k = \smat{1 & -1/p^k \\ 0 & 1}$ and that $\alpha(\beta,\delta)(\tau_k(f)) = \alpha(\beta+p^{\delta-k},\delta)(f)$ so that the effect of $\tau_k-\Id$ on a $k$-block $y$ in level $0$ is to replace it with $\Delta(y)$.

\begin{lemm}\label{freeblock}
If the support of $f \in S_n(h,\sigma)$ is contained in a single $k$-block with $0 \leq k \leq n$, then this $k$-block is in $V_{h_k,k}$ and all such elements do occur : $w_{\ell,k} \in S_n(h,\sigma)$ for $0 \leq \ell \leq h_k-1$.
\end{lemm}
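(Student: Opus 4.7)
The plan is to prove the upper bound (that the $k$-block lies in $V_{h_k,k}$) and the lower bound (that every $w_{\ell,k}$ for $0 \leq \ell \leq h_k-1$ lies in $S_n(h,\sigma)$), and to reduce the case $k<n$ to the case $k=n$.

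For the \textbf{lower bound} at $k=n$, the key observation is that $\tau_n$ fixes $[\smat{1 & 0 \\ 0 & p^n}]$: indeed $\tau_n \smat{1 & 0 \\ 0 & p^n} = \smat{1 & 0 \\ 0 & p^n}\smat{1 & -1 \\ 0 & 1}$ with $\smat{1 & -1 \\ 0 & 1} \in \K$ on which $\sigma$ is trivial. Hence $(\tau_n-\Id)\phi = (\tau_n-\Id)w_{h_n,n}$ has support entirely in the initial $n$-block at level $0$, with $n$-block $\Delta v_{h_n,n}$. The identity $\Delta v_{h_n,n} = -(\Id+\Delta) v_{h_n-1,n}$ shows this lies in $V_{h_n,n} \setminus V_{h_n-1,n}$. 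The subspace of achievable $n$-blocks of single-$n$-block-supported elements of $S_n(h,\sigma)$ is $\tau_n$-stable (as $\tau_n$ preserves the initial $n$-block setwise), hence $\Delta$-stable; by the classification of $\Delta$-stable subspaces as the $V_{m,n}$ noted after Lemma \ref{mua}, it contains $V_{h_n,n}$.

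For the \textbf{upper bound}, let $f \in S_n(h,\sigma)$ be supported in the initial $k$-block at level $0$ (after $\B$-translation). View $f$ as supported in the initial $n$-block via zero-padding at positions $j \not\equiv 0 \pmod{p^{n-k}}$. Lemma \ref{oneshtb}(2) places its $n$-block in $V_{h_n+1,n}$; write $x = \sum_{\ell=0}^{h_n} a_\ell v_{\ell,n}$. The support condition combined with Lucas's identity $\binom{j'p^{n-k}+j_0}{\ell'p^{n-k}+\ell_0} = \binom{j'}{\ell'}\binom{j_0}{\ell_0}$ decouples across $\ell'$ into constraints
\[ \sum_{\ell_0} a_{\ell'p^{n-k}+\ell_0}\binom{j_0}{\ell_0} = 0 \]
for each $\ell'$ and each $1 \leq j_0 \leq p^{n-k}-1$. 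The extracted $k$-block equals $y = \sum_{\ell'} a_{\ell'p^{n-k}} v_{\ell',k}$, whose $v_{h_k,k}$-coefficient is $a_{h_kp^{n-k}}$; at the top row $\ell'=h_k$ only $\ell_0 \leq r := h_n - h_k p^{n-k}$ contribute, and the matrix with entries $\binom{j_0}{\ell_0}$ has full column rank in the case $r < p^{n-k}-1$, forcing $a_{h_kp^{n-k}}=0$ and hence $y \in V_{h_k,k}$. Once the upper bound is established for $k=n$ and for $k<n$ (generic), single-$k$-block-supported elements correspond to $V_{h_n,n}$-valued single-$n$-block elements with zeros outside $p^{n-k}\ZZ$, and the same Lucas analysis shows their extracted $k$-blocks exhaust $V_{h_k,k}$, yielding both bounds for $k<n$.

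The main obstacle is the upper bound for $k=n$, which Lemma \ref{oneshtb}(2) pins down only to $V_{h_n+1,n}$; and the degenerate case $r = p^{n-k}-1$ for $k<n$, where the lower $n-k$ base-$p$ digits of $h(p-1)$ are all $p-1$ and the Pascal matrix has a one-dimensional kernel. In both situations the level-$0$ support analysis alone is insufficient; one must additionally invoke the vanishing of $f$ at level $n$. Expanding $f = \sum_i \lambda_i b_i \phi$ via Lemma \ref{oneshtb}(1) and tracking how the ``top'' level-$0$ coefficient $a_{h_k p^{n-k}}$ couples to the level-$n$ foot contributions through the specific structure of $\phi$ provides the missing equation and completes the proof.
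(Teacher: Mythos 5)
Your lower-bound argument at $k=n$ is essentially the paper's: apply $\tau_n - \Id$ to the generator, observe that $\tau_n$ fixes $\left[\smat{1 & 0 \\ 0 & p^n}\right]$ so only the level-$0$ part survives with $n$-block $\Delta v_{h_n,n}$, and use cyclic-shift ($=\Id+\Delta$) stability plus the classification of $\Delta$-stable subspaces to conclude that $V_{h_n,n}$ is achieved. That part is fine. The genuine gap is exactly what you flag as ``the main obstacle'': you never establish the upper bound at $k=n$, and the sketch you offer (``expanding $f = \sum_i \lambda_i b_i \phi$ and tracking the coupling to the level-$n$ foot contributions'') is not a proof. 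The paper handles this in one stroke: if $V_{h_n+1,n}$ were achieved, then in particular $w_{h_n,n} \in S_n(h,\sigma)$, and subtracting it from the generator puts $\left[\smat{1 & 0 \\ 0 & p^n}\right]$ and hence $[\Id]$ into $S_n(h,\sigma)$, forcing $\Pi_n(h,\sigma)=0$; but Lemma \ref{oneshtb}(2) rules this out, since the $1$-block of $[\Id]$ at level $0$ is $(1,0,\hdots,0) \notin V_{h_1+1,1}$ (here $h_1+1 = i_{n-1}+1 \leq p-1 < p$ because $h\leq p^{n-1}-1$). Until you close this, the rest of your plan does not go through, since you explicitly rely on the $n$-block lying in $V_{h_n,n}$ rather than $V_{h_n+1,n}$.

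For the descent to $k<n$ you take a different route from the paper: instead of a descending induction via the operators $U_m = \sum_i \binom{i}{m}\tau_{k+1}^i$ acting block-by-block, you embed the $k$-block into the $n$-block via zero-padding and decouple the vanishing constraints using Lucas. This is a legitimate and in fact pleasant alternative; note moreover that your ``degenerate case $r=p^{n-k}-1$'' is an artifact of working with $V_{h_n+1,n}$: once the $k=n$ upper bound is known, the index $\ell_0$ at $\ell'=h_k$ ranges only over $\{0,\dots,r-1\}$, the relevant Pascal submatrix always has full column rank, and you get $a_{h_k p^{n-k}}=0$ unconditionally. Likewise the lower bound for $k<n$ then follows from your correspondence together with surjectivity of the extraction map for each $\ell'<h_k$ (the $1$-dimensional kernel of the Pascal matrix, spanned by $(1,-1,1,\dots)$, lets $a_{\ell'p^{n-k}}$ be arbitrary). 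So your plan is coherent and would yield a correct proof, but as written it is missing the $k=n$ upper bound, which is its load-bearing step.
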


\begin{proof}
If $k=n$ then the $n$-block of $\tau_n(w_{h_n,n})-w_{h_n,n}$ is $v_{h_n-1,n}$ and the set of possible $n$-blocks is stable under the cyclic shift so we get all of $V_{h_n,n}$ but not $V_{h_n+1,n}$ since $\Pi_n(h,\sigma) \neq 0$. If some $v_{\ell,k}$ occurs as the $k$-block of some $f$, wlog in level $0$, then for all $0 \leq m \leq p-1$ the $(k+1)$-block of $\sum_{i=0}^{p-1} \binom{i}{m} \tau_{k+1}^i(f)$ is $[\binom{0}{m}v_{\ell,k},\hdots,\binom{p-1}{m}v_{\ell,k}]$ and this is $v_{p\ell+m,k+1}$ since $\binom{j}{\ell}\binom{i}{m}=\binom{pj+i}{p\ell+m}$ by lemma \ref{lucas}. In particular if $v_{h_k,k}$ occurred then so would $v_{h_{k+1},k+1}$ and we get a contradiction. Conversely, assuming inductively that the second assertion of the lemma holds for $k+1$, this tells us that all $[\binom{0}{m}v_{\ell,k},\hdots,\binom{p-1}{m}v_{\ell,k}]$ occur as a $(k+1)$-block for $p\ell + m \leq h_{k+1}-1$ and by taking $m=p-1$ and $\ell \leq h_k-1$ we obtain $v_{\ell,k}$ and we are done by a descending induction on $k$.
\end{proof}

Let us write as above a $(n+1)$-block as $[b_0,\dots,b_{p-1}]$ where each $b_i$ is a $n$-block. 

\begin{lemm}\label{shtb}
If the support of $g \in S_n(h,\sigma)$ is in levels $0$, $1$, \dots, $n-1$ then the $(n+1)$-blocks of level $0$ of $g$ are of the form
\[ [\mu_0 v_{h_n,n}+x_0,\dots,\mu_{p-1} v_{h_n,n}+x_{p-1}], \] 
where $x_i \in V_{h_n,n}$ and $(\mu_0,\hdots,\mu_{p-1}) \in V_{h_1+1,1}$.
\end{lemm}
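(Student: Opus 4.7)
The plan is to use Lemma \ref{oneshtb}(1) to write $g$ as a $\B^+$-linear combination of $\phi = (-\lambda^{-1})^n \left[\smat{1 & 0 \\ 0 & p^n}\right] + w_{h_n,n}$, and to exploit the fact that each translate $g_{\beta,\delta}k\phi$ (with $\delta \geq 0$) couples a bottom $n$-block at level $\delta$ to a top single-vertex contribution at level $\delta + n$. Only $\delta = 0$ translates contribute to level $0$, and contributions to level $n$ come either from tops of $\delta = 0$ translates or from bottoms of $\delta = n$ translates. Since the support hypothesis forces the content of $g$ at level $n$ to vanish, the resulting cancellation will yield the desired constraint on the $\mu_i$'s.

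First, a direct base-$p$ digit computation shows that the $n$-block $b_i$ (the $i$-th piece of the initial $(n+1)$-block at level $0$) agrees with the bottom of $g_{\beta,0}k\phi$ exactly when $\beta p^n \equiv -i/p \pmod{\Zp}$; this same condition on $\beta$ is exactly what places the top of that translate at the vertex $g_{-i/p,n}$, which is the $i$-th vertex of the initial $1$-block at level $n$. Applying the formula for $kw_{h_n,n}$ from the proof of Lemma \ref{oneshtb}, together with Lemma \ref{mua} (noting $(d/a)^{h_n}=(d/a)^{(p-1)h}=1$ in $\FF_p^\times$) and Lemma \ref{del} (to absorb the cyclic shifts of $v_{h_n,n}$ within the $n$-block, which reduce to $v_{h_n,n}$ modulo $V_{h_n,n}$), each such bottom contributes an element $\equiv \sigma(k_s) v_{h_n,n}$ modulo $V_{h_n,n}$, while the corresponding top contributes $(-\lambda^{-1})^n \sigma(k_s) [g_{-i/p,n}]$. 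Summing, $b_i \equiv \mu_i v_{h_n,n} \pmod{V_{h_n,n}}$ with $\mu_i = \sum_s \lambda_s \sigma(k_s)$ ranging over the translates $g_{\beta_s,0}k_s\phi$ with $\beta_s p^n \equiv -i/p$, and the total top contribution from these same translates to the initial $1$-block at level $n$ is the vector $(-\lambda^{-1})^n (\mu_0,\ldots,\mu_{p-1}) \in V_1$.

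The only other contributions to the initial $1$-block at level $n$ come from bottoms of $\delta = n$ translates whose $n$-block equals the initial $n$-block at level $n$, since that is the unique $n$-block at level $n$ containing the initial $1$-block. Each such bottom lies in $V_{h_n+1,n}$, so their total contribution to the initial $n$-block at level $n$ also lies in $V_{h_n+1,n}$. Iterating Lemma \ref{extrbl} $n-1$ times with $i=0$ (using $\lfloor h_k/p \rfloor = h_{k-1}$), the extraction of the subvector at indices $0,p^{n-1},2p^{n-1},\ldots,(p-1)p^{n-1}$ sends $V_{h_n+1,n}$ into $V_{h_1+1,1}$; this subvector is exactly the initial $1$-block at level $n$ viewed inside the initial $n$-block at level $n$. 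The vanishing of the initial $1$-block at level $n$ of $g$ therefore gives $(-\lambda^{-1})^n(\mu_0,\ldots,\mu_{p-1}) \in V_{h_1+1,1}$, whence $(\mu_0,\ldots,\mu_{p-1}) \in V_{h_1+1,1}$ as required.

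The main obstacle is the careful bookkeeping: one must verify the bijective correspondence between ``$\delta = 0$ translates with bottom in $b_i$'' and ``$\delta = 0$ translates with top at the $i$-th vertex of the initial $1$-block at level $n$'', and confirm that the various cyclic shifts of $v_{h_n,n}$ within an $n$-block (coming from different $\beta$'s in the fiber over a fixed value of $\beta p^n \bmod{\Zp}$) are genuinely invisible modulo $V_{h_n,n}$.
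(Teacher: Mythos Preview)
Your argument is correct and the bookkeeping checks out: the key identity is that for $b=g_{\beta,0}k$ the top of $b\phi$ sits at $g_{p^n\beta,n}$, so the condition $p^n\beta\equiv -i/p\pmod{\Zp}$ simultaneously pins the bottom $n$-block to $b_i$ and the top vertex to $g_{-i/p,n}$; together with $(d/a)^{h_n}=1$ and the shift-invariance of $v_{h_n,n}$ modulo $V_{h_n,n}$, this gives $b_i\equiv\mu_i v_{h_n,n}$ with the same $\mu_i$ appearing as the top coefficient, and your iterated use of Lemma~\ref{extrbl} on the $\delta=n$ contributions is exactly what is needed to land in $V_{h_1+1,1}$.

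Your route is genuinely different from the paper's. The paper first invokes Lemma~\ref{evenodd} to reduce to support in level $0$ alone, then quotes Lemma~\ref{oneshtb}(2) to get each $b_i\in V_{h_n+1,n}$ without any explicit summation over translates; it then \emph{modifies $g$ inside $S_n(h,\sigma)$}, using Lemma~\ref{freeblock} to strip off the $x_i$'s and then subtracting translates of $\phi$ to push everything up to level $n$, where a second application of Lemma~\ref{oneshtb}(2) (with $k=1$) finishes. You instead keep $g$ fixed, invoke Lemma~\ref{oneshtb}(1) to resolve it into $\B^+$-translates of $\phi$, and do the accounting by hand, never needing Lemma~\ref{evenodd} or Lemma~\ref{freeblock} at all. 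The paper's approach is shorter and more conceptual (everything is ``subtract within $S_n(h,\sigma)$ and reapply \ref{oneshtb}''), while yours is more explicit and makes visible the exact mechanism---the bijection between $n$-blocks of an $(n+1)$-block at level $0$ and vertices of the ancestor $1$-block at level $n$---that the paper's subtraction step encodes implicitly.
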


\begin{proof}
By lemma \ref{evenodd}, we may assume that the support of $g$ is in level $0$ and lemma \ref{oneshtb} tells us that the $n$-blocks of $g$ are in $V_{h_n+1,n}$ so that each of them can be written as $\mu_i v_{h_n,n}+x_i$ where $x_i \in V_{h_n,n}$. By subtracting from $g$ appropriate combinations of translates of the $w_{\ell,n}$ with $0 \leq \ell \leq h_n-1$ we get a $g'$ such that $x_i=0$ for all $i$ and by subtracting appropriate combinations of translates of $(-\lambda^{-1})^n \left[\smat{1 & 0 \\ 0 & p^n}\right] + w_{h_n,n}$ from $g'$ we get an element $g''$ of $S_n(h,\sigma)$ with support in level $n$ and whose $1$-blocks are the $-(-\lambda^{-1})^n (\mu_0,\hdots,\mu_{p-1})$. Lemma \ref{oneshtb} applied to $\smat{1 & 0 \\ 0 & 1/p^n} g''$ gives us $(\mu_0,\hdots,\mu_{p-1}) \in V_{h_1+1,1}$.
\end{proof}

\begin{coro}\label{tauinvh}
If the support of $f \in  \ind_{\K\Z}^{\B} \sigma$ is in levels $0$, $1$, \dots, $n-1$ and if $\tau_{n+1}(f) - f \in S_n(h,\sigma)$, then the $n$-blocks of level $0$ of $f$ are in $V_{h_n+1,n}$.
\end{coro}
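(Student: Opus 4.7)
The plan is to apply lemma \ref{shtb} to $g := \tau_{n+1}(f)-f\in S_n(h,\sigma)$, identify the $(n+1)$-block of $g$ at level $0$ with the $\Delta$-image of the corresponding $(n+1)$-block of $f$, and then use lemma \ref{del} at the $(n+1)$-block level combined with the Lucas decomposition $v_{p\ell+m,n+1}\leftrightarrow[\binom{i}{m}v_{\ell,n}]_i$ from lemma \ref{lucas} to read off the $n$-blocks of $f$.

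First I would observe that $\tau_{n+1}$ preserves the level $\delta$ of every basis element $g_{\beta,\delta}$, so $g$ still has support in levels $0,\dots,n-1$, and lemma \ref{shtb} applies: each $(n+1)$-block of $g$ at level $0$ equals $[\mu_0 v_{h_n,n}+x_0,\dots,\mu_{p-1}v_{h_n,n}+x_{p-1}]$ with $x_i\in V_{h_n,n}$ and $(\mu_i)\in V_{h_1+1,1}$. Expanding $(\mu_i)=\sum_{m=0}^{h_1}\lambda_m v_{m,1}$ and applying lemma \ref{lucas}, the piece $[\mu_0 v_{h_n,n},\dots,\mu_{p-1}v_{h_n,n}]$ equals $\sum_m\lambda_m v_{ph_n+m,n+1}\in V_{ph_n+h_1+1,n+1}$, while $[x_0,\dots,x_{p-1}]$ with $x_i\in V_{h_n,n}$ lies in $V_{ph_n,n+1}$. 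Hence the entire $(n+1)$-block of $g$ at level $0$ belongs to $V_{ph_n+h_1+1,n+1}$.

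Second, recalling the remark just before lemma \ref{freeblock} that $\tau_{n+1}-\Id$ acts on an $(n+1)$-block at level $0$ as the finite difference $\Delta$ on $V_{n+1}$, I see that if $Y$ denotes an $(n+1)$-block of $f$ at level $0$ then $\Delta Y$ is exactly the corresponding block of $g$, so $\Delta Y\in V_{ph_n+h_1+1,n+1}$, and lemma \ref{del} yields $Y\in V_{ph_n+h_1+2,n+1}$.

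Finally, I would unpack the $n$-block decomposition $Y=[b_0,\dots,b_{p-1}]$ by writing $Y=\sum_{k<ph_n+h_1+2}\gamma_k v_{k,n+1}$ and decomposing $k=p\ell+m$ with $0\leq m\leq p-1$. The inequality $p\ell+m\leq ph_n+h_1+1$ combined with $h_1\leq p-2$ (which holds because $h\leq p^{n-1}-1$) forces $\ell\leq h_n$. By lemma \ref{lucas}, each surviving $v_{p\ell+m,n+1}$ contributes $\binom{i}{m}v_{\ell,n}$ to $b_i$, so each $b_i$ is a linear combination of $v_{\ell,n}$ with $\ell\leq h_n$, i.e.\ $b_i\in V_{h_n+1,n}$. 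Since every $n$-block at level $0$ arises as the $i$-th slot of some $(n+1)$-block at level $0$, this proves the corollary. The only delicate point is the bookkeeping that turns the bound $ph_n+h_1+2$ on $Y$ into the bound $h_n+1$ on each $b_i$; this is precisely where the hypothesis $h\leq p^{n-1}-1$, through $h_1\leq p-2$, is used to rule out the otherwise spurious case $\ell=h_n+1$.
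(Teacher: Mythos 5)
Your proof is correct, and it takes a genuinely different route from the paper at the key step. Both arguments begin identically by applying lemma \ref{shtb} to $g = \tau_{n+1}(f)-f$, and both ultimately rest on lemma \ref{del}, but they apply the difference operator $\Delta$ at different scales. The paper stays at the $n$-block level: it writes the $(n+1)$-block of $g$ as a $p \times p^n$ array of finite differences of coefficients of $f$, sums the rows (a telescoping sum) to obtain $\Delta(y_0)$ where $y_0$ is the first $n$-sub-block of $f$, observes that this sum equals $\sum_i x_i \in V_{h_n,n}$ because $\sum_i \mu_i = 0$ (which is where $h_1+1\leq p-1$ enters), applies lemma \ref{del} with parameter $n$, and finally repeats with $\tau_{n+1}^j(f)$ to reach the remaining sub-blocks. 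You instead stay at the $(n+1)$-block level throughout: you show the $(n+1)$-block of $g$ lies in $V_{ph_n+h_1+1,n+1}$ (using Lucas to reassemble the sub-block description of lemma \ref{shtb} into a single $V_{k,n+1}$-membership statement), apply lemma \ref{del} with parameter $n+1$ to get $Y\in V_{ph_n+h_1+2,n+1}$, and then extract all sub-blocks at once by the bookkeeping $p\ell+m\leq ph_n+h_1+1 \Rightarrow \ell\leq h_n$ (this last extraction is exactly lemma \ref{extrbl}, which you could have cited instead of redoing the Lucas computation). The hypothesis $h_1\leq p-2$ is used in both, but for different purposes: to force $\sum\mu_i=0$ in the paper, and to control the carry in $p\ell+m\leq ph_n+h_1+1$ in yours. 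Your version is arguably slightly cleaner in that it avoids the explicit array manipulation and produces all $p$ sub-blocks simultaneously, at the cost of one more Lucas-type reconstruction. One small unjustified step worth noting: you assert that $[x_0,\dots,x_{p-1}]$ with each $x_i\in V_{h_n,n}$ lies in $V_{ph_n,n+1}$; this is true, and follows from lemma \ref{extrbl} plus a dimension count (both spaces have dimension $ph_n$), but it deserves a sentence.
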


\begin{proof}
Lemma \ref{shtb} applied to $\tau_{n+1}(f)-f$ tells us that the $(n+1)$-blocks of $\tau_{n+1}(f)-f$ in level $0$ are of the form $[\mu_0 v_{h_n,n}+x_0,\dots,\mu_{p-1} v_{h_n,n}+x_{p-1}]$ with $x_i \in V_{h_n,n}$ and $(\mu_0,\hdots,\mu_{p-1}) \in V_{h_1+1,1}$. If we write $f = \sum_{\beta,\delta} \alpha(\beta,\delta) [g_{\beta,\delta}]$, then the coefficient of $[g_{\beta,0}]$ in $\tau_{n+1}(f)-f$ is $\alpha(\beta+p^{-n-1},0) - \alpha(\beta,0)$ so that the $n$-blocks of $\tau_{n+1}(f)-f$ are given by (for readability, we omit both $\beta$ and $\delta=0$ from the notation)

{\footnotesize{\[ \begin{matrix}
\alpha\left(\frac{1}{p^{n+1}}\right) - \alpha(0) &  \alpha\left(\frac{1}{p^{n+1}}+\frac{1}{p^n}\right) - \alpha\left(\frac{1}{p^n}\right) & \hdots & \alpha\left(\frac{1}{p^{n+1}}+\frac{p^n-1}{p^n}\right) - \alpha\left(\frac{p^n-1}{p^n} \right)   \\
\alpha\left(\frac{2}{p^{n+1}}\right) - \alpha\left(\frac{1}{p^{n+1}}\right) & \alpha\left(\frac{2}{p^{n+1}}+\frac{1}{p^n}\right) - \alpha\left(\frac{1}{p^{n+1}}+\frac{1}{p^n}\right) & \hdots & \alpha\left(\frac{2}{p^{n+1}}+\frac{p^n-1}{p^n}\right) - \alpha\left(\frac{1}{p^{n+1}}+\frac{p^n-1}{p^n}  \right) \\
\vdots & \vdots & & \vdots \\
\alpha\left(\frac{p}{p^{n+1}}\right) - \alpha\left(\frac{p-1}{p^{n+1}}\right) & \alpha\left(\frac{p}{p^{n+1}}+\frac{1}{p^n}\right) - \alpha\left(\frac{p-1}{p^{n+1}}+\frac{1}{p^n}\right)  & \hdots & \alpha\left(\frac{p}{p^{n+1}}+\frac{p^n-1}{p^n}\right) - \alpha\left(\frac{p-1}{p^{n+1}}+\frac{p^n-1}{p^n}  \right).
\end{matrix} \]}}

Let $y_0,\hdots,y_{p-1}$ be the $n$-blocks of the $(n+1)$-block of $f$ we are considering. By summing the rows of the above array, we get (recall that $\alpha(\beta)=\alpha(1+\beta)$)
\[ \begin{matrix}  \alpha\left(\beta+\frac{1}{p^n}\right) - \alpha(\beta) &
\alpha\left(\beta+\frac{2}{p^n}\right) - \alpha\left(\beta+\frac{1}{p^n}\right) &
\hdots & \alpha(\beta) - \alpha\left(\beta+\frac{p^n-1}{p^n}\right) 
\end{matrix} \]
which is $\Delta(y_0)$ so that
\[ \Delta(y_0) = \sum_{i=0}^{p-1} (\mu_i v_{h_n,n}+x_i) 
=  \sum_{i=0}^{p-1} x_i \in  V_{h_n,n} \] 
since $\sum_{i=0}^{p-1} \mu_i =0$ because $(\mu_0,\hdots,\mu_{p-1}) \in V_{h_1+1,1}$ with $h_1 +1 = i_{n-1} + 1 \leq p-1$ and if $\Delta(y_0) \in V_{h_n,n}$ then $y_0 \in V_{h_n+1,n}$ by lemma \ref{del}. The same result holds for $y_j$ by applying the previous reasoning to $\tau_{n+1}^j(f)$.
\end{proof}

\begin{coro}\label{furtherwid}
If the support of $f \in \ind_{\K\Z}^{\B} \sigma$ is in levels $0$, $1$, \dots, $n-1$ and the support in level $0$ is included in a single $n$-block and $\tau_n(f)-f  \in S_n(h,\sigma)$, then the $n$-block of $f$ in level $0$ is in $V_{h_n+1,n}$.
\end{coro}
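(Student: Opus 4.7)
The plan is to apply Lemma \ref{shtb} to $g = \tau_n(f) - f$, which by hypothesis lies in $S_n(h, \sigma)$ and, since $\tau_n$ preserves heights, still has support in levels $0, 1, \ldots, n-1$. The conclusion of that lemma constrains the shape of each $(n+1)$-block of $g$ at level $0$, and the single-$n$-block hypothesis on $f$ at level $0$ will let me isolate the unique $(n+1)$-block of interest.

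First I observe that $\tau_n - \Id$ acts on an $n$-block at level $0$ exactly as the difference operator $\Delta$: since $\alpha(\beta, 0)(\tau_n(f)) = \alpha(\beta + p^{-n}, 0)(f)$, the map $\tau_n$ cyclically shifts the entries of any level-$0$ $n$-block. Writing $y$ for the $n$-block of $f$ at level $0$, the support of $g$ at level $0$ therefore still lies inside the very same $n$-block, and equals $\Delta(y)$ there.

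Now consider the unique $(n+1)$-block $B$ of $g$ at level $0$ containing this $n$-block, and write $B = [b_0, \ldots, b_{p-1}]$ as in Lemma \ref{shtb}: exactly one $b_{i^*}$ equals $\Delta(y)$, while $b_i = 0$ for $i \neq i^*$. The lemma provides the decomposition $b_i = \mu_i v_{h_n, n} + x_i$ with $x_i \in V_{h_n, n}$ and $(\mu_0, \ldots, \mu_{p-1}) \in V_{h_1+1, 1}$. Since $v_{h_n, n} \notin V_{h_n, n}$, the vanishing $b_i = 0$ for $i \neq i^*$ forces $\mu_i = 0$ and $x_i = 0$ for all such $i$. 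The crux is then that $h_1 = i_{n-1} \leq p - 2$, so $h_1 + 1 \leq p - 1$ and Lemma \ref{orthovkn} gives $V_{h_1+1, 1}^{\perp} = V_{p - h_1 - 1, 1} \ni v_{0, 1} = (1, \ldots, 1)$; hence $\sum_i \mu_i = 0$, and this forces $\mu_{i^*} = 0$ as well.

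It follows that $\Delta(y) = b_{i^*} = x_{i^*} \in V_{h_n, n}$, and Lemma \ref{del} then yields $y \in V_{h_n+1, n}$, as required. The main subtlety is that a direct application of Lemma \ref{oneshtb}(2) to $\tau_n(f) - f$ only gives $\Delta(y) \in V_{h_n+1, n}$, which by Lemma \ref{del} would yield the weaker conclusion $y \in V_{h_n+2, n}$; the extra precision is extracted by feeding the zero $n$-blocks in $B$ into the orthogonality $V_{h_1+1, 1} \perp (1, \ldots, 1)$ to also kill the coefficient $\mu_{i^*}$.
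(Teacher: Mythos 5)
Your proof is correct and follows essentially the same route as the paper's: apply Lemma \ref{shtb} to $g=\tau_n(f)-f$, use the single-$n$-block hypothesis to force all but one $\mu_i$ to vanish, deduce the remaining $\mu_{i^*}=0$ from $(\mu_0,\hdots,\mu_{p-1})\in V_{h_1+1,1}$ with $h_1+1\leq p-1$, and then conclude via $\Delta(y)\in V_{h_n,n}$ and Lemma \ref{del}. The only cosmetic difference is that you make explicit the orthogonality $V_{h_1+1,1}\perp v_{0,1}=(1,\hdots,1)$ (via Lemma \ref{orthovkn}) where the paper simply asserts $(\mu_0,0,\hdots,0)\in V_{h_1+1,1}\Rightarrow\mu_0=0$, and you do not normalize the nonzero $n$-block to sit in position $0$ of its $(n+1)$-block; both of these are valid elaborations, not a different argument.
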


\begin{proof}
Lemma \ref{shtb} applied to $g = \tau_n(f)-f$ tells us that the $n$-block of $\tau_n(f)-f$ is of the form $\mu_0 v_{h_n,n}+x_0$ with $x_0 \in V_{h_n,n}$ and $(\mu_0,0,\hdots,0) \in V_{h_1+1,1}$ so that $\mu_0=0$ since $h_1 +1 \leq p-1$. If $y$ denotes the $n$-block of $f$ then the $n$-block of $\tau_n(f)-f$ is $\Delta(y)$ so that $\Delta(y) \in V_{h_n,n}$ and therefore $y \in V_{h_n+1,n}$ by lemma \ref{del}.
\end{proof}

If $n \geq 1$ and if $1 \leq h \leq p^n-2$, we say that $h$ is primitive if there is no $d<n$ dividing $n$ such that $h$ is a multiple of $(p^n-1)/(p^d-1)$. This condition is equivalent to requiring that if we write $h=e_{n-1} \hdots e_1 e_0$ in base $p$, then the map $i \mapsto e_i$ from $\ZZ/n\ZZ$ to $\{0,\hdots,p-1\}$ has no period strictly smaller than $n$.

\begin{theo}\label{irredim2}
If $n \geq 2$ and if $1 \leq h \leq p^{n-1}-1$ is primitive, then $\Pi_n(h,\sigma)$ is irreducible.
\end{theo}

\begin{proof}
It is enough to show that if $f \in  \ind_{\K\Z}^{\B} \sigma$ is such that $\overline{f} \neq 0$ in $\Pi_n(h,\sigma)$ then some linear combination of translates of $f$ is equal to $[\Id] \bmod{S_n(h,\sigma)}$. 

Suppose that the support of $f$ is in levels $\geq a$. Since $(-\lambda^{-1})^n \left[\smat{1 & 0 \\ 0 & p^n}\right] + w_{h_n,n}$ is an element whose support is one element of height $n$ and a $n$-block of height $0$, by subtracting suitable linear combinations of translates of this from $f$ we may assume that the support of $f$ is in levels $a$, $a+1$, \dots, $a+n-1$; multiplying $f$ by some power of $\smat{1 & 0 \\ 0 & p}$ we may then assume that the support of $f$ is in levels $0$, $1$, \dots, $n-1$. In particular, we have $f \in (\ind_{\K\Z}^{\B} \sigma)^{\smat{1 & \Zp \\ 0 & 1}}$. Let  $s_0,s_1,\hdots,s_{n-1} \gg 0$ be such that the support of $f$ is included in the initial $s_0$-block in level $0$,  the initial $s_1$-block in level $1$, \dots, the initial $s_{n-1}$-block in level $n-1$.

Lemma \ref{existfix} applied with $k=n+1$ shows that we may replace $f$ by one of the $\sum_{j=0}^{p^{n+1}-1} \binom{j}{\ell} \tau_{n+1}^j (f)$ so that $\tau_{n+1}(f) - f \in S_n(h,\sigma)$. The support of this new $f$ is included in the initial $\max(s_j,n+1-j)$-block in level $j$ for $0 \leq j \leq n-1$. Corollary \ref{tauinvh} then shows that there exists $g \in S_n(h,\sigma)$ which is a linear combination of $\smat{1 & \Qp \\ 0 & 1}$-translates of $(-\lambda^{-1})^n \left[\smat{1 & 0 \\ 0 & p^n}\right] + w_{h_n,n}$ and of the $w_{\ell,n}$ for $0 \leq \ell \leq h_n-1$ such that the $n$-blocks of $f$ in level $0$ are the same as the $n$-blocks of $g$ in level $0$. We can then replace $f$ by $\smat{1 & 0 \\ 0 & 1/p}(f-g)$ and the support of this new $f$ is included in the initial $\max(s_{j+1},n-j)$-block in level $j$ for $0 \leq j \leq n-2$ and in the initial $\max(s_0-n,1)$-block in level $n-1$ if $j=n-1$. By iterating the procedure of this paragraph, we can reduce the width of the support of $f$ until $s_j=n-j$ for $0 \leq j \leq n-1$.

The modified $f$ coming from the previous paragraph satisfies $\tau_n(f) - f \in S_n(h,\sigma)$ and its support is included in the initial $(n-j)$-block in level $j$ for $0 \leq j \leq n-1$. Corollary \ref{furtherwid} then shows that there exists $g \in S_n(h,\sigma)$ which is a linear combination of $\smat{1 & \Qp \\ 0 & 1}$-translates of $(-\lambda^{-1})^n \left[\smat{1 & 0 \\ 0 & p^n}\right] + w_{h_n,n}$ and of the $w_{\ell,n}$ for $0 \leq \ell \leq h_n-1$ such that the $n$-block of $g$ in level $0$ is the same as the $n$-block of $f$ in level $0$. We can then replace $f$ by $\smat{1 & 0 \\ 0 & 1/p}(f-g)$ and the support of this new $f$ is included in the initial $(n-j-1)$-block in level $j$ for $0 \leq j \leq n-1$. 

The modified $f$ coming from the previous paragraph satisfies $\tau_{n-1}(f) - f \in S_n(h,\sigma)$ and its support is included in the initial $(n-j-1)$-block in level $j$ for $0 \leq j \leq n-1$ and the $k$-block $x_k$ of $f$ in level $n-k-1$ is in $V_{h_k+1,k}$ by applying lemmas \ref{evenodd}, \ref{freeblock} and \ref{del}. By lemma \ref{freeblock}, we can subtract elements of $V_{h_k,k}$ from $x_k$ without changing the class of $f$ in $\Pi_n(h,\sigma)$ so we can assume that each $x_k$ is a (possibly $0$) multiple of $v_{h_k,k}$. If $0 \leq m \leq p-1$, let $U_m$ be the operator defined by $U_m(f) = \sum_{i=0}^{p-1} \binom{i}{m} \tau_n^i(f)$ as in the proof of lemma \ref{freeblock}. At level $n-1-k$ it has the effect of turning $v_{h_k,k}$ into $v_{h_{k+1}+ m - i_{n-k-1},k+1}$ since $h_{k+1} = ph_k +i_{n-k-1}$ and $\binom{j}{\ell} \binom{i}{m} = \binom{pj+i}{p\ell+m}$. If we choose $m$ such that $m - i_{n-k-1} \leq 0$ and $m - i_{n-k-1}=0$ for at least one value of $k$, then $U_m(f)$ is made up of $(k+1)$-blocks in level $n-k-1$ and we can get rid of all those for which $m - i_{n-k-1} \leq -1$. This allows us to lower the number of nonzero blocks of $f$ unless $m = i_{n-k-1}$ for all the corresponding nonzero blocks. In this case we lower $f$ by one level and if there is a block in level $0$ we send it to level $n$ before lowering $f$ by subtracting an appropriate multiple of $(-\lambda^{-1})^n \left[\smat{1 & 0 \\ 0 & p^n}\right] + w_{h_n,n}$. By iterating this procedure (replacing $f$ by $U_m(f)$ and lowering a possibly modified $f$), we can reduce the number of nonzero blocks of $f$ until our procedure starts cycling. 

If this is the case then there exists some $d$ dividing $n$ such that at some point $f$ has nonzero blocks exactly in levels $n-1-\ell d$ for $0 \leq \ell \leq (n/d)-1$ and the map $r \mapsto i_r$ is then also periodic of period $d$. If $d=n$, then we are done. If $d<n$, then I claim that $h_d$ is not divisible by $p-1$. Indeed, we have $h(p-1) = h_d(p^n-1)/(p^d-1)$ since $r \mapsto i_r$ is periodic of period $d$, and if $p-1$ divides $h_d$ then $h$ is not primitive. If $a \in \Zp^\times$ is such that $\overline{a}$ is a generator of $\Fp^\times$, then $\mu_a(v_{\ell,k}) - a^\ell v_{\ell,k} \in V_{\ell,k}$ by lemma \ref{mua}. This implies that $\sigma_2(a^{-1})\smat{1 & 0 \\ 0 & a} f - f$ has at least one fewer block (the top one) and is nonzero (the block of level $n-1-d$ is not in $S_n(h,\sigma)$), so that we can iterate again our procedure of the previous paragraph (replacing $f$ by $U_m(f)$ and lowering a possibly modified $f$) until $d=n$ so that $f$ becomes equivalent to an element supported on only one point.
\end{proof}

\begin{rema}\label{bze}
We have $\Pi_n(h,\sigma) \otimes (\chi \circ \det) \simeq \Pi_n(h,\sigma_1 \chi \otimes \sigma_2 \chi)$ by lemma \ref{intert}.
\end{rema}

\section{Galois representations and $(\phi,\Gamma)$-modules}
\label{secpgm}

In this chapter, we construct the $(\phi,\Gamma)$-modules associated to the absolutely irreducible $E$-linear representations of $\gal$ and then apply Colmez' functor to them in order to get a smooth irreducible representation of $\B$. 

\Subsection{Construction of $(\phi,\Gamma)$-modules}
\label{fontaine}

Let $\Cp$ be the completion of $\Qpbar$ and let $\et^+ = \projlim_{x \mapsto x^p} \OO_{\Cp}$ be the ring defined by Fontaine (see for example \S 1.2 of \cite{F3}). Recall that if $x,y \in \et^+$, then 
\[ (xy)^{(i)} = x^{(i)} y^{(i)} \quad\text{and}\quad (x+y)^{(i)}=\lim_{j \to \infty} (x^{(i+j)}+y^{(i+j)})^{p^j} \] 
and that $\et^+$ is endowed with the valuation $\ve$ defined by $\ve(y) = \val_p(y^{(0)})$. If we choose once and for all a compatible system $\{\zeta_{p^n}\}_{n \geq 0}$ of $p^n$-th roots of $1$ then $\eps=(1,\zeta_p,\zeta_{p^2},\hdots) \in \et^+$ and we set $X=\eps-1$ and $\et=\et^+[1/X]$ so that by \S 4.3 of \cite{W83}, $\et$ is an algebraically closed field of characteristic $p$, which contains $\Fp\dpar{X}^{\text{sep}}$ as a dense subfield. Given the construction of $\et$ from $\Cp$, we see that it is endowed with a continuous action of $\gal$. We have for instance $g(X)=(1+X)^{\chi_{\text{cycl}}(g)}-1$ if $g \in \gal$ so that $\hal = \ker \chi_{\text{cycl}}$ acts trivially on $\Fp\dpar{X}$ and we get a map $\hal \to \operatorname{Gal}(\Fp\dpar{X}^{\text{sep}} / \Fp\dpar{X})$ which is an isomorphism (this follows from the theory of the ``field of norms'' of \cite{FW79}, see for example theorem 3.1.6 of \cite{F90}). We also get an action of $\Gamma = \gal/\hal$ on $\Fp\dpar{X}$.

If $W$ is an $\Fp$-linear representation of $\gal$ then the $\Fp\dpar{X}$-vector space $\dfont(W) = (\Fp\dpar{X}^{\text{sep}} \otimes_{\Fp} W)^{\hal}$ inherits the frobenius $\phi$ of $\Fp\dpar{X}^{\text{sep}}$ and the residual action of $\Gamma$.

\begin{defi}
A $(\phi,\Gamma)$-\emph{module} over $\Fp\dpar{X}$ is a finite dimensional $\Fp\dpar{X}$-vector space endowed with a semilinear frobenius $\phi$ such that $\Mat(\phi) \in \GL_d(\Fp\dpar{X})$ and a continuous and semilinear action of $\Gamma$ commuting with $\phi$.
\end{defi}

We see that $\dfont(W)$ is then a $(\phi,\Gamma)$-module over $\Fp\dpar{X}$. If $E$ is a finite extension of $\Fp$, we endow it with the trivial $\phi$ and the trivial action of $\Gamma$ so that we may talk about $(\phi,\Gamma)$-modules over $E\dpar{X} = E \otimes_{\Fp} \Fp\dpar{X}$ and we then have the following result which is proved in \S 1.2 of \cite{F90} and whose proof we recall for the convenience of the reader.

\begin{theo}\label{fontequiv}
The functor $W \mapsto \dfont(W)$ gives an equivalence of categories between the category of $E$-representations of $\gal$ and the category of $(\phi,\Gamma)$-modules over $E\dpar{X}$.
\end{theo}

\begin{proof}[Sketch of proof]
Given the isomorphism $\hal \simeq \operatorname{Gal}(\Fp\dpar{X}^{\text{sep}} / \Fp\dpar{X})$, Hilbert's theorem 90 tells us that $\mathrm{H}^1_{\text{discrete}}(\hal, \GL_d(\Fp\dpar{X}^{\text{sep}})) = \{ 1 \}$ if $d \geq 1$ so that if $W$ is an $\Fp$-linear representation of $\hal$ then
\[ \Fp\dpar{X}^{\text{sep}} \otimes_{\Fp} W \simeq  (\Fp\dpar{X}^{\text{sep}})^{\dim(W)} \]
as representations of $\hal$ so that the $\Fp\dpar{X}$-vector space $\dfont(W) = (\Fp\dpar{X}^{\text{sep}} \otimes_{\Fp} W)^{\hal}$ is of dimension $\dim(W)$ and 
$W= (\Fp\dpar{X}^{\text{sep}} \otimes_{\Fp\dpar{X}} \dfont(W))^{\phi=1}$. 

If $\dfont$ is a $(\phi,\Gamma)$-module over $\Fp\dpar{X}$, then let $W(\dfont)=(\Fp\dpar{X}^{\text{sep}} \otimes_{\Fp\dpar{X}} \dfont)^{\phi=1}$. If we choose a basis of $\dfont$ and if $\Mat(\phi)=(p_{ij})_{1 \leq i,j \leq \dim(\dfont)}$ in that basis, then the algebra
\[ A = \Fp\dpar{X}[X_1,\hdots,X_{\dim(\dfont)}] / (X_j^p - \sum_i p_{ij} X_i)_{1 \leq j \leq \dim(\dfont)} \] 
is an \'etale $\Fp\dpar{X}$-algebra of rank $p^{\dim(\dfont)}$ and $W(\dfont)=\operatorname{Hom}_{\Fp\dpar{X}-\text{algebra}}(A,\Fp\dpar{X}^{\text{sep}})$ so that $W(\dfont)$ is an $\Fp$-vector space of dimension $\dim(\dfont)$. 

It is then easy to check that the functors $W \mapsto \dfont(W)$ and $\dfont \mapsto W(\dfont)$ are inverse of each other. Finally, if $E\neq \Fp$ then one can consider an $E$-representation as an $\Fp$-representation with an $E$-linear structure and likewise for $(\phi,\Gamma)$-modules, so that the equivalence carries over.
\end{proof}

We now compute the $(\phi,\Gamma)$-modules associated to certain Galois representations. If $n$ is an integer $\geq 1$, choose $\pi_n \in \Qpbar$ such that $\pi_n^{p^n-1} = -p$. The fundamental character of level $n$ defined in \S 1.7 of \cite{S72}, $\omega_n : \inert \to \Fpbar^\times$ is given by $\omega_n(g) = \overline{g(\pi_n)/\pi_n} \in \Fpbar^\times$ for $g\in \inert$. This definition does not depend on the choice of $\pi_n$ and shows that $\omega_n$ extends to a character $\galpn \to \Fpn^\times$. With this definition, $\omega_n$ is actually the reduction mod $p$ of the Lubin-Tate character associated to the uniformizer $p$ of the field $\Qpn$. 

In order to describe the $(\phi,\Gamma)$-modules associated to irreducible mod $p$ representations, we need to give a ``characteristic $p$'' construction of $\omega_n$. Let $\omega=\omega_1$ be the mod $p$ cyclotomic character and let $Y \in\Fp\dpar{X}^{\text{sep}}$ be an element such that $Y^{(p^n-1)/(p-1)} = X$.  If $g \in \gal$, then $f_g(X) = \omega(g)X/g(X)$ depends only on the image of $g$ in $\Gamma$. Since $f_g(X) \in 1 + X \Fp\dcroc{X}$, the formula $f_g^s(X)$ makes sense if $s \in \Zp$.

\begin{lemm}\label{yon}
If $g \in \galpn$ then $g(Y) = Y \omega_n^p(g) f_g^{-\frac{p-1}{p^n-1}}(X)$. 
\end{lemm}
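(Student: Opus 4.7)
The plan is to reduce the lemma to an equality modulo $(p^n-1)/(p-1)$-th roots of unity by raising to that power, and then identify the correct root.

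First I would apply $g$ to the defining relation $Y^{(p^n-1)/(p-1)}=X$ to obtain
\[ \left(\frac{g(Y)}{Y}\right)^{(p^n-1)/(p-1)} = \frac{g(X)}{X} = \frac{\omega(g)}{f_g(X)}, \]
where the last equality is the definition of $f_g$. Next I would raise the claimed formula $Y\,\omega_n^p(g)\,f_g(X)^{-(p-1)/(p^n-1)}$, divided by $Y$, to the $(p^n-1)/(p-1)$-th power, obtaining $\omega_n(g)^{p(p^n-1)/(p-1)} f_g(X)^{-1}$. Since $g\in\galpn$ forces $\omega_n(g)\in\Fpn^\times$, so $\omega_n(g)^{p^n-1}=1$, the exponent $p(p^n-1)/(p-1)=p+p^2+\cdots+p^n$ reduces mod $p^n-1$ to $1+p+\cdots+p^{n-1}=(p^n-1)/(p-1)$. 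Using $\omega_n^{(p^n-1)/(p-1)}=\omega$, the right-hand side becomes $\omega(g)/f_g(X)$, matching the left-hand side. Hence the two sides of the desired equation have the same $(p^n-1)/(p-1)$-th power, so their ratio is a $(p^n-1)/(p-1)$-th root of unity in $\Fpbar$.

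To promote this to an equality, I would introduce the quantity
\[ \zeta(g) := \frac{g(Y)}{Y}\cdot f_g(X)^{(p-1)/(p^n-1)}\cdot \omega_n^{-p}(g), \]
which takes values in $\mu_{(p^n-1)/(p-1)}\subset\Fpn^\times$. A direct check (using that $g\mapsto g(Y)/Y$ and $g\mapsto f_g(X)$ are both $1$-cocycles, and that $\galpn$ acts trivially on $\Fpn$) shows $\zeta$ is in fact a character $\galpn\to\Fpn^\times$.

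The remaining task — and this is the main obstacle — is to show $\zeta\equiv 1$. Since $\Fpn^\times$ has order prime to $p$, $\zeta$ factors through the tame quotient and is determined by its restriction to $\mathcal{I}_{\Qpn}$. I would handle this by invoking the field-of-norms identification: the element $Y$, satisfying $Y^{(p^n-1)/(p-1)}=X$, corresponds under the equivalence $\hal\simeq \operatorname{Gal}(\Fp\dpar{X}^{\mathrm{sep}}/\Fp\dpar{X})$ to the reduction of the Lubin-Tate uniformizer $\pi_n$ (for the Lubin-Tate group over $\Qpn$ with parameter $p$, whose $p$-torsion satisfies exactly $\pi_n^{p^n-1}=-p$). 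Concretely one checks that on $\mathcal{I}_{\Qpn}$ the quantity $g(Y)/Y$ reduces, modulo elements of positive $X$-valuation, to $\omega_n^p(g)$. The factor of $p$ (rather than $1$) in the exponent reflects the normalization built into Fontaine's field-of-norms construction, where the absolute Frobenius $x\mapsto x^p$ on $\Fp\dpar{X}^{\mathrm{sep}}$ corresponds to a single step on the $\Qp(\mu_{p^\infty})$-tower, whereas the Lubin-Tate Frobenius attached to the uniformizer $p$ of $\Qpn$ is its $n$-th iterate; tracking this discrepancy through the isomorphism is where the exponent $p$ enters. Once this identification on inertia is established, $\zeta$ vanishes on $\mathcal{I}_{\Qpn}$, hence is unramified; combining with $\zeta^{(p^n-1)/(p-1)}=1$ and the fact that the target has no nontrivial unramified characters with this property forces $\zeta\equiv 1$, completing the proof.
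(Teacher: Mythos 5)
Your strategy has the right flavor --- reduce to a root-of-unity ambiguity and identify the ambiguity as a character --- but the way you dispatch that character has a genuine gap.

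The fatal step is the claim that ``the target has no nontrivial unramified characters with this property forces $\zeta\equiv 1$.'' This is false. The group of unramified characters $\galpn \to \Fpn^\times$ is simply $\Fpn^\times$ itself (one records the image of a Frobenius lift), so there are plenty of nontrivial unramified characters of order dividing $(p^n-1)/(p-1)$. Knowing that $\zeta$ is trivial on inertia and satisfies $\zeta^{(p^n-1)/(p-1)}=1$ does not force $\zeta=1$. And this ambiguity is not vacuous for the lemma at hand: the statement asserts equality with $\omega_n^p(g)$ on all of $\galpn$, and $\omega_n$ is specified on $\galpn$ (not just on inertia) as the mod-$p$ reduction of the Lubin--Tate character for the uniformizer $p$ of $\Qpn$. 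Pinning down the unramified behaviour is exactly the content that still has to be proved. A secondary issue: the inertial computation is essentially asserted rather than proved; ``tracking this discrepancy through the isomorphism is where the exponent $p$ enters'' is a plausibility argument, not a derivation.

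The paper's proof works differently and sidesteps both problems by being completely explicit. It unfolds $Y=(y^{(i)})\in\projlim_{x\mapsto x^p}\OO_{\Cp}$, chooses a compatible tower $\pi_{n,j}\in\OO_{\Cp}$ with $\pi_{n,j}^{(p^n-1)/(p-1)}=\zeta_{p^j}-1$ and $\pi_{n,j+1}^p/\pi_{n,j}\in 1+p^{1/p}\OO_{\Cp}$, attaches to each level a character $\omega_{n,j}$ of $\galpn$, and verifies directly from the approximate relations $(\zeta_{p^{j+1}}-1)^p\approx\zeta_{p^j}-1$ and $(\zeta_p-1)^{p-1}\approx -p$ that $\omega_{n,j+1}^p=\omega_{n,j}$ for $j\geq 1$ and $\omega_{n,1}=\omega_n$; passing to the limit then produces the factor $\omega_n^p(g)$, with the unramified part automatically correct because the entire computation takes place on the full $\galpn$. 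If you want to salvage your soft approach you would need a separate argument pinning down $\zeta$ on a Frobenius, for instance by evaluating $g(Y)/Y$ on an explicit Frobenius lift and matching it against the normalization of the Lubin--Tate character; as written, that step is missing.
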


\begin{proof}
Recall that $X \in \et^+ = \projlim \OO_{\Cp}$ is equal to $\eps-1$ where $\eps=(\zeta_{p^j})_{j \geq 0}$ and where $\{\zeta_{p^j}\}_{j \geq 0}$ is a compatible sequence. If $j \geq 1$, pick $\pi_{n,j} \in \OO_{\Cp}$ such that
\[ \pi_{n,j}^{\frac{p^n-1}{p-1}} = \zeta_{p^j}-1. \]
If $g \in \galpn$, then $g(\zeta_{p^j}-1)=[\omega(g)] (\zeta_{p^j}-1)f_g^{-1}(\zeta_{p^j}-1)$ where we also write $f_g(X)$ for $[\omega(g)] X/((1+X)^{\chi_{\text{cycl}}(g)}-1) \in 1+X\Zp\dcroc{X}$ and so there exists $\omega_{n,j}(g) \in \Fpn^\times$ such that
\[ \frac{g(\pi_{n,j})}{\pi_{n,j}} = [\omega_{n,j}(g)] f_g^{-\frac{p-1}{p^n-1}}(\zeta_{p^j}-1), \]
where $[\cdot]$ is the Teichm\"uller lift from $\Fpn^\times$ to $\Qpn^\times$. The map $g \mapsto \omega_{n,j}(g)$ is a character of $\galpn$ which does not depend on the choice of $\pi_{n,j}$. In addition, we have
\[ \begin{cases} (\zeta_{p^{j+1}}-1)^p=(\zeta_{p^j}-1) \cdot (1+\mathrm{O}(p^{1/p})) & \text{if $j \geq 1$,} \\ 
(\zeta_p-1)^{p-1} = -p \cdot (1+\mathrm{O}(p^{1/p}))
\end{cases} \]
so that $\omega_{n,j+1}^p=\omega_{n,j}$ if $j \geq 1$ and $\omega_{n,1} = \omega_n$. This also tells us that we may choose the $\pi_{n,j}$ so that $\pi_{n,j+1}^p / \pi_{n,j} \in 1 + p^{1/p} \OO_{\Cp}$. If we write $Y=(y^{(i)}) \in \projlim \OO_{\Cp}$, then we have $y^{(i)} = \lim_{j \to +\infty} \pi_{n,i+j}^{p^j}$ since the $\pi_{n,j}$ are compatible in the sense that $\pi_{n,j+1}^p / \pi_{n,j} \in 1 + p^{1/p} \OO_{\Cp}$ so that if $g \in \galpn$, then
\[ \frac{g(y^{(i)})}{y^{(i)}} = [\omega_{n,i}(g)] \cdot \lim_{j \to + \infty} (f_g^{-\frac{p-1}{p^n-1}}(\zeta_{p^{i+j}}-1))^{p^j}, \]
and therefore we have $g(Y)=Y \omega_n^p(g) f_g^{-\frac{p-1}{p^n-1}}(X)$ in $\et$.
\end{proof}

If $1 \leq h \leq p^n-2$ is primitive, the characters $\omega_n^h, \omega_n^{ph}, \hdots, \omega_n^{p^{n-1} h}$ of $\inert$ are pairwise distinct. Let $\mu_\lambda$ be the unramified character sending the arithmetic frobenius to $\lambda^{-1}$ (so that later when we normalize class field theory to send the geometric frobenius to $p$ then $\mu_\lambda(p) = \lambda$).

\begin{lemm}\label{lemmomn}
Every absolutely irreducible $n$-dimensional $E$-linear representation of $\gal$ is isomorphic (after possibly enlarging $E$) to $(\ind_{\galpn}^{\gal} \omega_n^h) \otimes \mu_\lambda$ for some primitive $1 \leq h \leq p^n-2$ and some $\lambda \in E^\times$.
\end{lemm}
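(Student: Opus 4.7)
The plan is to analyze the $\gal$-action on $W$ via the standard filtration by inertia and wild inertia. First I would observe that the wild inertia $\mathcal{P} \subset \inert$ is a pro-$p$ subgroup of $\gal$; because $\operatorname{char}(E)=p$, the fixed space $W^{\mathcal{P}}$ is nonzero, and being $\gal$-stable (as $\mathcal{P}$ is normal in $\gal$), it is all of $W$ by irreducibility. Thus $\inert$ acts through its tame quotient $\inert/\mathcal{P}$, whose image in $\GL(W)$ is a finite abelian group of order prime to $p$. After enlarging $E$ to contain all the eigenvalues that appear, I would decompose $W = \bigoplus_\chi W_\chi$ into tame-inertia eigenspaces.

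Next, using the standard fact that conjugation by (arithmetic) Frobenius acts as the $p$-th power on the tame inertia quotient, $\mathrm{Frob}$ permutes the eigenspaces by $W_\chi \mapsto W_{\chi^p}$. Absolute irreducibility then forces a single Frobenius orbit, and since eigenvectors for distinct characters are linearly independent, the orbit has length exactly $n$ and each $W_\chi$ is a line. Any character $\chi$ that occurs satisfies $\chi^{p^n}=\chi$, so it has order dividing $p^n-1$ and is of the form $\omega_n^h$ for a unique $1 \leq h \leq p^n-2$; the condition that $\omega_n^h, \omega_n^{ph}, \hdots, \omega_n^{p^{n-1}h}$ are pairwise distinct is precisely the primitivity of $h$.

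Finally I would recognize $W$ as an induced representation. Pick $v \neq 0$ in $W_{\omega_n^h}$ and set $L = Ev$; it is preserved by $\inert$ (as an eigenspace) and by $\mathrm{Frob}^n$ (which stabilizes this one-dimensional eigenspace), so it is a character of $\galpn = \langle \inert, \mathrm{Frob}^n \rangle$. The vectors $v, \mathrm{Frob}(v), \hdots, \mathrm{Frob}^{n-1}(v)$ form an $E$-basis of $W$, giving $W \simeq \ind_{\galpn}^{\gal} L$. The character $L$ restricted to $\inert$ is $\omega_n^h$, so $L = \omega_n^h \cdot \eta$ for some unramified character $\eta$ of $\galpn$, measured against the reference extension of $\omega_n^h$ to $\galpn$ coming from the Lubin--Tate construction recalled above. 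Writing $\eta(\mathrm{Frob}^n) = \lambda^{-n}$ for some $\lambda \in E^\times$ (after extracting an $n$-th root, enlarging $E$ if necessary), we have $\eta = \mu_\lambda|_{\galpn}$, and the projection formula yields
\[ W \simeq \ind_{\galpn}^{\gal}(\omega_n^h \cdot \mu_\lambda|_{\galpn}) \simeq (\ind_{\galpn}^{\gal} \omega_n^h) \otimes \mu_\lambda. \]
The only real subtlety is bookkeeping: one must use a fixed reference extension of $\omega_n^h$ from $\inert$ to $\galpn$ so that the unramified twist $\mu_\lambda$ is unambiguous. The Lubin--Tate normalization pinned down in the discussion of $\omega_n$ above (and made explicit in lemma \ref{yon}) supplies exactly such a reference, so the identification of $\lambda$ via the action of $\mathrm{Frob}^n$ on the chosen eigenline becomes canonical.
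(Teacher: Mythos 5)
Your proof follows the same route as the paper's, which cites Serre (\emph{Propri\'et\'es galoisiennes\ldots}, \S 1.6) for the tame decomposition and then invokes Frobenius reciprocity to close; you usefully spell out the wild-inertia fixed-point argument, the explicit induced-module basis, the $n$-th-root-of-$\lambda$ bookkeeping, and the reference extension of $\omega_n^h$ to $\galpn$, all of which the paper leaves implicit.

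The one place where your logic is genuinely thin is the sentence claiming that the Frobenius orbit has length exactly $n$ and that each $W_\chi$ is a line ``since eigenvectors for distinct characters are linearly independent.'' Linear independence only tells you the distinct eigenspaces are in direct sum; a priori the orbit could have length $m$ properly dividing $n$, with each $W_\chi$ of dimension $n/m$, and nothing so far forbids this. To rule it out you should note that $W_\chi$ is then a representation of $\mathcal{G}_{\mathbf{Q}_{p^m}}$ on which $\inert$ acts by the single character $\chi$; since $\chi$ is fixed by $\mathrm{Frob}^m$ it extends to a character of $\mathcal{G}_{\mathbf{Q}_{p^m}}$, so $W_\chi$ twisted by the inverse of that extension is a representation of the procyclic group $\mathcal{G}_{\mathbf{Q}_{p^m}}/\inert$, whose absolutely irreducible constituents over $\overline{E}$ are one-dimensional. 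Taking such a line and inducing to $\gal$ produces a nonzero $\gal$-map onto $W$ (by Frobenius reciprocity and irreducibility of $W$), forcing $\dim W \leq m$; combined with $\dim W = m \cdot \dim W_\chi$ this gives $m = n$ and $\dim W_\chi = 1$. The paper's citation of Serre covers this point, but since you are rederiving the decomposition from scratch you need to supply this step rather than attribute it to linear independence.
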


\begin{proof}
If $W$ is such a representation then by \S 1.6 of \cite{S72}, we may extend $E$ so that $W|_{\inert}$ splits as a direct sum of $n$ tame characters and since $W$ is irreducible, these characters are  transitively permuted by frobenius so that they are of level $n$ and there exists a primitive $h$ such that $W = \oplus_{i=0}^{n-1} W_i$ where $\inert$ acts on $W_i$ by $\omega_n^{p^i h}$. Since $\omega_n$ extends to $\galpn$ each $W_i$ is stable under $\galpn$ which then acts on it by $\omega_n^{p^i h} \chi_i$ where $\chi_i$ is an unramified character of $\galpn$. The lemma then follows from Frobenius reciprocity. 
\end{proof}

If $\lambda \in \Fpbar^\times$ is such that $\lambda^n \in \Fp^\times$, let $W_\lambda = \{ \alpha \in \Fpbar$ such that $\alpha^{p^n} = \lambda^{-n} \alpha\}$ so that $W_\lambda$ is a $\Fpn$-vector space of dimension $1$ and hence a $\Fp$-vector space of dimension $n$. By composing the map $\Gal(\Qp^\text{nr}(\pi_n) / \Qp) \xrightarrow{\sim} \Fpn^\times \rtimes \hat{\ZZ}$ with the map $\Fpn^\times \rtimes \hat{\ZZ} \to \operatorname{End}_{\Fp}(W_\lambda)$ given by $(x,0) \mapsto m_x^h$ (where $m_x$ is the multiplication by $x$ map) and by $(1,1) \mapsto (\alpha \mapsto \alpha^p)$ we get an $n$-dimensional $\Fp$-linear representation of $\gal$ which is isomorphic to $(\ind_{\galpn}^{\gal} \omega_n^h) \otimes \mu_\lambda$ after extending scalars and whose determinant is $\omega^h\mu_{-1}^{n-1} \mu_{\lambda}^n$ so that if $\lambda^n=(-1)^{n-1}$ then the determinant is $\omega^h$ and we call $\ind(\omega_n^h)$ the representation thus constructed; it is then uniquely determined by the two conditions $\det \ind(\omega_n^h) = \omega^h$ and $\ind(\omega_n^h) |_{\inert} =  \oplus_{i=0}^{n-1} \omega_n^{p^i h}$ since $(\ind_{\galpn}^{\gal} \omega_n^h) \otimes \mu_{\lambda_1}= (\ind_{\galpn}^{\gal} \omega_n^h) \otimes \mu_{\lambda_2}$ if and only if we have $\lambda_1^n=\lambda_2^n$. 

\begin{coro}\label{twistomn}
Every absolutely irreducible $n$-dimensional $E$-linear representation of $\gal$ is isomorphic to $\ind(\omega_n^h) \otimes \mu_\lambda$ for some primitive $1 \leq h \leq p^n-2$ and some $\lambda \in \Fpbar^\times$ such that $\lambda^n \in E^\times$.
\end{coro}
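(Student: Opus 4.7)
The plan is to combine Lemma \ref{lemmomn} with the explicit comparison between $\ind(\omega_n^h)$ and the compact induction $\ind_{\galpn}^{\gal} \omega_n^h$ given just before the statement, and then to extract the integrality information on $\lambda^n$ from the fact that $W$ has an $E$-form.

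First, by Lemma \ref{lemmomn}, after possibly enlarging $E$ to some $E'$, there is a primitive $1 \leq h \leq p^n-2$ and some $\mu \in (E')^\times$ with
\[ W \simeq (\ind_{\galpn}^{\gal} \omega_n^h) \otimes \mu_\mu. \]
On the other hand, by the construction recalled just before the corollary, $\ind(\omega_n^h) \simeq (\ind_{\galpn}^{\gal} \omega_n^h) \otimes \mu_{\lambda_0}$ for any $\lambda_0 \in \Fpbar^\times$ with $\lambda_0^n = (-1)^{n-1}$, so that $\ind(\omega_n^h) \otimes \mu_\lambda \simeq (\ind_{\galpn}^{\gal} \omega_n^h) \otimes \mu_{\lambda_0 \lambda}$ for every $\lambda \in \Fpbar^\times$. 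Applying the uniqueness criterion quoted in the same paragraph, namely that $(\ind_{\galpn}^{\gal} \omega_n^h) \otimes \mu_{\lambda_1} \simeq (\ind_{\galpn}^{\gal} \omega_n^h) \otimes \mu_{\lambda_2}$ if and only if $\lambda_1^n = \lambda_2^n$, one sees that $W \simeq \ind(\omega_n^h) \otimes \mu_\lambda$ if and only if $(\lambda_0 \lambda)^n = \mu^n$, i.e.\ if and only if $\lambda^n = (-1)^{n-1} \mu^n$. We therefore choose $\lambda \in \Fpbar^\times$ to be any $n$-th root of $(-1)^{n-1} \mu^n$.

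It remains to verify that $\lambda^n$ actually lies in the original $E^\times$, even though a priori $\mu$ only lies in $(E')^\times$. This is the one step that requires a small argument rather than bookkeeping: in the basis of $\inert$-eigenvectors constructed in the paragraph before the corollary, Frobenius $\operatorname{Frob}_{\Qp}$ acts on $\ind(\omega_n^h) \otimes \mu_\lambda$ by a cyclic permutation matrix whose single nonzero diagonal-corner entry is $(-1)^{n-1}\lambda^{-1}$, so its characteristic polynomial equals $T^n - (-1)^{n-1}\lambda^{-n}$. Since $W$ has an $E$-structure, the characteristic polynomial of $\operatorname{Frob}_{\Qp}$ on $W \simeq \ind(\omega_n^h) \otimes \mu_\lambda$ has coefficients in $E$, so $(-1)^{n-1}\lambda^{-n} \in E$ and hence $\lambda^n \in E^\times$, completing the proof.

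The main obstacle in this plan is not the algebraic manipulation, which is essentially forced by the two structural facts already proved, but rather the coefficient-field bookkeeping: Lemma \ref{lemmomn} allows an enlargement of $E$, whereas the corollary asserts $\lambda^n \in E^\times$ for the original $E$. The characteristic polynomial argument in the last step is what bridges this gap and is the only place where genuine input about the $E$-rationality of $W$ is needed.
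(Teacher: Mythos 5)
Your proof is correct and fills in what the paper leaves implicit (the paper gives no explicit proof of the corollary, treating it as an immediate consequence of Lemma~\ref{lemmomn} and the construction of $\ind(\omega_n^h)$ in the preceding paragraph). Your first two paragraphs are exactly the intended bookkeeping: combine Lemma~\ref{lemmomn} with the identity $\ind(\omega_n^h) = (\ind_{\galpn}^{\gal}\omega_n^h) \otimes \mu_{\lambda_0}$ (where $\lambda_0^n = (-1)^{n-1}$) and the stated uniqueness criterion $(\ind_{\galpn}^{\gal}\omega_n^h)\otimes\mu_{\lambda_1} \simeq (\ind_{\galpn}^{\gal}\omega_n^h)\otimes\mu_{\lambda_2}$ iff $\lambda_1^n = \lambda_2^n$.

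For the $E$-rationality of $\lambda^n$, your characteristic-polynomial argument works, but there is a slightly more economical route that the paper in effect hands you: the paragraph before the corollary already records that $\det\bigl(\ind(\omega_n^h)\otimes\mu_\lambda\bigr) = \omega^h \mu_{\lambda^n}$. Since $W$ is an $E$-linear representation, its determinant character takes values in $E^\times$, and since $\omega^h$ is $\Fp^\times$-valued, $\mu_{\lambda^n}$ must be $E^\times$-valued, i.e.\ $\lambda^n \in E^\times$. Your computation of the full characteristic polynomial of Frobenius gives the same conclusion (the determinant is, up to sign, its constant term) but requires more care about lifts of Frobenius and the choice of basis.

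One small imprecision worth fixing: you describe the Frobenius matrix on $\ind(\omega_n^h)\otimes\mu_\lambda$ as ``a cyclic permutation matrix whose single nonzero diagonal-corner entry is $(-1)^{n-1}\lambda^{-1}$''. Read literally (all other nonzero entries equal to $1$), such a matrix has characteristic polynomial $T^n - (-1)^{n-1}\lambda^{-1}$, not $T^n - (-1)^{n-1}\lambda^{-n}$. What is actually true is that Frobenius acts by $\lambda^{-1}$ times the companion matrix of $\ind(\omega_n^h)$, so \emph{every} nonzero entry carries a factor of $\lambda^{-1}$ and the corner carries the extra $(-1)^{n-1}$; the product of the $n$ nonzero entries is then $(-1)^{n-1}\lambda^{-n}$, giving the characteristic polynomial $T^n - (-1)^{n-1}\lambda^{-n}$ that you (correctly) state.
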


\begin{theo}\label{pgomegan}
The $(\phi,\Gamma)$-module $\dfont(\ind(\omega_n^h))$ is defined over $\Fp\dpar{X}$ and admits a basis $e_0, \hdots, e_{n-1}$ in which $\gamma(e_j) =  f_\gamma(X) ^{hp^j(p-1)/(p^n-1)} e_j$ if $\gamma \in \Gamma$ and $\phi(e_j) = e_{j+1}$ for $0 \leq j \leq n-2$ and $\phi(e_{n-1}) = (-1)^{n-1} X^{-h(p-1)} e_0$.
\end{theo}

\begin{proof} 
Let $W$ be the $\Fp$-representation of $\gal$ associated to the $(\phi,\Gamma)$-module described in the theorem. If $f = X^h e_0 \wedge \hdots \wedge e_{n-1}$, then $\phi(f)=f$ and $\gamma(f) = \omega(\gamma)^h f$ so that the determinant of $W$ is indeed $\omega^h$ and therefore we only need to show that the restriction of $\Fpn \otimes_{\Fp} W$ to $\inert$ is $\omega_n^h \oplus \omega_n^{ph} \oplus \cdots \oplus \omega_n^{p^{n-1} h}$. To clarify things, let us write $\Fpn^{\natural}$ for $\Fpn$ when it occurs as a coefficient field, so that $\phi$ is trivial on $\Fpn^{\natural}$.

If we write $\Fpn^{\natural} \otimes_{\Fp} \Fp\dpar{X}^{\text{sep}}$ as $\prod_{k=0}^{n-1} \Fp\dpar{X}^{\text{sep}}$ via the map $x \otimes y \mapsto (\sigma^k(x)y)$ where $\sigma$ is the absolute frobenius on $\Fpn^{\natural}$, then given $(x_0,\hdots,x_{n-1}) \in \prod_{k=0}^{n-1} \Fp\dpar{X}^{\text{sep}}$, we have
\begin{align*}
\phi((x_0,\hdots,x_{n-1})) & =(\phi(x_{n-1}),\phi(x_0),\hdots,\phi(x_{n-2})) \\ 
g((x_0,\hdots,x_{n-1}))& =(g(x_0),\hdots,g(x_{n-1})),
\end{align*} 
if $g \in \galpn$ (but not if $g \in \gal$). Choose some $\alpha \in \Fp\dpar{X}^{\text{sep}}$ such that $\alpha^{p^n-1} = (-1)^{n-1}$ and define
\begin{align*}
v_0 & = (\alpha Y^h,0,\hdots,0) \cdot e_0 + (0, \alpha^p Y^{ph}, \hdots,0) \cdot e_1 + \cdots (0,\hdots,0,\alpha^{p^{n-1}} Y^{p^{n-1}h}) \cdot e_{n-1} \\
v_1 & = (0,\alpha Y^h,\hdots,0) \cdot e_0 + (0,0,  \alpha^p Y^{ph}, \hdots,0) \cdot e_1 + \cdots (\alpha^{p^{n-1}} Y^{p^{n-1}h},0,\hdots,0) \cdot e_{n-1} \\
 & \vdots \\
v_{n-1} &= (0,\hdots,0,\alpha Y^h) \cdot e_0 + (\alpha^p Y^{ph},0, \hdots,0) \cdot e_1 + \cdots (0,\hdots,0,\alpha^{p^{n-1}} Y^{p^{n-1}h},0) \cdot e_{n-1}.
\end{align*}
The vectors $v_0,\hdots,v_{n-1}$ give a basis of $\Fpn^{\natural} \otimes_{\Fp} (\Fp\dpar{X}^{\text{sep}} \otimes_{\Fp\dpar{X}} \dfont(W))$ and the formulas for the action of $\phi$ imply that $\phi(v_j)=v_j$ so that $v_j \in \Fpn^{\natural} \otimes_{\Fp} W$. The formulas for the action of $\Gamma$ and lemma \ref{yon} imply that $g(v_j) = \omega_n^{hp^{1-j}} v_j$ if $g \in \inert$ which finishes the proof.
\end{proof}

\Subsection{From Galois to Borel}\label{colmez}
If $\alpha(X) \in E\dpar{X}$ then we can write
\[ \alpha(X) = \sum_{j=0}^{p-1} (1+X)^j \alpha_j(X^p) \] 
in a unique way, and we define a map $\psi : E\dpar{X} \to E\dpar{X}$ by the formula $\psi(\alpha)(X) = \alpha_0(X)$. A direct computation shows that if $0 \leq r \leq p-1$ then $\psi(X^{pm+r}) = (-1)^r X^m$. If $\dfont$ is a $(\phi,\Gamma)$-module over $E\dpar{X}$ and if $y \in \dfont$ then likewise we can write $y= \sum_{j=0}^{p-1} (1+X)^j \phi(y_j)$ and we set $\psi(y) = y_0$. The operator $\psi$ thus defined commutes with the action of $\Gamma$ and satisfies $\psi(\alpha(X) \phi(y)) = \psi(\alpha)(X) y$ and $\psi(\alpha(X^p)y) = \alpha(X) \psi(y)$. 

If $W = \ind(\omega_n^h) \otimes \chi$ with $\chi = \omega^s \mu_{\lambda}$ where from now on $\lambda \in E^\times$, then theorem \ref{pgomegan} above implies that the $(\phi,\Gamma)$-module $\dfont(W)$ is defined on $E\dpar{X}$ and admits a basis $e_0, \hdots, e_{n-1}$ in which $\gamma(e_j) =  \omega^s(\gamma) f_\gamma(X) ^{hp^j(p-1)/(p^n-1)} e_j$ if $\gamma \in \Gamma$ and $\phi(e_j) = \lambda e_{j+1}$ for $0 \leq j \leq n-2$ and $\phi(e_{n-1}) = (-1)^{n-1} \lambda X^{-h(p-1)} e_0$. Since $\omega_n^{(p^n-1)/(p-1)} = \omega$, we can always modify $h$ (and $\chi$ accordingly) in order to have $1 \leq h \leq (p^n-1)/(p-1) -1$ so that $h(p-1) \leq p^n-2$. Recall that $i_{n-1} \hdots i_1 i_0$ is the expansion of $h(p-1)$ in base $p$ and that $h_k = i_{n-k}  + p i_{n-k+1} + \cdots + p^{k-1} i_{n-1}$ so that $h_0=0$ and $h_n=h(p-1)$. 

\begin{lemm}\label{psiomeg}
If $f_j  = X^{h_j} e_j $ and $\alpha(X) \in E \dpar{X}$, then we have
\[ \psi(\alpha(X) f_j) = \begin{cases}
\lambda^{-1} \psi(\alpha(X) X^{i_{n-j}}) f_{j-1} & \text{if $j \geq 1$,} \\
\lambda^{-1} (-1)^{n-1} \psi(\alpha(X) X^{i_0}) f_{n-1} & \text{if $j = 0$.}
\end{cases} \]
\end{lemm}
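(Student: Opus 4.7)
The plan is to express each $f_j$ as $\phi$ applied to the ``next'' basis vector, so that the fundamental identity $\psi(\alpha(X)\phi(y))=\psi(\alpha)(X)\,y$ can be invoked. Throughout, I will use that $\phi$ on $E\dpar{X}$ sends $X^a$ to $X^{pa}$ (since $E$ sits in characteristic $p$ with $\phi$ trivial on it), together with the recursion $h_j = p h_{j-1} + i_{n-j}$ from the definition of the digits $i_\bullet$.

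First I would treat the case $j \geq 1$. Since $\phi(e_{j-1}) = \lambda e_j$, we have $e_j = \lambda^{-1}\phi(e_{j-1})$, and using $X^{h_j} = X^{i_{n-j}} \cdot X^{ph_{j-1}} = X^{i_{n-j}} \phi(X^{h_{j-1}})$ we rewrite
\[
\alpha(X) f_j = \alpha(X) X^{h_j} e_j = \lambda^{-1}\, \alpha(X) X^{i_{n-j}} \phi\bigl(X^{h_{j-1}} e_{j-1}\bigr) = \lambda^{-1}\, \alpha(X) X^{i_{n-j}} \phi(f_{j-1}).
\]
Applying $\psi$ and using $\psi(\beta(X)\phi(y)) = \psi(\beta)(X)\, y$ with $\beta(X) = \alpha(X) X^{i_{n-j}}$ and $y = f_{j-1}$ immediately gives the first case of the lemma.

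For $j=0$, I would invert the relation $\phi(e_{n-1}) = (-1)^{n-1}\lambda X^{-h(p-1)} e_0$ into $e_0 = (-1)^{n-1}\lambda^{-1} X^{h(p-1)} \phi(e_{n-1})$. Since $h_0=0$ and $h(p-1)=h_n = p h_{n-1} + i_0$, we have $X^{h(p-1)} = X^{i_0} \phi(X^{h_{n-1}})$, so
\[
\alpha(X) f_0 = \alpha(X) e_0 = (-1)^{n-1}\lambda^{-1}\, \alpha(X) X^{i_0}\, \phi\bigl(X^{h_{n-1}} e_{n-1}\bigr) = (-1)^{n-1}\lambda^{-1}\, \alpha(X) X^{i_0}\, \phi(f_{n-1}).
\]
Applying $\psi$ again yields the second formula. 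There is really no obstacle here: the only thing to be careful about is the bookkeeping of the twist by $X^{h_j}$, which is precisely what the definition $h_j = ph_{j-1} + i_{n-j}$ is designed to absorb, turning the $X^{h_j}$ factor into $X^{i_{n-j}}$ outside $\phi$ plus a $\phi(X^{h_{j-1}})$ that combines with $\phi(e_{j-1})$ to give $\phi(f_{j-1})$.
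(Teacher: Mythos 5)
Your proof is correct and is essentially the paper's argument: in each case you express $X^{h_j}e_j$ as $\lambda^{-1}\alpha(X)X^{i_{n-j}}\phi(f_{j-1})$ (or the $(-1)^{n-1}$-twisted analogue for $j=0$) via the digit recursion $h_j=ph_{j-1}+i_{n-j}$, then apply $\psi(\beta(X)\phi(y))=\psi(\beta)(X)\,y$. The paper writes the same computation with one intermediate step (keeping $X^{h_{j-1}}$ and $e_{j-1}$ separate before recombining into $f_{j-1}$), but the reasoning is identical.
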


\begin{proof}
If $j \geq 1$, then we can write $\alpha(X) f_j = \lambda^{-1} \alpha(X) X^{h_j} \phi(e_{j-1})$ and since $h_j = ph_{j-1} + i_{n-j}$, we have
\[ \psi(\alpha(X) f_j ) = \lambda^{-1} X^{h_{j-1}} \psi(\alpha(X) X^{i_{n-j}}) e_{j-1} = \lambda^{-1} \psi(\alpha(X) X^{i_{n-j}}) f_{j-1}. \]
If $j=0$, then $\alpha(X) f_0 = \alpha(X)e_0 = \alpha(X) (-1)^{n-1} \lambda^{-1} X^{h(p-1)} \phi(e_{n-1})$ so that
\[ \psi(\alpha(X) f_0 ) = \lambda^{-1} (-1)^{n-1} X^{h_{n-1}} \psi(\alpha(X) X^{i_{0}}) e_{n-1} = \lambda^{-1} (-1)^{n-1} \psi(\alpha(X) X^{i_0}) f_{n-1} \]
which finishes the proof.
\end{proof}

\begin{coro}\label{ddiese}
The $E\dcroc{X}$-module $\ddiese(W) = \oplus_{j=0}^{n-1} E\dcroc{X} \cdot f_j$ is stable under $\psi$ and the map $\psi : \ddiese(W) \to \ddiese(W)$ is surjective.
\end{coro}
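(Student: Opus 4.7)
The plan is to deduce both claims from Lemma \ref{psiomeg} together with the elementary fact that $\psi$ sends $E\dcroc{X}$ into itself. Indeed, since $\psi(X^{pm+r}) = (-1)^r X^m$ for $0 \leq r \leq p-1$, positive powers of $X$ go to non-negative powers of $X$, so $\psi(E\dcroc{X}) \subseteq E\dcroc{X}$; moreover $\psi$ is surjective on $E\dcroc{X}$, a section being $\phi : \beta(X) \mapsto \beta(X^p)$.

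For stability, I take $\alpha(X) \in E\dcroc{X}$ and use Lemma \ref{psiomeg}. The key observation is that each $i_{n-j}$ is a base-$p$ digit of $h(p-1)$, hence $0 \leq i_{n-j} \leq p-1$; in particular $X^{i_{n-j}} \in E\dcroc{X}$, so $\alpha(X) X^{i_{n-j}} \in E\dcroc{X}$, and $\psi(\alpha(X) X^{i_{n-j}}) \in E\dcroc{X}$ by the remark above. Therefore $\psi(\alpha(X) f_j) \in E\dcroc{X} \cdot f_{j-1} \subseteq \ddiese(W)$ for $j \geq 1$, and similarly $\psi(\alpha(X) f_0) \in E\dcroc{X} \cdot f_{n-1} \subseteq \ddiese(W)$.

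For surjectivity, I need to show every $\beta(X) f_{j'}$ with $\beta \in E\dcroc{X}$ is hit. Set $j = j'+1$ if $j' \leq n-2$ and $j = 0$ if $j' = n-1$, and let $r$ denote $i_{n-j}$ in the first case and $i_0$ in the second, so $0 \leq r \leq p-1$. Given $\beta(X) \in E\dcroc{X}$, define
\[ \alpha(X) = \epsilon \lambda (-1)^{r} \beta(X^p) \in E\dcroc{X}, \]
where $\epsilon = 1$ if $j \geq 1$ and $\epsilon = (-1)^{n-1}$ if $j = 0$. Using the identity $\psi(\phi(\beta) \cdot \eta) = \beta \psi(\eta)$ together with $\psi(X^r) = (-1)^r$ for $0 \leq r \leq p-1$, one finds
\[ \psi(\alpha(X) X^r) = \epsilon \lambda (-1)^r \beta(X) \psi(X^r) = \epsilon \lambda \beta(X), \]
and Lemma \ref{psiomeg} then gives $\psi(\alpha(X) f_j) = \beta(X) f_{j'}$, as desired.

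No step here is subtle; the only point requiring care is making sure the signs $(-1)^r$ and the factor $(-1)^{n-1}$ coming from $\phi(e_{n-1}) = (-1)^{n-1}\lambda X^{-h(p-1)} e_0$ are tracked correctly across the two cases $j \geq 1$ and $j = 0$. The bound $i_k \leq p-1$ on the base-$p$ digits is exactly what makes $\psi$ behave well on $X^{i_k} \cdot E\dcroc{X}$ and is the reason why the definition $f_j = X^{h_j} e_j$ (rather than just $e_j$) is the right one to isolate an integral, $\psi$-stable lattice.
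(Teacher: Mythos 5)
Your proof is correct and follows the same route as the paper: Lemma~\ref{psiomeg} gives stability (after observing that the digits $i_k$ lie in $\{0,\hdots,p-1\}$ so $\psi$ keeps you inside $E\dcroc{X}$), and surjectivity is obtained from the formula $\psi(X^{pm+r})=(-1)^r X^m$. You simply write down the explicit preimage $\alpha(X)=\epsilon\lambda(-1)^r\beta(X^p)$ where the paper merely asserts surjectivity of $\alpha\mapsto\psi(\alpha X^{i})$; this is a fair unwinding, not a different argument.
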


\begin{proof}
Lemma \ref{psiomeg} implies that $\ddiese(W)$ is stable under $\psi$. Furthermore, the formula $\psi(X^{pm+r}) = (-1)^r X^m$ for $0 \leq r \leq p-1$ implies that the map $\alpha_j(X) \mapsto \psi(\alpha_j(X) X^{i_{n-j}})$ is surjective for $j \geq 1$, as well as the map $\alpha_0(X) \mapsto \psi(\alpha_0(X) X^{i_0})$, which implies that $\psi : \ddiese(W) \to \ddiese(W)$ is surjective.
\end{proof}

A quick computation shows that if $y \in \ddiese(W)$ then $\psi^n(X^{-1} y) \in \ddiese(W)$ so that our $\ddiese(W)$ coincides with the lattice defined by Colmez in proposition II.4.2 of \cite{CPG} by item (iv) of that proposition. We now define Colmez' functor (see \S III of \cite{CPG}):
\[ \projlim_{\psi} \ddiese(W) = \text{$\{ y = (y_0,y_1,\hdots)$ with $y_i \in \ddiese(W)$ such that $\psi(y_{i+1}) = y_i$ for all $i\geq 0 \}$,} \] 
and we endow this space with an action of $\B$ (using the same normalization as in \cite{LB5} which differs by a twist from the normalization of \cite{CPG})
\begin{align*}
\left( \begin{pmatrix} x & 0 \\ 0 & x \end{pmatrix} \cdot y \right)_i
& = (\omega^{h-1} \chi^2)^{-1}(x) y_i;  \\
\left( \begin{pmatrix} 1 &  0 \\ 0 & p^j  
\end{pmatrix} \cdot y \right)_i & = y_{i-j} = \psi^j(y_i); \\
\left( \begin{pmatrix} 1 &  0 \\ 0 & a  
\end{pmatrix} \cdot y \right)_i & = \gamma_{a^{-1}}(y_i), 
\text{ where $\gamma_{a^{-1}} \in \Gamma$ is such that $\chi_{\text{cycl}}(\gamma_{a^{-1}}) = a^{-1} \in \Zp^\times$;}\\
\left( \begin{pmatrix} 1 &  z \\ 0 & 1  
\end{pmatrix} \cdot y \right)_i & = 
\psi^j((1+X)^{p^{i+j} z} y_{i+j}),\text{ for $i+j \geq -{\rm val}(z)$.}
\end{align*}
We then define $\Omega(W) = (\projlim_{\psi} \ddiese(W))^*$ so that $\Omega(W)$ is a smooth representation (see \S\ref{duality} for a proof of this) of $\B$ whose central character is $\omega^{h-1} \chi^2$. Denote by $\theta_0$ the linear form on $\ddiese(W)$ given by
\[ \theta_0 : \alpha_0(X)f_0 + \cdots + \alpha_{n-1}(X)f_{n-1} \mapsto \alpha_0(0). \] 
If $y = (y_0,y_1,\hdots)$,  then we define $\theta \in \Omega(W)$ to be the linear form $\theta : y \mapsto \theta_0(y_0)$. 

\begin{lemm}\label{acbormu}
If $\smat{ a & b \\ 0 & d} \in \K \Z$, then $\smat{ a & b \\ 0 & d} \cdot \theta = \omega^{h-1}(a) \chi(ad) \theta$.
\end{lemm}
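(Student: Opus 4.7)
My plan is to compute $\smat{a & b \\ 0 & d}\cdot\theta$ by decomposing
\[ \pmat{a & b \\ 0 & d} = \pmat{1 & b/d \\ 0 & 1} \pmat{a & 0 \\ 0 & a} \pmat{1 & 0 \\ 0 & d/a}, \]
call these factors $u$, $z_a$, $s$ respectively. Since $a,d\in\Zp^\times$ and $b\in\Zp$ we have $b/d\in\Zp$ and $d/a\in\Zp^\times$, so each of the three explicit formulas from \S\ref{colmez} applies directly. The action on $\Omega(W)=(\projlim_\psi\ddiese(W))^*$ is contragredient: that this is the correct convention is forced by the statement that the central character is $\omega^{h-1}\chi^2$, since the formula for the center gives $(z_a\cdot y)_i=(\omega^{h-1}\chi^2)^{-1}(a)y_i$. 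So I will compute $(\smat{a & b \\ 0 & d}\theta)(y)=\theta(s^{-1}z_a^{-1}u^{-1}y)$.

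Applying the three formulas successively to the zeroth component: $u^{-1}$ multiplies $y_0$ by $(1+X)^{-b/d}$; the central $z_a^{-1}$ multiplies by the scalar $(\omega^{h-1}\chi^2)(a)$; and $s^{-1}$ applies $\gamma_{d/a}\in\Gamma$ with $\chi_{\text{cycl}}(\gamma_{d/a})=d/a$. Using $\gamma_{d/a}(1+X)=(1+X)^{d/a}$ I obtain
\[ (s^{-1}z_a^{-1}u^{-1}y)_0 = (\omega^{h-1}\chi^2)(a)\,(1+X)^{-b/a}\,\gamma_{d/a}(y_0). \]

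Now evaluate $\theta_0$. Since $\gamma_{d/a}$ preserves the decomposition $\ddiese(W)=\bigoplus_j E\dcroc{X}f_j$ and $f_0=e_0$ (because $h_0=0$), only the $f_0$-component of $y_0=\sum_j\alpha_j(X)f_j$ contributes, giving coefficient
\[ \omega^s(\gamma_{d/a})\,\gamma_{d/a}(\alpha_0(X))\,f_{\gamma_{d/a}}(X)^{h(p-1)/(p^n-1)}. \]
Multiplying by $(1+X)^{-b/a}$ and evaluating at $X=0$, the three power-series factors $(1+X)^{-b/a}$, $f_{\gamma_{d/a}}(X)^{\cdots}$, and $\gamma_{d/a}(\alpha_0(X))/\alpha_0(0)$ all lie in $1+XE\dcroc{X}$, so their constant terms are $1$. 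I therefore get
\[ (\smat{a & b \\ 0 & d}\theta)(y) = (\omega^{h-1}\chi^2)(a)\cdot\omega^s(d/a)\cdot\theta(y). \]

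Finally, since $\chi=\omega^s\mu_\lambda$ and the unramified factor $\mu_\lambda$ is trivial on $\Zp^\times$, on units $\chi$ coincides with $\omega^s$, so $\omega^s(d/a)=\chi(d)\chi(a)^{-1}$, and the scalar collapses to $\omega^{h-1}(a)\chi(a)^2\cdot\chi(d)\chi(a)^{-1}=\omega^{h-1}(a)\chi(ad)$, which is what the lemma claims. The only real subtlety is the bookkeeping for the contragredient convention and the verification that each of the power series intervening (the twist of $(1+X)^{-b/d}$, the $f_\gamma$-factor in the $\Gamma$-action on $e_0$, and the action of $\gamma_{d/a}$ on $\alpha_0$) contributes only its constant term $1$ when $\theta_0$ is applied.
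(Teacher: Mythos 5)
Your proof is correct and follows essentially the same route as the paper: the paper factors the inverse matrix as $\smat{a^{-1} & 0 \\ 0 & a^{-1}}\smat{1 & -bd^{-1} \\ 0 & ad^{-1}}$ and reads off the scalar, while you factor the original matrix into unipotent $\times$ central $\times$ diagonal and invert, which is the same bookkeeping. The only added value in your write-up is that you make explicit the step the paper leaves implicit, namely that the power series $(1+X)^{-b/a}$, $f_{\gamma_{d/a}}(X)^{h(p-1)/(p^n-1)}$ and $\gamma_{d/a}(\alpha_0(X))/\alpha_0(0)$ all have constant term $1$; one small imprecision is the phrase ``$a,d\in\Zp^\times$,'' which holds only for $\K$, not $\K\Z$, but the facts you actually use ($b/d\in\Zp$, $d/a\in\Zp^\times$, $\chi=\omega^s$ on $\Zp^\times$) are valid for all of $\K\Z$, so the argument stands.
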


\begin{proof}
We have
\begin{align*} 
\left(\pmat{ a & b \\ 0 & d} \cdot \theta\right)(y) 
& = \theta \left( \pmat{ a^{-1} & -b a^{-1} d^{-1} \\ 0 & d^{-1}} \cdot y \right) \\
& = \theta \left( \pmat{ a^{-1} & 0 \\ 0 & a^{-1}} \pmat{ 1 & -b d^{-1} \\ 0 & ad^{-1}} \cdot y \right) \\
& = (\omega^{h-1} \chi^2)(a) \omega^s(a^{-1}d) \theta(y) \\
&=  \omega^{h-1}(a) \chi(ad) \theta(y),
\end{align*}
since $\mu_\lambda(a) = \mu_\lambda(d)$ because $\smat{ a & b \\ 0 & d} \in \K \Z$ so that $\chi(a) = \chi(d) \omega^s(ad^{-1})$.
\end{proof}

For $0 \leq k \leq n$, recall that $h_k = i_{n-k}  + p i_{n-k+1} + \cdots + p^{k-1} i_{n-1}$ so that $h_n=h(p-1)$.

\begin{theo}\label{nultheta}
The linear form $\theta$ is killed by 
\[ (-1)^{n-1}  \lambda^n \cdot \Id - \sum_{j=0}^{p^n-1} \binom{j}{h(p-1)} \pmat{p^n & -j \\ 0 & 1}. \]
\end{theo}

\begin{proof}
Using the definition of the action of $\B$ on $\projlim_{\psi} \ddiese(W)$, we get
\begin{multline*} 
\left((-1)^{n-1}  \lambda^n \cdot \theta - \sum_{j=0}^{p^n-1} \binom{j}{h(p-1)} \pmat{p^n & -j \\ 0 & 1} \theta \right)(y)  \\
= (-1)^{n-1}  \lambda^n \cdot \theta_0(y_0) -  \lambda^{2n}  \cdot \theta_0 \circ \psi^n \left( \sum_{j=0}^{p^n-1} \binom{j}{h(p-1)} (1+X)^j y_0 \right), 
\end{multline*}
and this is equal to $0$ for obvious reasons if $y_0=\alpha_i(X)f_i$ with $i \neq 0$ so that we now assume that $y_0 = \alpha_0(X)f_0$. Lemma \ref{prbin} implies that
\[ \sum_{j=0}^{p^n-1} \binom{j}{h(p-1)} (1+X)^j \in  X^{p^n-h_n-1} + X^{p^n-h_n} E\dcroc{X}, \]
and the fact that $p^\ell-h_\ell+i_{n-\ell} =p(p^{\ell-1}-h_{\ell-1})$ for $1 \leq \ell \leq n$ together with the formulas of lemma \ref{psiomeg} and the fact that $\psi(X^{pm+r}) = (-1)^r X^m$ then imply that
\[ \psi^n \left( \sum_{j=0}^{p^n-1} \binom{j}{h(p-1)} (1+X)^j  \alpha_0(X)f_0 \right) \equiv (-1)^{n-1}  \lambda^{-n}  \alpha_0(X) f_0 \mod{X \ddiese(W)}, \]
which proves our claim.
\end{proof}

\Subsection{Profinite representations and smooth representations}
\label{duality}
In this paragraph, we prove that $\Omega(W)$ is a smooth irreducible representation of $\B$ if $\dim(W) \geq 2$. In order to do so, we recall a few results concerning profinite representations and their dual. Let $G$ be a topological group and let $X$ be a profinite $E$-linear representation of $G$ where $E$ is as before a finite extension of $\Fp$. Let $X^*$ be the dual of $X$, that is the set of continuous linear forms on $X$.

\begin{lemm}\label{dualsm}
The representation $X^*$ is a smooth representation of $G$.
\end{lemm}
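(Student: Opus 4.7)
The plan is to show directly that for every $\theta \in X^*$ the stabilizer in $G$ is open. Since $E$ is discrete and $\theta : X \to E$ is continuous and $E$-linear, the subgroup $U = \ker\theta$ is open (hence closed) in $X$ and has finite index, because $\theta$ factors through the finite group $X/U$. Saying that an open subgroup $H$ of $G$ stabilizes $\theta$ is saying that $\theta(h \cdot x) = \theta(x)$ for all $h \in H$ and $x \in X$, i.e.\ that the map $\pi \circ \alpha(h, \cdot) : X \to X/U$ equals $\pi$ for every $h \in H$, where $\pi : X \to X/U$ is the projection and $\alpha : G \times X \to X$ is the action map.

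First I would consider the continuous map $\tilde\alpha = \pi \circ \alpha : G \times X \to X/U$ and use the fact that its target is a finite discrete set. For each $x_0 \in X$, the point $(1, x_0)$ has a product neighborhood $N_{x_0} \times V_{x_0}$ on which $\tilde\alpha$ is constantly equal to $\pi(x_0)$; in particular $\tilde\alpha(g, x) = \pi(x)$ for all $g \in N_{x_0}$ and $x \in V_{x_0}$, since the same holds at $g = 1$. The key step is then to invoke compactness of $X$ to extract a finite subcover $V_{x_1}, \dots, V_{x_k}$ of $X$ and to set $N = N_{x_1} \cap \cdots \cap N_{x_k}$, an open neighborhood of the identity in $G$ such that $\tilde\alpha(g, x) = \pi(x)$ for every $g \in N$ and every $x \in X$. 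This exactly says that every $g \in N$ fixes $\theta$.

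Finally, I would use the standing assumption (implicit in the setting of the paper, where $G$ is a locally profinite group like $\mathrm{B}$ or $\operatorname{GL}_2(\Qp)$) that $G$ admits a basis of open subgroups at the identity, so that $N$ contains an open subgroup $H$. Such an $H$ is contained in the stabilizer of $\theta$, which is therefore open, proving that $X^*$ is smooth. The only delicate point is the passage from the pointwise continuity of $g \mapsto \pi \circ \alpha(g, \cdot)$ to a uniform statement on all of $X$, and that is handled precisely by the compactness of the profinite group $X$.
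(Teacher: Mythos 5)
Your proof is correct, and it takes a genuinely different route from the paper's. You project the action map to the finite discrete quotient $X/\ker\theta$ and obtain uniformity over $X$ by extracting a finite subcover from the neighborhoods $V_{x_0}$; the paper instead studies the continuous difference map $(g,x) \mapsto \theta(gx - x)$ with values in the finite field $E$. Its kernel is open and contains $\{1\}\times X$, so a single basic product neighborhood $K\times Y$ of $(1,0)$ lies inside the kernel; compactness of $X$ then makes the open $E$-submodule $Y$ of finite codimension, and the finitely many generators $x_1,\dots,x_s$ of a complement are handled by pointwise continuity of $g\mapsto \theta(gx_i-x_i)$ at $g=1$, after which $E$-linearity of $\theta$ propagates the conclusion to all of $X$. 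Both arguments use compactness in equivalent guises (finite subcover versus finite index of an open submodule); yours is more purely topological, the paper's leans on the $E$-linear structure so that only one basic open set plus finitely many points need be treated. One small remark: your closing appeal to $G$ admitting a neighborhood basis of open subgroups is unnecessary. The stabilizer of $\theta$ in $G$ is a subgroup, and you have shown it contains an open neighborhood of the identity; a subgroup of a topological group containing a nonempty open set is itself open. (The paper's proof likewise writes ``open subgroup'' for $K$ and the $K_i$ where a priori one only gets open neighborhoods; this observation is the cleanest way to repair either proof, and it is also what justifies stating the lemma for an arbitrary topological group $G$.)
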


\begin{proof}
If $f \in X^*$, then the map $(g,x) \mapsto f(gx-x)$ is a continuous map $G \times X \to E$ and its kernel is therefore open in $G \times X$ so that there exists an open subgroup $K$ of $G$ and an open subspace $Y$ of $X$ such that $f(ky-y)=0$ whenever $k \in K$ and $y \in Y$. Since $X$ is compact, $Y$ is of finite codimension in $X$ and we can write $X= Y \oplus \oplus_{i=1}^s E x_i$. For each $i$ there is an open subgroup $K_i$ of $G$ such that $f(k x_i - x_i)=0$ if $k \in K_i$ and this implies that if $H = K \cap \cap_{i=1}^s K_i$ then $f(hx-x)=0$ for any $x \in X$ so that $f \in (X^*)^H$ with $H$ an open subgroup of $G$.
\end{proof}

\begin{lemm}\label{dualirred}
If $X$ is topologically irreducible, then $X^*$ is irreducible.
\end{lemm}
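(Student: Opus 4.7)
Let $Y \subseteq X^*$ be a nonzero $G$-stable $E$-subspace; the plan is to show $Y = X^*$. First I form the annihilator $Y^\perp = \{x \in X : f(x) = 0 \text{ for every } f \in Y\}$, a closed $G$-stable subspace of $X$ that is proper because any nonzero $f \in Y$ is nonvanishing on some point of $X$. Topological irreducibility then forces $Y^\perp = 0$, and the remaining task is to conclude $Y = X^*$ from this, i.e.\ to establish the profinite--discrete duality $(Y^\perp)^\perp = Y$ for subspaces of $X^*$.

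Suppose for contradiction that $f_0 \in X^* \setminus Y$. Since $f_0$ is continuous and $X$ is profinite, $f_0$ factors through a finite quotient $X/U_0$ for some open $E$-subspace $U_0$ of $X$. Writing $Y_U = Y \cap (X/U)^*$ for each open $E$-subspace $U$, so that $Y = \bigcup_U Y_U$, we have $f_0 \in (X/U_0)^* \setminus Y_{U_0}$. Finite-dimensional linear duality inside $(X/U_0)^*$ then produces $\bar x \in X/U_0$ with $g(\bar x) = 0$ for every $g \in Y_{U_0}$ but $f_0(\bar x) \neq 0$. If I can produce a lift $x \in X$ of $\bar x$ with $g(x) = 0$ for every $g \in Y$, then $x \in Y^\perp = 0$ forces $\bar x = 0$, contradicting $f_0(\bar x) \neq 0$, and I am done.

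The lift is constructed by combining a finite-level claim with a compactness argument. For each open $U \subseteq U_0$, the inclusion $Y_U/Y_{U_0} \hookrightarrow (X/U)^*/(X/U_0)^* = (U_0/U)^*$ dualizes to a surjection $U_0/U \twoheadrightarrow (Y_U/Y_{U_0})^*$, and this surjectivity lets me correct any chosen lift of $\bar x$ in $X/U$ by an element of $U_0/U$ so as to annihilate every $g \in Y_U$. Pulling back to $X$, I obtain a nonempty closed subset $C_U \subseteq X$ consisting of lifts of $\bar x$ killed by $Y_U$. As $U$ shrinks, the $C_U$'s form a family of closed subsets of the compact space $X$ with the finite intersection property, because for $U = \bigcap_i U_i$ one has $C_U \subseteq \bigcap_i C_{U_i}$. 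Compactness gives a point $x$ in the total intersection, which lifts $\bar x$ and kills every element of $\bigcup_U Y_U = Y$, as required.

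The main obstacle is the finite-level surjectivity $U_0/U \twoheadrightarrow (Y_U/Y_{U_0})^*$ and its use to build compatible lifts at every level $U$; once this is in place the compactness assembly is automatic from the profinite structure of $X$. What the argument really proves is the duality $(Y^\perp)^\perp = Y$ inside $X^*$, from which the irreducibility of $X^*$ is immediate given the topological irreducibility of $X$.
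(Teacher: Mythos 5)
Your proof is correct, and it reaches the same key duality statement as the paper but by a genuinely different route. The paper's proof establishes the reflexivity $(X^*)^* = X$ in one line from $X = \projlim_i X_i$, $X^* = \varinjlim_i X_i^*$, and $(\varinjlim_i X_i^*)^* = \projlim_i X_i$, and then argues: if a $G$-stable $\Lambda \subseteq X^*$ has $\ker(\Lambda) = 0$, any linear form on $X^*$ vanishing on $\Lambda$ is evaluation at some $x \in \ker(\Lambda) = 0$, hence zero, so $\Lambda = X^*$. Your argument instead reproves the needed fragment of this biduality by hand: you show $(Y^\perp)^\perp = Y$ directly by building, for any $f_0 \notin Y$, a point $x \in Y^\perp$ with $f_0(x) \neq 0$, via a finite-level correction (the surjectivity $U_0/U \twoheadrightarrow (Y_U/Y_{U_0})^*$, which is indeed the dual of $Y_U/Y_{U_0} \hookrightarrow (X/U)^*/(X/U_0)^* \cong (U_0/U)^*$) assembled by a compactness/finite-intersection argument. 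Both proofs are valid; the paper's is shorter by invoking the clean algebraic identity $(X^*)^* = X$, while yours is more self-contained and makes the profinite lifting mechanism explicit, at the cost of length. One small remark: the step $(Y^\perp)^\perp = Y$ in your argument uses no $G$-equivariance — only the step $Y^\perp = 0$ does — so you are in effect proving the general Pontryagin-style annihilator duality for profinite $E$-vector spaces and then specializing, which is slightly more than is strictly needed but is a clean way to organize the proof.
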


\begin{proof}
If $X=\projlim_{i \in I} X_i$ where each $X_i$ is a finite dimensional $E$-vector space, then a linear form on $X$ is continuous if and only if it factors through some $X_i$ and hence $X^*= \varinjlim_{i \in I} X_i^*$ so that $(X^*)^* = (\varinjlim_{i \in I} X_i^*)^*=  \projlim_{i \in I} X_i=X$. If $\Lambda$ is a $G$-invariant subspace of $X^*$ then $\ker(\Lambda) = \cap_{f \in \Lambda} \ker(f)$ is a $G$-invariant closed subspace of $X$ which is therefore either equal to $X$ or to $\{0\}$. If it is equal to $X$ then obviously $\Lambda=\{0\}$ and if it is equal to $\{0\}$, then the fact that $(X^*)^*=X$ implies that no nonzero linear form on $X^*$ is zero on $\Lambda$ so that $\Lambda=X^*$.
\end{proof}

The representation $\projlim_\psi \ddiese(W)$ is a profinite representation of $\B$ since $\ddiese(W) \simeq E\dcroc{X}^{\dim(W)}$ and we have the following result (see also proposition 1.2.3 of \cite{LB5}). 

\begin{prop}\label{omsmirr}
The representation $\Omega(W) = (\projlim_\psi \ddiese(W))^*$ is a smooth irreducible representation of $\B$ if $\dim(W) \geq 2$.
\end{prop}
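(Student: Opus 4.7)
The plan is to apply the abstract Lemmas \ref{dualsm} and \ref{dualirred} to the profinite module $X := \projlim_\psi \ddiese(W)$. For smoothness, observe that $\ddiese(W) \simeq E\dcroc{X}^n$ (with $n = \dim W$) is a profinite $E$-module and $\psi$ is continuous, so $X$ is profinite; the formulas of \S\ref{colmez} defining the $\B$-action on $X$ are manifestly continuous. Lemma \ref{dualsm} then yields smoothness of $\Omega(W) = X^*$.

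For irreducibility, Lemma \ref{dualirred} reduces the problem to showing $X$ has no proper nonzero closed $\B$-invariant subspace, equivalently (by the standard duality between a profinite $E$-module and its discrete smooth dual, under which annihilators exchange closed $\B$-invariant subspaces of $X$ with $\B$-subrepresentations of $\Omega(W)$) that $\Omega(W)$ has no proper nonzero subrepresentation. My strategy is to exhibit $\theta$ as a cyclic vector. By Lemma \ref{acbormu}, $\theta$ is a $\K\Z$-eigenvector with eigencharacter $\sigma := \omega^{h-1}\chi \otimes \chi$, so Frobenius reciprocity produces a nonzero $\B$-equivariant map $\ind_{\K\Z}^{\B}\sigma \twoheadrightarrow \Omega'(W) := \langle \B \cdot \theta \rangle$. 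After a preliminary twist (using Lemma \ref{intert} together with $\omega_n^{(p^n-1)/(p-1)} = \omega$) bringing $h$ into its primitive representative $1 \leq h \leq p^{n-1}-1$, Theorem \ref{nultheta} shows this map kills the generator of $S_n(h,\sigma)$ and hence factors through $\Pi_n(h,\sigma)$, which is irreducible by Theorem \ref{irredim2}. So $\Omega'(W)$ is already irreducible.

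The main obstacle is to upgrade this to the equality $\Omega'(W) = \Omega(W)$, equivalently to show $\Omega'(W)^\perp = 0$ inside $X$. Suppose $y = (y_i)_{i \geq 0} \in X$ annihilates every $\B$-translate of $\theta$. Taking $g = \smat{1 & 0 \\ 0 & p^{-i}}$ gives $(g \cdot y)_0 = y_i$, whence $\theta_0(y_i) = 0$ for each $i \geq 0$. Combining this with the action of $\smat{1 & z \\ 0 & 1}$ for varying $z \in \Qp$, with $\Gamma$-twists coming from $\smat{1 & 0 \\ 0 & a}$, and with the central character, and then using the explicit $\psi$-formulas of Lemma \ref{psiomeg} together with the surjectivity of $\psi$ on $\ddiese(W)$ from Corollary \ref{ddiese}, one extracts the vanishing of every coefficient of each $y_i$ in the topological basis $\{X^m f_j\}_{m \geq 0,\, 0 \leq j \leq n-1}$ of $\ddiese(W)$, forcing $y = 0$. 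The primitivity of $h$, together with the absolute irreducibility of $W$ which keeps the $(\phi,\Gamma)$-module $\dfont(W)$ from having any proper sub-object, is what ensures that this system of vanishing conditions admits no nontrivial solution; this is the delicate step of the argument.
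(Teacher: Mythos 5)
Your argument for smoothness is correct and matches the paper: $\projlim_\psi \ddiese(W)$ is profinite, the $\B$-action is continuous, and Lemma~\ref{dualsm} applies. For irreducibility, your plan (show $\theta$ is a cyclic vector by proving that the annihilator in $X = \projlim_\psi \ddiese(W)$ of all $\B$-translates of $\theta$ is zero) is conceptually sound and genuinely different from the paper's proof, but it has a real gap: the entire final paragraph, where you claim that the vanishing conditions $\theta_0(y_i)=0$ together with the $\smat{1&z\\0&1}$-, $\Gamma$- and central-character actions ``force $y=0$'', is not an argument. You explicitly flag it as ``the delicate step,'' and indeed it is; nothing you have written explains why the $\Gamma$-twists (which involve the power series $f_\gamma(X)^{hp^j(p-1)/(p^n-1)}$) and the unipotent action combine to pin down every coefficient of every $y_i$ in the topological basis of $\ddiese(W)$, nor where primitivity of $h$ or irreducibility of $\dfont(W)$ actually enters. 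As it stands you have reduced the proposition to a nontrivial assertion and declared it true.

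The paper avoids this computation entirely by invoking Lemma~\ref{dualirred} to reduce to topological irreducibility of $\projlim_\psi \ddiese(W)$, and then citing Colmez' structure result (lemma~III.3.6 of \cite{CPG}): every closed $\B$-invariant subspace has the form $\projlim_\psi M$ for a sub-$E\dcroc{X}$-module $M$ of $\ddiese(W)$ stable under $\psi$ and $\Gamma$ with $\psi$ surjective on $M$. This converts the infinite-dimensional question into a finite one about lattices in $\dfont(W)$. Irreducibility of $\dfont(W)$ forces $M$ to be a lattice, proposition~II.4.2~(iv) of \cite{CPG} forces $M\supset X\cdot\ddiese(W)$, and the formulas of Lemma~\ref{psiomeg} together with $h(p-1)\neq p^n-1$ show $\psi(Xf_j)\in E^\times f_{j-1}$ for some $j$, which pulls one $f_j$ and then all of them into $M$. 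This is where the standing hypotheses actually do their work, and it is several orders of magnitude shorter than what you would need to make your annihilator computation rigorous. If you want to pursue your route, the first thing to do is make precise what structure the annihilator $\Omega'(W)^\perp$ has as a closed $\B$-invariant subspace of $X$ --- and at that point you are essentially reproving the Colmez structure theorem, which is the thing the paper chooses to cite.
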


\begin{proof}
Lemma \ref{dualirred} shows that it is enough to prove that $\projlim_\psi \ddiese(W)$ is a topologically irreducible representation of $\B$, and lemma III.3.6 of \cite{CPG} asserts that any closed $\B$-invariant subspace of $\projlim_\psi \ddiese(W)$ is of the form $\projlim_\psi M$ where $M$ is a sub-$E\dcroc{X}$-module of $\ddiese(W)$ stable under $\psi$ and $\Gamma$ and such that $\psi : M \to M$ is surjective. Since $\dfont(W)$ is irreducible, $M$ is a lattice by proposition II.3.5 of \cite{CPG} applied to $E\dpar{X} \otimes_{E\dcroc{X}} M$ and item (iv) of proposition II.4.2 of \cite{CPG} implies that such an $M$ contains $X \cdot \ddiese(W)$ and the formulas of lemma \ref{psiomeg} imply that $\psi(X f_j) \in E^\times \cdot f_{j-1}$ if $i_{n-j} \neq p-1$. Since $h(p-1) \neq p^n-1$, at least one of the $i_{n-j}$ is $\neq p-1$ so that $M$ contains one $f_j$ and hence all of them by repeatedly applying $\psi$. 
\end{proof}

\section{Breuil's correspondence for mod $p$ representations}
\label{breuil}

In this chapter, we show that the representations constructed in chapter \ref{smr} are the same as the ones arising from Colmez' functor applied to $n$-dimensional absolutely irreducible representations of $\gal$. We also show that if $n=2$, then these representations are the restriction to $\B$ of the supersingular representations of $\G$ predicted by Breuil.

\Subsection{The isomorphism in dimension $n$}\label{dimde}
By corollary \ref{twistomn}, every absolutely irreducible $n$-dimensional $E$-linear representation $W$ of $\gal$ is isomorphic (after possibly enlarging $E$) to $\ind(\omega_n^h) \otimes \chi$ with $1 \leq h \leq p^n-2$ primitive and $\chi : \gal \to E^\times$ a character. Furthermore, $\omega_n^{(p^n-1)/(p-1)} = \omega$ so we can change $h$ and $\chi$ in order to have $1 \leq h \leq (p^n-1)/(p-1)-1$ which implies that at least one of the $n$ digits of $h$ in base $p$ is zero. The intertwining $\ind(\omega_n^h) \simeq \ind(\omega_n^{ph})$ implies that we can make a cyclic permutation of the digits of $h$ without changing $\ind(\omega_n^h)$ and if we arrange for the leading digit to be $0$, then $1 \leq h \leq p^{n-1} - 1$.

\begin{theo}\label{shlisom}
If $W = \ind(\omega_n^h) \otimes \chi$ with $n \geq 2$ and $1 \leq h \leq p^{n-1}-1$ primitive, then $\Omega(W) \simeq \Pi_n(h,\sigma)$ with $\sigma= \chi\omega^{h-1} \otimes \chi$.
\end{theo}

\begin{proof}
By lemma \ref{acbormu} and Frobenius reciprocity, $\Omega(W)$ is a quotient of $\ind_{\K\Z}^{\B} \sigma$ with $\sigma= \chi\omega^{h-1} \otimes \chi$, the map being given by $\sum_{\beta,\delta} \alpha(\beta,\delta) [ g_{\beta,\delta}] \mapsto \sum_{\beta,\delta} \alpha(\beta,\delta) g_{\beta,\delta} \cdot \theta$. This map is surjective (since it is nonzero and $\Omega(W)$ is irreducible by proposition \ref{omsmirr}) and bearing in mind that $\smat{p^n & 0 \\ 0 & p^n}$ acts by $\lambda^{2n}$, theorem \ref{nultheta} implies that its kernel contains $(-\lambda^{-1})^n \left[\smat{1 & 0 \\ 0 & p^n}\right] + w_{h(p-1),n}$ and hence $S_n(h,\sigma)$, so that we get a nontrivial map $\Pi_n(h,\sigma) \to \Omega(W)$. Since $\Pi_n(h,\sigma)$ is irreducible by theorem \ref{irredim2}, this map is an isomorphism.
\end{proof}

Note that we can define two $B$-equivariant operators $T_+$ and $T_-$ on $\ind_{\K\Z}^{\B} \sigma$ by
\[ T_+([g]) = \sum_{j=0}^{p-1} \left[g\smat{p & j \\ 0 & 1}\right]
\quad\text{and}\quad 
T_-([g]) = \left[g\smat{1 & 0 \\ 0 & p}\right], \]
so that the ``usual'' Hecke operator is $T=T_+ + T_-$. It is easy to see that theorem \ref{shlisom} applied with $h=1$ simply says that 
\[ \Omega(\ind(\omega_n) \otimes \chi) \simeq \frac{\ind_{\K\Z}^{\B} (1 \otimes 1)}{T_- + (-1)^n T_+^{n-1}} \otimes (\chi \circ \det). \]

\begin{center}
\begin{picture}(300,120)(0,10)

\put(0,10){\framebox(300,120){}}
\put(5,50){ht $\delta$}
\put(5,100){ht $\delta+1$}

\dottedline{2}(60,50)(120,50)
\dottedline{2}(60,100)(120,100)

\dottedline{2}(180,50)(240,50)
\dottedline{2}(180,100)(240,100)

\put(90,100){\circle*{5}}

\put(70,50){\line(2,5){20}}
\put(90,50){\line(0,5){50}}
\put(110,50){\line(-2,5){20}}

\put(190,50){\line(2,5){20}}
\put(210,50){\line(0,5){50}}
\put(230,50){\line(-2,5){20}}

\put(190,50){\circle*{5}}
\put(210,50){\circle*{5}}
\put(230,50){\circle*{5}}

\put(95,105){$x$}

\put(190,35){$x$}
\put(210,35){$x$}
\put(230,35){$x$}

\put(150,75){$\mapsto$}

\put(150,25){$T_+$}

\end{picture}
\end{center}

\begin{center}
\begin{picture}(300,120)(0,10)

\put(0,10){\framebox(300,120){}}
\put(5,50){ht $\delta$}
\put(5,100){ht $\delta+1$}

\dottedline{2}(60,50)(120,50)
\dottedline{2}(60,100)(120,100)

\dottedline{2}(180,50)(240,50)
\dottedline{2}(180,100)(240,100)

\put(210,100){\circle*{5}}

\put(70,50){\line(2,5){20}}
\put(90,50){\line(0,5){50}}
\put(110,50){\line(-2,5){20}}

\put(190,50){\line(2,5){20}}
\put(210,50){\line(0,5){50}}
\put(230,50){\line(-2,5){20}}

\put(70,50){\circle*{5}}
\put(90,50){\circle*{5}}
\put(110,50){\circle*{5}}

\put(150,75){$\mapsto$}

\put(215,105){$x+y+z$}

\put(70,35){$x$}
\put(90,35){$y$}
\put(110,35){$z$}

\put(150,25){$T_-$}

\end{picture}
\end{center}

\Subsection{Supersingular representations restricted to $\B_2(\Qp)$}
\label{ssgresb}
We now explain how to relate the representations $\Pi_2(h,\sigma)$ to the supersingular representations of \cite{BL,BL2,BR1}. Recall that if $r \geq 0$, then $\Sym^r E^2$ is the space of polynomials in $x$ and $y$ which are homogeneous of degree $r$ with coefficients in $E$, endowed with the action of $\KK$ factoring through $\operatorname{GL}_2(\Fp)$ given by $\smat{a & b \\ c & d} P(x,y) = P(ax+cy,bx+dy)$ and that we extend the action of $\KK$ to an action of $\KK\Z$ by $\smat{p & 0 \\ 0 & p} P(x,y) = P(x,y)$.  We now assume that $0 \leq r \leq p-1$.

\begin{lemm}\label{iwasawa}
The ``restriction to $\B$'' map
\[ \res_{\B} : \ind_{\KK\Z}^{\G} \Sym^r E^2 \to \ind_{\K\Z}^{\B} \Sym^r E^2 \] 
is an isomorphism. 
\end{lemm}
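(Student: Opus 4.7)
The plan is to deduce this from the Iwasawa decomposition, which is already invoked informally in the introduction in the form $\B/\K\Z = \G/\KK\Z$. The map $\res_\B$ is manifestly $\B$-equivariant (restriction to a subgroup always commutes with translation by that subgroup), so the only things to verify are well-definedness (i.e.\ that the restriction has compact support mod $\Z$), injectivity, and surjectivity.

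First I would observe that $\G = \KK \cdot \B$: this is the Iwasawa decomposition, coupled with $\Z \subset \B$, giving $\G = \KK\Z \cdot \B$. Combined with the equality $\KK\Z \cap \B = \K\Z$ (which is immediate from $\K = \B \cap \KK$), the natural inclusion $\B \hookrightarrow \G$ induces a bijection of coset spaces $\B/\K\Z \xrightarrow{\sim} \G/\KK\Z$. This is the geometric fact that powers all three remaining verifications.

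For injectivity, if $f \in \ind_{\KK\Z}^{\G} \Sym^r E^2$ restricts to $0$ on $\B$, then for any $g \in \G$ write $g = kb$ with $k \in \KK\Z$ and $b \in \B$; by covariance $f(g) = \sigma(k) f(b) = 0$, so $f = 0$. For surjectivity, given $f_0 \in \ind_{\K\Z}^{\B} \Sym^r E^2$, define $f : \G \to \Sym^r E^2$ by $f(kb) = \sigma(k) f_0(b)$; well-definedness follows from $\KK\Z \cap \B = \K\Z$, since two decompositions differ by an element of $\K\Z$ on which $\sigma$ is compatible by definition. The covariance $f(k'g) = \sigma(k') f(g)$ for $k' \in \KK\Z$ is built in, and $f|_\B = f_0$. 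Finally, compact support mod $\Z$ is transported by the bijection $\B/\K\Z = \G/\KK\Z$: a function on $\B$ supported on finitely many cosets of $\K\Z$ corresponds to a function on $\G$ supported on the same (finitely many) cosets of $\KK\Z$.

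There is no real obstacle — the proof is a routine unpacking of the Iwasawa decomposition together with $\K\Z = \KK\Z \cap \B$. The only point to be a little careful about is ensuring that the coefficient space $\Sym^r E^2$, which carries a genuine $\KK\Z$-action (not just a $\K\Z$-action), does not cause any trouble: it does not, because on $\K\Z$ the two actions agree by construction, and the extension formula $f(kb) = \sigma(k) f_0(b)$ uses only this restricted action on the input $f_0(b) \in \Sym^r E^2$.
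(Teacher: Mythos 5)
Your proposal is correct and takes the same approach as the paper, whose proof is the single line ``This follows from the Iwasawa decomposition $\G = \B \cdot \KK$''; you have simply unpacked that remark carefully (the bijection $\B/\K\Z \xrightarrow{\sim} \G/\KK\Z$, the intersection $\KK\Z \cap \B = \K\Z$, and the three verifications of well-definedness, injectivity, and surjectivity), all of which are correct.
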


\begin{proof}
This follows from the Iwasawa decomposition $\G=\B \cdot \KK$.
\end{proof}

Let $T$ be the Hecke operator defined in \cite{BL,BL2}. Let $[g,v] \in \ind_{\KK\Z}^{\G} \Sym^r E^2$ be the element defined by $[g,v](h)= \Sym^r(hg)(v)$ if $hg \in\KK\Z$ and $[g,v](h)=0$ otherwise, so that $h[g,v]=[hg,v]$ and $[gk,v]=[g,\Sym^r(k)v]$ if $k \in\KK\Z$. 

\begin{lemm}\label{formule}
We have
\[ T ( [1,x^{r-i}y^i] ) =  
\begin{cases} \sum_{j =0}^{p-1}  \smat{p & j \\ 0 & 1} [1, (-j)^i x^r] & \text{if $i \leq r-1$;} \\
\smat{1 & 0 \\ 0 & p} [1,y^r] + \sum_{j =0}^{p-1}  \smat{p & j \\ 0 & 1} [1, (-j)^r x^r] & \text{if $i = r$.}
\end{cases} \]
\end{lemm}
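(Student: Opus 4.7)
The plan is to invoke the explicit Barthel--Livn\'e formula for $T$ and reduce the lemma to a short binomial expansion in $\Sym^r E^2$. The double coset $\KK\Z \smat{p & 0 \\ 0 & 1}\KK\Z$ has the standard decomposition into $p+1$ left $\KK\Z$-cosets, with representatives $\smat{p & j \\ 0 & 1}$ for $0 \leq j \leq p-1$ together with $\smat{1 & 0 \\ 0 & p}$. The identities
\[ \smat{p & j \\ 0 & 1}\smat{1 & -j \\ 0 & p} = \smat{p & 0 \\ 0 & p} = \smat{1 & 0 \\ 0 & p}\smat{p & 0 \\ 0 & 1} \]
exhibit the ``compensating'' matrix $p\alpha^{-1}$ explicitly for each coset representative $\alpha$. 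Since $\Z$ acts trivially on $\Sym^r E^2$ and $T$ must be $\G$-equivariant, the formula of \cite{BL,BL2} then reads
\[ T[1,v] = \sum_{j=0}^{p-1}\left[\smat{p & j \\ 0 & 1},\, \Sym^r\smat{1 & -j \\ 0 & p} v\right] + \left[\smat{1 & 0 \\ 0 & p},\, \Sym^r\smat{p & 0 \\ 0 & 1} v\right]. \]

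Specialising this to $v = x^{r-i}y^i$, each $\Sym^r$-factor collapses drastically because $E$ has characteristic $p$. We compute
\[ \Sym^r\smat{1 & -j \\ 0 & p}(x^{r-i}y^i) = x^{r-i}(-jx+py)^i = \sum_{k=0}^{i}\binom{i}{k}(-j)^{i-k}p^k\, x^{r-k}y^k, \]
and every term with $k \geq 1$ vanishes in $E$, leaving $(-j)^i x^r$. Similarly
\[ \Sym^r\smat{p & 0 \\ 0 & 1}(x^{r-i}y^i) = p^{r-i}\, x^{r-i}y^i \]
is equal to $0$ when $i \leq r-1$ and to $y^r$ when $i=r$. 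Rewriting the summands via $h[g,v]=[hg,v]$ then produces both cases of the lemma directly.

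The only step requiring any real thought is fixing the exact form of the $\Sym^r$-factors in the Hecke operator: once one identifies the correct compensating factor as $p\alpha^{-1}$ on each coset (which is forced, via the relation $\alpha\beta=pI$, by the trivial action of $\Z$ on $\Sym^r E^2$ together with $\G$-equivariance of $T$), the rest is the elementary polynomial manipulation displayed above, using nothing beyond $p = 0$ in $E$ and the usual binomial expansion; the hypothesis $0 \leq r \leq p-1$ plays no particular role in this identity but is inherited from the ambient context.
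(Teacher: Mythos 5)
Your computation is correct, and it is essentially the content of the reference the paper points to: the paper itself gives no proof of Lemma \ref{formule} but simply cites \S 2.2 of [BR2], where precisely this coset-by-coset unwinding is carried out. Your chain of steps -- decompose $\KK\Z\smat{p & 0 \\ 0 & 1}\KK\Z$ into the $p+1$ left cosets, apply the explicit intertwiner $\Sym^r(p\alpha^{-1})$ on each, expand the polynomials and kill every term carrying a factor of $p$ -- is the standard derivation, and all the individual evaluations ($x^{r-i}(-jx+py)^i \equiv (-j)^i x^r$ and $p^{r-i}x^{r-i}y^i \equiv 0$ or $y^r$) are right.

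Two small points worth tightening, though neither affects the conclusion. First, when you write $\Sym^r\smat{1 & -j \\ 0 & p}$, this is not literally an element of the $\KK\Z$-representation $\Sym^r E^2$ defined in the paper (the matrix has non-unit determinant), so one should say explicitly that the action is taken on $\Sym^r\Zp^2$ and then reduced mod $p$; you clearly intend this, but the shorthand could mislead a reader. Second, the assertion that the compensating factors $p\alpha^{-1}$ are \emph{forced} by $\G$-equivariance and triviality of $\Z$ is a bit quick as stated: what really pins the formula down is the $\KK\Z$-bi-equivariance (well-definedness) of $T$ together with the one-dimensionality in this degree of the Hecke algebra $\operatorname{End}_{\G}(\ind_{\KK\Z}^{\G}\Sym^r E^2)$ supported on that double coset, plus matching the normalization of $[BL,BL2]$. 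The well-definedness check itself reduces to the relation $k\alpha_i = \alpha_{i'}k'$ in $\G$, which the compensators $p\alpha_i^{-1}$ intertwine by exactly the identity $\smat{p & j \\ 0 & 1}\smat{1 & -j \\ 0 & p} = pI$ you quote; spelling that out would make the argument airtight.
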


\begin{proof}
See \S 2.2 of \cite{BR2}.
\end{proof}

The group $\K\Z$ acts on $x^r \in \Sym^r E^2$ by $\omega^r \otimes 1$ so that we get a nontrivial injective map  $\ind_{\K\Z}^{\B} \omega^r \otimes 1 \to \ind_{\K\Z}^{\B} \Sym^r E^2$.

\begin{prop}\label{isombor}
The map
\[ \frac{\ind_{\K\Z}^{\B} (\omega^r \otimes 1)}{T (\ind_{\K\Z}^{\B} \Sym^r E^2) \cap \ind_{\K\Z}^{\B} (\omega^r \otimes 1)}   \to \frac{\ind_{\K\Z}^{\B} \Sym^r E^2}{T(\ind_{\K\Z}^{\B} \Sym^r E^2)} \]
is an isomorphism.
\end{prop}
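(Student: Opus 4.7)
The map of the proposition is induced by the $\K\Z$-equivariant inclusion $Ex^r \hookrightarrow \Sym^r E^2$, and its injectivity is formal: if $f \in \ind_{\K\Z}^{\B}(\omega^r \otimes 1)$ already lies in $T(\ind_{\K\Z}^{\B} \Sym^r E^2)$, then by definition it lies in the subspace being quotiented on the left. The content is therefore surjectivity, i.e., the equality $\ind_{\K\Z}^{\B}(\omega^r \otimes 1) + T(\ind_{\K\Z}^{\B} \Sym^r E^2) = \ind_{\K\Z}^{\B} \Sym^r E^2$. Since parabolic induction is exact, applying it to the short exact sequence $0 \to Ex^r \to \Sym^r E^2 \to Q \to 0$ of $\K\Z$-modules (with $Q := \Sym^r E^2 / Ex^r$) shows that this amounts to proving that the composite $T : \ind_{\K\Z}^{\B} \Sym^r E^2 \to \ind_{\K\Z}^{\B} Q$ is surjective.

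To compute this composite, I would combine lemma \ref{formule} with the $\B$-equivariance of $T$ to obtain, for every $b \in \B$ and every $0 \leq i \leq r$,
\[ T\bigl([b, x^{r-i} y^i]\bigr) \equiv \begin{cases} 0 & \text{if $0 \leq i \leq r-1$,} \\[2pt] \bigl[b \smat{1 & 0 \\ 0 & p}, \overline{y^r}\bigr] & \text{if $i = r$,} \end{cases} \quad \pmod{\ind_{\K\Z}^{\B}(Ex^r)}. \]
Since the map $b \mapsto b \smat{1 & 0 \\ 0 & p}$ is a bijection of $\B$, the image of $T$ in $\ind_{\K\Z}^{\B} Q$ is precisely the $E$-linear span of the elements $[b, \overline{y^r}]$ for $b \in \B$. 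Writing any such $b$ as $g \cdot k$ with $g$ a coset representative of $\B/\K\Z$ and $k \in \K\Z$, we have $[b, \overline{y^r}] = [g, k\, \overline{y^r}]$, so this span coincides with $\ind_{\K\Z}^{\B} U \subseteq \ind_{\K\Z}^{\B} Q$, where $U \subseteq Q$ is the $\K\Z$-submodule generated by $\overline{y^r}$.

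It remains to verify $U = Q$, i.e., that $\overline{y^r}$ is a cyclic generator of $Q$ as a $\K\Z$-module. For this, I would compute that for $b \in \Zp$,
\[ \pmat{1 & b \\ 0 & 1}\, \overline{y^r} = \overline{(bx+y)^r} = \sum_{j=1}^{r} \binom{r}{j} b^{r-j}\, \overline{x^{r-j} y^j}, \]
the $j = 0$ term being killed in $Q$. Since $r \leq p-1$, all binomials $\binom{r}{j}$ for $0 \leq j \leq r$ are nonzero in $E$, and the values $b = 1, 2, \ldots, r$ are pairwise distinct modulo $p$, so the resulting $r \times r$ linear system expressing the $\smat{1 & b \\ 0 & 1}\overline{y^r}$ as functions of the basis $\{\overline{x^{r-j} y^j}\}_{1 \leq j \leq r}$ of $Q$ is, up to a diagonal rescaling by the $\binom{r}{j}$, a Vandermonde matrix in $b$ and hence invertible. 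Inverting it expresses each $\overline{x^{r-j} y^j}$ as an $E$-linear combination of $\K\Z$-translates of $\overline{y^r}$, which gives $U = Q$ and finishes the proof.

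The main obstacle is simply to identify the correct reduction: one has to notice that, modulo $\ind_{\K\Z}^{\B}(Ex^r)$, only the ``$T_-$-part'' of $T$ on the highest-weight-quotient vector $\overline{y^r}$ contributes, after which surjectivity reduces to the one-line Vandermonde computation above, valid thanks to the hypothesis $0 \leq r \leq p-1$.
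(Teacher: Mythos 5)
Your proof is correct and rests on the same two inputs as the paper's: injectivity is formal, and surjectivity comes from Lemma~\ref{formule} with $i=r$ together with a Vandermonde-type cyclic-generation observation. The packaging differs slightly: the paper notes directly that the target quotient is generated by the $\B$-translates of $[1,y^r]$ (because the $\smat{1 & \Zp \\ 0 & 1}$-translates of $y^r$ already span $\Sym^r E^2$, a Vandermonde fact on $r+1$ points) and then uses the $i=r$ case of Lemma~\ref{formule} to show $[1,y^r]$ lies in $T(\ind_{\K\Z}^{\B}\Sym^rE^2)+\ind_{\K\Z}^{\B}(\omega^r\otimes 1)$, whereas you pass to the quotient $Q=\Sym^rE^2/Ex^r$, describe the image of $T$ in $\ind_{\K\Z}^{\B}Q$ as $\ind_{\K\Z}^{\B}U$, and prove $U=Q$ by an $r\times r$ Vandermonde argument; these are the same argument up to where one applies the cyclic-generation step, and your version is a little more verbose but spells out what the paper leaves implicit.
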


\begin{proof}
The map above is injective by construction, and the representation to the right is generated by the $\B$-translates of $[1,y^r]$ since the $\smat{1 & \Zp \\ 0 & 1}$-translates of $y^r$ generate $\Sym^r E^2$. Lemma \ref{formule} applied with $i=r$ shows that $[1,y^r] \in T(\ind_{\K\Z}^{\B} \Sym^r E^2)+\ind_{\K\Z}^{\B} (\omega^r \otimes 1)$ so that the map is surjective.
\end{proof}

\begin{lemm}\label{noyr}
If $r \geq 1$, then $T (\ind_{\K\Z}^{\B} \Sym^r E^2) \cap \ind_{\K\Z}^{\B} (\omega^r \otimes 1)$ is generated by the $\B$-translates of
\[ \begin{cases}
T([1,x^{r-i}y^i]) & \text{for $0 \leq i \leq r-1$,} \\
T( \sum_{i=0}^{p-1} \lambda_i [ \smat{p & i \\ 0 & 1} , y^r ]) & \text{where $(\lambda_0,\hdots,\lambda_{p-1}) \in V_{r,1}^\perp$}.
\end{cases} \]
\end{lemm}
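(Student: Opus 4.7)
The plan is to prove both containments. The easier direction ($\supseteq$) verifies that each listed element lies in the intersection. Type (a) follows directly from lemma \ref{formule}: for $i<r$, $T([1,x^{r-i}y^i]) = \sum_{j=0}^{p-1}(-j)^i[\smat{p & j \\ 0 & 1},x^r]$ has all its values in $E\cdot x^r$. For type (b), I compute $T\bigl(\sum_i\lambda_i[\smat{p & i \\ 0 & 1},y^r]\bigr)$: the $T_+$-contribution is already $E\cdot x^r$-valued; the $T_-$-contribution involves $\smat{p & i \\ 0 & 1}\smat{1 & 0 \\ 0 & p}=p\smat{1 & i \\ 0 & 1}$, and using that the centre acts trivially on $\Sym^r E^2$ together with $\smat{1 & i \\ 0 & 1}\in\K$ simplifies the $T_-$-part to $[1,\sum_i\lambda_i(ix+y)^r]$. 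Expanding by the binomial theorem gives $\sum_{k=0}^r\binom{r}{k}\bigl(\sum_i\lambda_i i^k\bigr)x^k y^{r-k}$, and since $(\lambda_i)\in V_{r,1}^\perp$ is equivalent to $\sum_i\lambda_i i^k=0$ for $k<r$ (the binomial basis $\{\binom{i}{k}\}_{k<r}$ and the monomial basis $\{i^k\}_{k<r}$ span the same polynomials over $\Fp$), only the $k=r$ term survives and yields a scalar multiple of $x^r$.

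For the reverse containment, take $F\in\ind_{\K\Z}^{\B}\Sym^r E^2$ with $T(F)\in\ind_{\K\Z}^{\B}(\omega^r\otimes 1)$, and write $F=\sum_{\beta,\delta}[g_{\beta,\delta},v_{\beta,\delta}]$. Split each value as $v_{\beta,\delta}=v^{<r}_{\beta,\delta}+c_r(\beta,\delta)\,y^r$ with $v^{<r}_{\beta,\delta}\in\bigoplus_{i<r}Ex^{r-i}y^i$, yielding $F=F^{<r}+F^r$. The type (a) computation shows $T(F^{<r})\in\ind_{\K\Z}^{\B}(\omega^r\otimes 1)$ and, by $\G$-equivariance of $T$, is a $\B$-linear combination of type (a) generators, so $T(F^r)=T(F)-T(F^{<r})\in\ind_{\K\Z}^{\B}(\omega^r\otimes 1)$ as well; it remains to exhibit $T(F^r)$ as a $\B$-linear combination of type (b) generators. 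The $T_+$-part of $T(F^r)$ is automatically in $Ex^r$, so the constraint falls entirely on $T_-(F^r)$. The key bookkeeping step is: decomposing $\beta\in A$ as $\beta=\alpha_1 p^{-1}+\beta^*/p$ with leading digit $\alpha_1\in\{0,\dots,p-1\}$ gives $g_{\beta,\delta}\smat{1 & 0 \\ 0 & p}=g_{\beta^*,\delta+1}\smat{1 & \alpha_1 \\ 0 & 1}$, whence
\[ [g_{\beta,\delta}\smat{1 & 0 \\ 0 & p},\,y^r] = [g_{\beta^*,\delta+1},\,(\alpha_1 x+y)^r]. \]

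Grouping $T_-(F^r)$ by its image upper vertex $(\beta^*,\delta^*)$, whose $p$ preimages are $((\beta^*+i)/p,\delta^*-1)$ for $i=0,\dots,p-1$, and setting $\mu_i:=c_r((\beta^*+i)/p,\delta^*-1)$, the contribution at that upper vertex is $[g_{\beta^*,\delta^*},\sum_i\mu_i(ix+y)^r]$; by the binomial identity from the easy direction, this lies in $Ex^r$ iff $(\mu_0,\dots,\mu_{p-1})\in V_{r,1}^\perp$. Consequently $F^r$ is a sum, grouped by upper vertex, of blocks of the form $\sum_i\mu_i[g_{(\beta^*+i)/p,\delta^*-1},y^r]$ with $(\mu_i)\in V_{r,1}^\perp$, and each such block is the left translate by $g_{\beta^*,\delta^*}\in\B$ of $\sum_i\lambda_i[\smat{p & i \\ 0 & 1},y^r]$, since $[\smat{p & i \\ 0 & 1},y^r]=[g_{i/p,-1},y^r]$ by centrality of $p\cdot\Id$. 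Applying $T$ then expresses $T(F^r)$ as the required $\B$-linear combination of type (b) generators. The main obstacle in the argument is the coset identification $g_{\beta,\delta}\smat{1 & 0 \\ 0 & p}=g_{\beta^*,\delta+1}\smat{1 & \alpha_1 \\ 0 & 1}$: the $\K$-twist converts the value $y^r$ into $(\alpha_1 x+y)^r$, and it is precisely this twist that makes the orthogonality condition $(\mu_i)\in V_{r,1}^\perp$ govern both directions of the proof.
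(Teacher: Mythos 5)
Your proof is correct and follows essentially the same route as the paper's. Both arguments reduce first to the $y^r$-component (your $F^r$, the paper's $v$), then group that component by the upper vertex using the coset identity $g_{\beta,\delta}\smat{1 & 0 \\ 0 & p} = g_{\beta^*,\delta+1}\smat{1 & \alpha_1 \\ 0 & 1}$ (the paper packages the same thing as the decomposition $A = \coprod_{i=0}^{p-1} p^{-1}A + p^{-1}i$ and the factorisation $g_{p^{-1}\beta+p^{-1}i,\delta} = g_{\beta,\delta+1}\smat{1 & ip^{-1} \\ 0 & p^{-1}}$), and finally observe that the $\smat{1 & i \\ 0 & 1}$-twist converts $y^r$ into $(ix+y)^r$ so that the condition for the sum to land in $E\cdot x^r$ is exactly $(\mu_i)\in V_{r,1}^\perp$. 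Your write-up is somewhat more explicit than the paper's about the split $F=F^{<r}+F^r$ and the bookkeeping of the central factor $p\cdot\Id$ (the paper handles the latter by a terminal remark about multiplying $v_\lambda$ by $\smat{p & 0 \\ 0 & p}$), but the underlying ideas are identical.
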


\begin{proof}
Lemma \ref{formule} above implies that $T([1,x^{r-i}y^i]) \in \ind_{\K\Z}^{\B} (\omega^r \otimes 1)$ if $i \leq r-1$ and hence likewise for the $\B$-translates of those vectors. We therefore only need to determine when a vector of the form $T(\sum_\alpha [b_\alpha, \lambda_\alpha y^r])$ belongs to $\ind_{\K\Z}^{\B} (\omega^r \otimes 1)$. If $v$ is a vector $v = \sum_{\beta,\delta} \sum_{i=0}^{p-1} \lambda_{\beta,\delta,i} [g_{p^{-1} \beta + p^{-1} i ,\delta}, y^r]$ (note that $A = \coprod_{i=0}^{p-1} p^{-1} A + p^{-1} i$), then we have
\[ T(v) = \sum_{\beta,\delta} g_{\beta,\delta+1} \cdot T \left( \lambda_{\beta,\delta,0} [ \smat{1 & 0 \cdot p^{-1} \\ 0 & p^{-1}},y^r] + \cdots + \lambda_{\beta,\delta,p-1} [ \smat{1 & (p-1) \cdot p^{-1} \\ 0 & p^{-1}},y^r] \right), \]
so that by lemma \ref{formule}, the set of vectors $v$ such that $T(v) \in \ind_{\K\Z}^{\B} (\omega^r \otimes 1)$ is generated by the $\B$-translates of the $v_\lambda =  \sum_{i=0}^{p-1} \lambda_i [ \smat{1 & p^{-1}i \\ 0 & p^{-1}} , y^r ]$ such that $T(v_\lambda) \in \ind_{\K\Z}^{\B} (\omega^r \otimes 1)$. Lemma \ref{formule} shows that this is the case if and only if $\sum_{i=0}^{p-1} \lambda_i [ \smat{1 & i \\ 0 & 1} , y^r ] \in \ind_{\K\Z}^{\B} (\omega^r \otimes 1)$ and so if and only if $\sum_{i=0}^{p-1} \lambda_i (ix+y)^r \in E \cdot x^r$ which is equivalent to $(\lambda_0,\hdots,\lambda_{p-1}) \in V_{r,1}^\perp$ since the vector space generated by the sequences $(0^\ell,1^\ell,\hdots,(p-1)^\ell)$ for $0 \leq \ell \leq r-1$ is $V_{r,1}$ (here $0^0=1$). Finally, we multiply the resulting $v_\lambda$ by $\smat{p & 0 \\ 0 & p}$.
\end{proof}

\begin{lemm}\label{enghecke}
If $r=0$, then $T(\ind_{\K\Z}^{\B} (1 \otimes 1))$ is generated by the $\B$-translates of
\[ \pmat{1 & 0\\0 & p}  [1,1] + \sum_{j =0}^{p-1} \pmat{p & j \\ 0 & 1} [1,1]  \] 
and if $r \geq 1$, then $T (\ind_{\K\Z}^{\B} \Sym^r E^2) \cap \ind_{\K\Z}^{\B} (\omega^r \otimes 1)$ is generated by the $\B$-translates of
\[ \sum_{j =0}^{p-1} \lambda_j \pmat{p & j \\ 0 & 1} [1,x^r],  \]
for $(\lambda_0,\hdots,\lambda_{p-1}) \in V_{r,1}$ and of
\[Ê\sum_{i=0}^{p-1} \mu_i i^r [1,x^r] + \sum_{i=0}^{p-1} \mu_i \pmat{p &  i \\ 0 & 1} \sum_{j =0}^{p-1} (-j)^r \pmat{p & j \\ 0 & 1} [1,x^r], \]
where $(\mu_0,\hdots,\mu_{p-1}) \in V_{r,1}^\perp$.
\end{lemm}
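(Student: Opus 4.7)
The plan is to invoke Lemma \ref{noyr} and to compute $T$ explicitly on each of its two families of generators using Lemma \ref{formule}, then identify the resulting elements with the two families claimed here. When $r=0$, the vector $[1,1]$ is a $\B$-cyclic generator of $\ind_{\K\Z}^{\B}(1\otimes 1)$, and since $T$ is $\G$-equivariant and hence commutes with the $\B$-action, $T(\ind_{\K\Z}^{\B}(1\otimes 1))$ equals the $\B$-span of $T([1,1])$, which by Lemma \ref{formule} is precisely the stated generator.

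Assume now $r\geq 1$. The first family of generators in Lemma \ref{noyr} reads $T([1,x^{r-i}y^i])=\sum_j(-j)^i\smat{p&j\\0&1}[1,x^r]$ for $0\leq i\leq r-1$, by Lemma \ref{formule}. Since $\{(-j)^i\}_{0\leq i\leq r-1}$ and $\{\binom{j}{\ell}\}_{0\leq\ell\leq r-1}$ are both bases of the space of polynomials in $j$ of degree $<r$ (via a triangular change of basis between monomials and binomial coefficients), the $E$-spans of the sequences $((-j)^i)_{j=0}^{p-1}$ and $(\binom{j}{\ell})_{j=0}^{p-1}$ coincide, and equal $V_{r,1}$. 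Thus the $r$ elements $T([1,x^{r-i}y^i])$ span as an $E$-vector space exactly $\bigl\{\sum_j\lambda_j\smat{p&j\\0&1}[1,x^r]:\lambda\in V_{r,1}\bigr\}$, matching the first family in Lemma \ref{enghecke} (no $\B$-translation required).

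For the second family in Lemma \ref{noyr}, the $\G$-equivariance of $T$ and Lemma \ref{formule} with $i=r$ give
\[
T\bigl([\smat{p&i\\0&1},y^r]\bigr)=\smat{p&i\\0&1}\smat{1&0\\0&p}[1,y^r]+\sum_j(-j)^r\smat{p&i\\0&1}\smat{p&j\\0&1}[1,x^r].
\]
The matrix identity $\smat{p&i\\0&1}\smat{1&0\\0&p}=\smat{p&0\\0&p}\smat{1&i\\0&1}$, together with the fact that $\smat{p&0\\0&p}$ acts trivially on $\Sym^r E^2$ by the convention extending the $\KK$-action to $\KK\Z$, rewrites the first term as $[\smat{1&i\\0&1},y^r]$. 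The crux of the proof is then to evaluate $\sum_i\lambda_i[\smat{1&i\\0&1},y^r]$ at an arbitrary $k=\smat{a&b\\0&d}\in\K\Z$: it equals $\sum_i\lambda_i\bigl((ai+b)x+dy\bigr)^r$, and binomial expansion in $x$ and $y$ produces coefficients of the form $\sum_i\lambda_i(ai+b)^m$, where $(ai+b)^m$ is a polynomial in $i$ of degree $m$. For $m<r$ this sum vanishes because $\lambda\in V_{r,1}^\perp$, while for $m=r$ only the leading term $a^r i^r$ survives. The whole sum therefore collapses to $(\sum_i\lambda_i i^r)\cdot a^r x^r=(\sum_i\lambda_i i^r)\,[1,x^r](k)$, so $\sum_i\lambda_i[\smat{1&i\\0&1},y^r]=(\sum_i\lambda_i i^r)[1,x^r]$, and setting $\mu=\lambda$ identifies the resulting generator with the second family claimed in Lemma \ref{enghecke}.

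The main obstacle is the cancellation in the previous paragraph: one must exploit $\lambda\in V_{r,1}^\perp$ carefully to eliminate all mixed monomials in the binomial expansion of $((ai+b)x+dy)^r$, leaving only a scalar multiple of $[1,x^r]$. Everything else is a combination of Lemma \ref{noyr}, Lemma \ref{formule} and routine matrix algebra.
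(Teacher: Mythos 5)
Your proof is correct and follows the same route as the paper: reduce to Lemma~\ref{noyr}, compute $T$ on its two families of generators via Lemma~\ref{formule}, and simplify. The one place you add substance is the identity $\sum_i \mu_i [\smat{1&i\\0&1},y^r] = (\sum_i \mu_i i^r)[1,x^r]$ for $\mu\in V_{r,1}^\perp$, which the paper states without justification; your evaluation at an arbitrary $k=\smat{a&b\\0&d}\in\K\Z$, binomial expansion of $((ai+b)x+dy)^r$, and degree-counting argument (every coefficient except the one on $x^r$ is a polynomial of degree $<r$ in $i$, hence killed by $\mu$, and for $m=r$ only the leading term $a^r i^r$ survives) is exactly the verification the paper leaves to the reader, and it is correct.
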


\begin{proof}
Since $\ind_{\K\Z}^{\B} (1 \otimes 1)$ is generated by the $\B$-translates of $[1,1]$, the space $T(\ind_{\K\Z}^{\B} (1 \otimes 1))$ is generated by the $\B$-translates of $T([1,1]) = \smat{1 & 0\\0 & p}  [1,1] + \sum_{j =0}^{p-1} \smat{p & j \\ 0 & 1} [1,1]$ which proves the first part.

If $r \geq 1$, then lemma \ref{formule} tells us that $T([1,x^{r-i}y^i]) =  \sum_{j =0}^{p-1}  \smat{p & j \\ 0 & 1} [1, (-j)^i x^r]$ for $i \leq r-1$ and that
\[ T\left( \sum_{i=0}^{p-1} \mu_i [ \smat{p & i \\ 0 & 1} , y^r ]\right) = \sum_{i=0}^{p-1} \mu_i [\smat{1 & i \\ 0 & 1},y^r] + \sum_{i=0}^{p-1} \mu_i \smat{p & i \\ 0 & 1} \sum_{j =0}^{p-1} (-j)^r \smat{p & j \\ 0 & 1} [1,x^r]. \]
The condition $(\mu_0,\hdots,\mu_{p-1}) \in V_{r,1}^\perp$ implies that $\sum_{i=0}^{p-1} \mu_i [\smat{1 & i \\ 0 & 1},y^r] = \sum_{i=0}^{p-1} \mu_i i^r [1,x^r]$ and we are done by lemma \ref{noyr}.
\end{proof}

\begin{theo}\label{ssgisphl}
If $1 \leq h \leq p-1$, then we have an isomorphism of representations of $\B$
\[Ê\Pi_2(h,\sigma) \simeq \frac{\ind_{\KK\Z}^{\G} \Sym^{h-1} E^2}{T(\ind_{\KK\Z}^{\G} \Sym^{h-1} E^2)} \otimes (\chi \circ \det). \]
\end{theo}

\begin{proof}
First of all, we have
\[ (\ind_{\KK\Z}^{\G} \Sym^{h-1} E^2) / T \simeq (\ind_{\K\Z}^{\B} \Sym^{h-1} E^2 )/T \]
by lemma \ref{iwasawa}, so we work with the latter space. We can twist both sides by the inverse of $\chi \circ \det$ so that $\sigma = \omega^{h-1} \otimes 1$ by remark \ref{bze}. Given proposition \ref{isombor}, all we need to check is that if
\[ T(h,\sigma)  = T (\ind_{\K\Z}^{\B} \Sym^{h-1} E^2) \cap \ind_{\K\Z}^{\B} \sigma, \] 
then $T(h,\sigma)$ contains $S_2(h,\sigma)$. The space generated by the vectors $(\lambda_0,\hdots,\lambda_{p-1}) \in V_{h-1,1}$ and by $(0^{h-1},1^{h-1},\hdots,(p-1)^{h-1})$ is $V_{h,1}$ so that by lemma \ref{enghecke}, $T(h,\sigma)$ contains all of the elements
\[Ê\sum_{i=0}^{p-1} \mu_i i^{h-1} [\Id] + \sum_{i=0}^{p-1}  \sum_{j =0}^{p-1} \mu_i \nu_j \left[\pmat{p^2 & pj+i \\ 0 & 1} \right], \]
with $\mu \in V_{p-h+1,1}$ and $\nu \in (-1)^{h-1} (h-1)! v_{h-1,1} + V_{h-1,1}$. If we take $\mu_i = \binom{-i}{p-h}$ and $\nu_j = (h-1)! \binom{-j-1}{h-1}$, then the fact that
\[ \binom{-i}{p-h}\binom{-j-1}{h-1} = \binom{-pj-i}{p(h-1)+p-h} = \binom{-pj-i}{h(p-1)} \]
shows that $T(h,\sigma)$ contains $S_2(h,\sigma)$. 
\end{proof}

\section*{List of notations}

Here is a list of the main notations of the article, in the order in which they appear.

\begin{itemize}
\item[Introduction:] $\gal$; $\B$; $E$; $\K$; $\Z$; primitive $h$; $\inert$; $T_{\pm}$; $T$;
\item[\S \ref{linalg}:] $V_n$; $v_{k,n}$; $V_{k,n}$; $\Delta$; $\mu_a$; 
\item[\S \ref{twtr}:] $g_{\beta,\delta}$; $\sigma$; $\ind_{\K\Z}^{\B} \sigma$; $[g]$; $\alpha(\beta,\delta)$; support; level; $n$-block; initial $n$-block; $\I$; $\tau_k$;
\item[\S \ref{sirb}:] $w_{\ell,n}$; $\lambda$; $S_n(h,\sigma)$; $\Pi_n(h,\sigma)$; $i_k$; $h_k$; $\B^+$;
\item[\S \ref{fontaine}:] $\et^+$; $\et$; $\eps$; $X$; $\hal$; $\Gamma$; $\dfont(W)$; $\omega_n$; $\omega$; $\mu_\lambda$; $\ind(\omega_n^h)$; 
\item[\S \ref{colmez}:] $\psi$; $\Omega(W)$; $\theta$; 
\item[\S \ref{dimde}:] $T_{\pm}$; $T$;
\item[\S \ref{ssgresb}:] $\Sym^r(E^2)$.
\end{itemize}

\vspace{20pt}

\noindent\textbf{Acknowledgements:} It's a pleasure to thank Christophe Breuil for suggestions which resulted in this work, as well as for answering many of my subsequent questions. The ideas of Fontaine, Colmez and Breuil are the foundation for this work, and I am fortunate to have benefited from their guidance and insights throughout the years. Finally, I thank Michel Gros, Florian Herzig, Sandra Rozensztajn and Mathieu Vienney for their helpful comments on earlier versions, and the referee for his meticulous work which significantly improved the quality of this article.

\bibliographystyle{smfalpha}
\bibliography{borelmodp}

\providecommand{\bysame}{\leavevmode ---\ }
\providecommand{\og}{``}
\providecommand{\fg}{''}
\providecommand{\smfandname}{et}
\providecommand{\smfedsname}{\'eds.}
\providecommand{\smfedname}{\'ed.}
\providecommand{\smfmastersthesisname}{M\'emoire}
\providecommand{\smfphdthesisname}{Th\`ese}
\begin{thebibliography}{Bre03b}

\bibitem[Ber05]{LB5}
{\scshape L.~Berger} -- {\og Repr\'esentations modulaires de
  {$\mathrm{GL}_2(\mathbf{Q}_p)$} et repr\'esentations galoisiennes de
  dimension $2$\fg}, Ast\'erisque, to appear, 2005.

\bibitem[BL94]{BL2}
{\scshape L.~Barthel {\normalfont \smfandname} R.~Livn{\'e}} -- {\og
  Irreducible modular representations of {$\mathrm{GL}_2$} of a local
  field\fg}, \emph{Duke Math. J.} \textbf{75} (1994), no.~2, p.~261--292.

\bibitem[BL95]{BL}
\bysame , {\og Modular representations of {$\mathrm{GL}_2$} of a local field:
  the ordinary, unramified case\fg}, \emph{J. Number Theory} \textbf{55}
  (1995), no.~1, p.~1--27.

\bibitem[Bre03a]{BR1}
{\scshape C.~Breuil} -- {\og Sur quelques repr\'esentations modulaires et
  {$p$}-adiques de {$\mathrm{GL}_2(\mathbf{Q}_p)$}. {I}\fg}, \emph{Compositio
  Math.} \textbf{138} (2003), no.~2, p.~165--188.

\bibitem[Bre03b]{BR2}
\bysame , {\og Sur quelques repr\'esentations modulaires et {$p$}-adiques de
  {$\mathrm{GL}_2(\mathbf{Q}_p)$}. {II}\fg}, \emph{J. Inst. Math. Jussieu}
  \textbf{2} (2003), no.~1, p.~23--58.

\bibitem[Col07]{CPG}
{\scshape P.~Colmez} -- {\og {$(\varphi,\Gamma)$}-modules et repr\'esentations
  du mirabolique de {$\mathrm{GL}_2(\mathbf{Q}_p)$}\fg}, Ast\'erisque, to
  appear, 2007.

\bibitem[Col08]{CGL}
\bysame , {\og Repr\'esentations de {$\mathrm{GL}_2(\mathbf{Q}_p)$} et
  {$(\varphi,\Gamma)$}-modules\fg}, Ast\'erisque, to appear, 2008.

\bibitem[Eme08]{EM}
{\scshape M.~Emerton} -- {\og On a class of coherent rings, with applications
  to the smooth representation theory of {$\mathrm{GL}_2(\mathbf{Q}_p)$} in
  characteristic $p$\fg}, preprint, 2008.

\bibitem[Fon90]{F90}
{\scshape J.-M. Fontaine} -- {\og Repr\'esentations {$p$}-adiques des corps
  locaux. {I}\fg}, The {G}rothendieck {F}estschrift, {V}ol.\ {II}, Progr.
  Math., vol.~87, Birkh\"auser Boston, Boston, MA, 1990, p.~249--309.

\bibitem[Fon94]{F3}
\bysame , {\og Le corps des p\'eriodes {$p$}-adiques\fg}, \emph{Ast\'erisque}
  (1994), no.~223, p.~59--111, With an appendix by Pierre Colmez, P{\'e}riodes
  $p$-adiques (Bures-sur-Yvette, 1988).

\bibitem[FW79]{FW79}
{\scshape J.-M. Fontaine {\normalfont \smfandname} J.-P. Wintenberger} -- {\og
  Le ``corps des normes'' de certaines extensions alg\'ebriques de corps
  locaux\fg}, \emph{C. R. Acad. Sci. Paris S\'er. A-B} \textbf{288} (1979),
  no.~6, p.~A367--A370.

\bibitem[Pa{\v s}07]{P06}
{\scshape V.~Pa{\v s}k{\=u}nas} -- {\og On the restriction of representations
  of {$\mathrm{GL}_2(F)$} to a {B}orel subgroup\fg}, \emph{Compos. Math.}
  \textbf{143} (2007), no.~6, p.~1533--1544.

\bibitem[Ser72]{S72}
{\scshape J.-P. Serre} -- {\og Propri\'et\'es galoisiennes des points d'ordre
  fini des courbes elliptiques\fg}, \emph{Invent. Math.} \textbf{15} (1972),
  no.~4, p.~259--331.

\bibitem[SV08]{SV}
{\scshape P.~Schneider {\normalfont \smfandname} M.-F. Vign{\'e}ras} -- {\og A
  functor from smooth $o$-torsion representations to
  {$(\varphi,\Gamma)$}-modules\fg}, 2008.

\bibitem[Vig08]{V06}
{\scshape M.-F. Vign{\'e}ras} -- {\og S\'erie principale modulo {$p$} de
  groupes r\'eductifs {$p$}-adiques\fg}, \emph{Geom. Funct. Anal.} \textbf{17}
  (2008), no.~6, p.~2090--2112.

\bibitem[Win83]{W83}
{\scshape J.-P. Wintenberger} -- {\og Le corps des normes de certaines
  extensions infinies de corps locaux; applications\fg}, \emph{Ann. Sci.
  \'Ecole Norm. Sup. (4)} \textbf{16} (1983), no.~1, p.~59--89.

\end{thebibliography}
\end{document}